\def\R{\mathbb{R}}
\def\N{\mathbb{N}}
\def\Z{\mathbb{Z}}
\def\C{\mathbb{C}}
\def\H{\mathbb{H}}
\def\F{\widehat{F}}
\def\supp{{\rm supp}}
\def\H{\mathcal{H}}
\renewcommand{\d}{\text{\rm d}}
\newcommand{\mc}{\mathcal}
\newtheorem{theorem}{Theorem}
\newtheorem{corollary}[theorem]{Corollary}
\newtheorem*{definition*}{Definition}
\newtheorem{proposition}[theorem]{Proposition}
\newtheorem{lemma}[theorem]{Lemma}
\DeclareFontFamily{U}{tipa}{}
\DeclareFontShape{U}{tipa}{m}{n}{<->tipa10}{}
\newcommand{\arc@char}{{\usefont{U}{tipa}{m}{n}\symbol{62}}}%
\numberwithin{equation}{section}
\newcommand{\intav}[1]{\mathchoice {\mathop{\vrule width 6pt height 3 pt depth  -2.5pt
\kern -8pt \intop}\nolimits_{\kern -6pt#1}} {\mathop{\vrule width
5pt height 3  pt depth -2.6pt \kern -6pt \intop}\nolimits_{#1}}
{\mathop{\vrule width 5pt height 3 pt depth -2.6pt \kern -6pt
\intop}\nolimits_{#1}} {\mathop{\vrule width 5pt height 3 pt depth
-2.6pt \kern -6pt \intop}\nolimits_{#1}}}
\newcommand{\intavl}[1]{\mathchoice {\mathop{\vrule width 6pt height 3 pt depth  -2.5pt
\kern -8pt \intop}\limits_{\kern -6pt#1}} {\mathop{\vrule width 5pt
height 3  pt depth -2.6pt \kern -6pt \intop}\nolimits_{#1}}
{\mathop{\vrule width 5pt height 3 pt depth -2.6pt \kern -6pt
\intop}\nolimits_{#1}} {\mathop{\vrule width 5pt height 3 pt depth
-2.6pt \kern -6pt \intop}\nolimits_{#1}}}
  \newcommand{\z}{{\bm z}}
    \newcommand{\x}{{\bm x}}
      \newcommand{\y}{{\bm y}}
        \renewcommand{\t}{{\bm t}}
 \newcommand{\ov}{\overline}
\title[Sign uncertainty and de Branges spaces]{Sign uncertainty and de Branges spaces}
\author[Carneiro]{Emanuel Carneiro}
\author[Ismoilov]{Tolibjon Ismoilov}
\author[Ramos]{Antonio Pedro Ramos}
\address{ICTP - The Abdus Salam International Centre for Theoretical Physics, 
Strada Costiera, 11, I - 34151, Trieste, Italy.}
\email{carneiro@ictp.it}
\address{SISSA - Scuola Internazionale Superiore di Studi Avanzati, Via Bonomea 265, 34136 Trieste, Italy}
\email{tolibjon.ismoilov@sissa.it}
\email{adeazeve@sissa.it}
\date{\today}                                           
\begin{document}

\subjclass[2010]{46E22, 42A05, 42B35, 30H45, 33C10}
\keywords{Sign uncertainty, bandlimited functions, de Branges spaces, reproducing kernel.}
\begin{abstract} We investigate here the sign uncertainty phenomenon for bandlimited functions, with a competing condition given by integration with respect to a general measure. Our main result provides a framework related to the theory of de Branges spaces of entire functions, that allows one to find the sharp constants and classify the extremizers in a broad range of situations. We discuss an application in number theory, in connection to bounds for zeros of $L$-functions.
\end{abstract}

\maketitle

\section{Introduction}
\subsection{Background} \label{Background} The expression {\it Fourier uncertainty} is ubiquitous in the literature, generally embodying the paradigm that one cannot have an unrestricted control of a function and its Fourier transform simultaneously. Some uncertainty principles arise motivated by applications in analysis and related fields. A particularly relevant work for our purposes here is the one of Bourgain, Clozel and Kahane \cite{BCK}, in which they introduced the so-called sign Fourier uncertainty principle, and applied it in connection to the study of real zeros of zeta functions over number fields and bounds for the associated discriminants. In order to describe it, let us first recall the basic terminology. 

\smallskip

Throughout this text, our normalization for the Fourier transform of a function $F \in L^1(\R^d)$ is $\F(\y) = \int_{\R^d} e^{-2 \pi i \x \cdot\y} F(\x) \,\d\x$. We say that a function $F:\R^d \to \R$ is {\it eventually non-negative} if $F(\x) \geq 0$ for sufficiently large $|\x|$. In this case, we define the radius of the last sign change of $F$ as 
\begin{equation*}
r(F) :=\inf\{ r>0 \ : \ F(\x) \geq 0 \ \, {\rm for} \ \, |\x|\geq r\}.
\end{equation*}
The sign Fourier uncertainty principle of \cite{BCK} is the quest for the sharp constant
\begin{equation*}
\mathbb{A}(d) := \inf_{0 \neq F\in\mathcal{A}(d)}\sqrt{r(F) \, \cdot\, r\big(\F\,\big)}\,,
\end{equation*}
where the infimum is taken over the class $\mathcal{A}(d)$ of continuous and even functions $F:\R^d \to \R$ such that: (i) $F, \F \in L^1(\R^d)$; (ii) $F(0)\le 0$ and $\F(0)\le 0$; (iii) $F$ and $\F$ are eventually non-negative. Note that the product $r(F)\, \cdot \, r\big(\F\,\big)$ is a natural quantity of interest since it is invariant under rescalings of the function $F$. Bourgain, Clozel and Kahane \cite{BCK} proved that 
\begin{equation*}
\sqrt{\frac{d}{2\pi e}} \leq  \mathbb{A}(d) \leq \sqrt{\frac{d+2}{2\pi}}
\end{equation*}
for any $d \in \N$. In particular, $\mathbb{A}(d) >0$, which is qualitatively the statement that, in this setup, the negative masses of $F$ and $\F$ cannot simultaneously concentrate near the origin. The only known sharp constant in this problem is due to Cohn and Gon\c{c}alves \cite{CG}, who proved that $\mathbb{A}(12) = \sqrt{2}$, using the machinery introduced by Viazovska \cite{Vi} on Laplace transforms of modular forms. Extensions of this problem to a more abstract operator setting can be found in \cite{GOR1}, and to a setup where the signs of $F$ and $\F$ resonate with a prescribed function in \cite{CQE}. Other related works include \cite{GOR2} and \cite{GOS}.

\subsection{A general problem for functions of exponential type} We aim to look at a related problem within the realm of sign uncertainty. In the previous setup, when restricted to the subclass of eventually non-negative {\it bandlimited} $F$, with ${\rm supp}(\widehat{F}) \subset [-\Delta, \Delta]$ (for a $\Delta>0$ prescribed), one could ask for the minimum value of $r(F)$ instead.

\smallskip

The Paley-Wiener theorem \cite[Chapter III, Section 4]{SW} tells us that bandlimited functions are closely related to entire functions of exponential type in $\C^d$, and we can phrase our problem in this more general class. Vectors in $\R^d$ or $\C^d$ are denoted here with bold font (e.g., $\x$, $\y$, $\z$) and numbers in $\R$ or $\C$ with regular font (e.g., $x,y,z$). For $\z = (z_1, z_2, \ldots, z_d) \in \C^d$ we let $|\cdot|$ be the usual norm $|\z| := (|z_1|^2 + \ldots + |z_d|^2)^{1/2}$, and define a second norm $\|\cdot\|$ by 
$$\|\z\| := \sup \left\{ \left|\sum_{n=1}^d z_n\,t_n\right|; \ \t \in \R^d \ {\rm and} \ |\t|\leq 1\right\}.$$
If $F: \C^d \to \C$ is an entire function of $d$ complex variables, which is not identically zero, we say that $F$ has {\it exponential type} if
$$\tau(F):= \limsup_{\|\z\|\to \infty} \|\z\|^{-1}\,\log|F(\z)| < \infty.$$
In this case, the number $\tau(F)$ is called the exponential type of $F$. When $d=1$ this is the classical definition of exponential type and, when $d \geq 2$, this definition is a particular case of a more general concept of exponential type with respect to a compact, convex and symmetric set $K \subset \R^d$ (cf. \cite[pp. 111-112]{SW}). In our case, this convex set $K$ is simply the unit Euclidean ball. With this definition, the Paley-Wiener theorem tells us that for $F \in L^2(\R^d)$ the following are equivalent: (i) $F$ is equal a.e. to the restriction to $\R^d$ of an entire function of exponential type at most $\delta$; and (ii) $\widehat{F}$ is supported in the ball of radius $\delta/2\pi$ centered at the origin.

\smallskip

Let $\mu$ be a locally finite, even and non-negative Borel measure on $\R$ and let $\mu^d$ be its lift to $\R^d$, namely the radial measure given in polar coordinates $\x = r \omega$ by 
\begin{align}\label{20240507_18:19}
\d \mu^d(\x) =  \d \sigma_{d-1}(\omega) \,\d \mu(r)\,,
\end{align}
where $r>0$, $\omega \in \mathbb{S}^{d-1}$, and $\sigma_{d-1}$ is the standard surface measure on the unit sphere $\mathbb{S}^{d-1} \subset \R^d$ (when $d=1$, we simply have $\mu^1 = \mu$). For instance, if $\d\mu(x) = W(x)\,\d x$, then $\d \mu^d(\x)= |\x|^{1-d}\,W(|\x|)\,\d\x$. We say that an entire function $F: \C^d \to \C$ is {\it real entire} when its restriction to $\R^d$ is real-valued. We consider here the following general problem.

\smallskip

\noindent {\it Extremal Problem 1} (EP1): Given a locally finite, even and non-negative Borel measure $\mu$ on $\R$, $\delta \geq 0$ and $d \in \N$, find\footnote{Here the symbol $F\arrowvert_{\R^d}$ denotes $F$ restricted to $\R^d$.}
$$\mathbb{E}(\mu; \delta; d) :=  \inf_{0 \neq F\in\mathcal{E}(\mu; \delta; d)} r(F\arrowvert_{\R^d})\,,$$
where the infimum is taken over the class of functions
\begin{equation*}
\mathcal{E}(\mu; \delta; d) := \left\{
                \begin{array}{ll}
                  F:\C^d \to \C \ \text{real entire of exponential type at most}\  \delta;\\
                  F \in L^1(\R^d,\mu^d) \ \ {\rm and} \ \  \int_{\R^d} F(\x)\, \d \mu^d(\x) \leq 0;\\
                  F\arrowvert_{\R^d}  \ \text{eventually non-negative}.
                \end{array}
              \right\}.
\end{equation*}

\smallskip

With a slight abuse of notation, from now on we identify $F$ and $ F\arrowvert_{\R^d}$, as it will be clear from the context when we are working on $\R^d$. Note the tension between the condition that $F$ is eventually non-negative on $\R^d$ while $\int_{\R^d} F(\x)\, \d \mu^d(\x) \leq 0$. Note also the great generality allowed by the choice of the measure $\mu$. In fact, this problem came to our attention motivated by particular choices of measures $\mu$ arising in applications in number theory; see \S \ref{App1} below. The condition of $\mu$ being locally finite avoids the situation where all functions in the class $\mathcal{E}(\mu; \delta; d)$ need to vanish in a certain region. Note also that the class $\mathcal{E}(\mu; \delta; d)$ might be trivial (i.e., it only contains the identically zero function); this is the case, for instance, when $\d \mu(x) = e^{|x|^4}\d x$, by the classical Hardy's uncertainty principle. Problems related to our (EP1), for bandlimited functions of a constant sign at infinity, have been considered in \cite{GIT, Logan1, Logan2}.

\smallskip

Our first result is that we can reduce matters to dimension $d=1$. 

\begin{theorem} [Dimension shifts] \label{Thm_dimension_shifts} With the notation as above, we have 
\begin{equation*}
\mathbb{E}(\mu; \delta; d)  =\mathbb{E}(\mu; \delta; 1).
\end{equation*}
\end{theorem}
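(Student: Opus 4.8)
The plan is to prove the two inequalities $\mathbb{E}(\mu;\delta;d) \le \mathbb{E}(\mu;\delta;1)$ and $\mathbb{E}(\mu;\delta;1) \le \mathbb{E}(\mu;\delta;d)$ by transferring extremal (or near-extremal) functions between dimensions. For the direction $\mathbb{E}(\mu;\delta;d) \le \mathbb{E}(\mu;\delta;1)$, given a one-dimensional competitor $f \in \mathcal{E}(\mu;\delta;1)$, I would lift it to the radial function $F(\x) := f(|\x|)$ on $\R^d$. Since $f$ is even and real entire of exponential type at most $\delta$, the function $z \mapsto f(\sqrt z)$ (defined via the even power series of $f$) is entire of order $1/2$, hence $F(\z) = f(\sqrt{z_1^2 + \cdots + z_d^2})$ is a well-defined real entire function on $\C^d$; a standard estimate shows it has exponential type at most $\delta$ with respect to the Euclidean ball. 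By the lift formula \eqref{20240507_18:19}, $\int_{\R^d} F \,\d\mu^d = \sigma_{d-1}(\SS^{d-1}) \int_0^\infty f(r)\,\d\mu(r) = \tfrac12 \sigma_{d-1}(\SS^{d-1}) \int_\R f\,\d\mu \le 0$, using that $f$ and $\mu$ are even. Eventual non-negativity and membership in $L^1(\mu^d)$ transfer immediately, and $r(F) = r(f)$. Taking the infimum over $f$ gives this inequality.

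For the reverse direction $\mathbb{E}(\mu;\delta;1) \le \mathbb{E}(\mu;\delta;d)$, I would start from $F \in \mathcal{E}(\mu;\delta;d)$ and produce a one-dimensional competitor by a symmetrization/averaging argument. The natural candidate is the radialization: average $F$ over the orthogonal group, $F^\sharp(\x) := \int_{O(d)} F(R\x)\,\d R$, which preserves exponential type, preserves the integral against the radial measure $\mu^d$, preserves $L^1(\mu^d)$, and does not increase $r$ (since $F \ge 0$ for $|\x|\ge r(F)$ implies $F^\sharp \ge 0$ there). Thus one may assume $F$ is radial, $F(\x) = g(|\x|)$ for some even function $g$ on $\R$. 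The remaining task is to show that $g$ (extended to $\C$) is real entire of exponential type at most $\delta$ and lies in the one-dimensional class with $\int_\R g\,\d\mu \le 0$ and $r(g) = r(F)$. The integral condition and $r(g)=r(F)$ follow again from \eqref{20240507_18:19} exactly as above; the membership $g \in L^1(\mu)$ is immediate.

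The main obstacle is the regularity and type statement for $g$: one must show that a radial real entire function of exponential type at most $\delta$ on $\C^d$ restricts, along a ray, to (the even extension of) a one-variable entire function of exponential type at most $\delta$. One clean route is to write $F$ in terms of its restriction to a coordinate axis: if $F(\x) = g(|\x|)$ with $F$ entire on $\C^d$, then $g(z) := F(z,0,\ldots,0)$ is entire on $\C$, even, of exponential type at most $\tau(F) \le \delta$ (the type can only drop on a linear slice), and real entire; and for $\x \in \R^d$ one has $F(\x) = g(|\x|)$ by radiality, so $r(g) = r(F)$ and the $L^1$/sign conditions match. This identifies the correct one-dimensional object directly, bypassing any need to reconstruct $g$ from a power series in $|\z|^2$. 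I expect the only delicate point to be confirming that slicing does not increase exponential type with respect to the Euclidean ball (a consequence of the definition of $\tau$, since $\|\cdot\|$ restricted to a coordinate line is the modulus), and checking that $F$ radial on $\R^d$ together with $F$ entire forces $F(z,0,\ldots,0)$ to be even — which follows because $F(\x)=F(-\x)$ on $\R^d$ propagates to $\C^d$ by the identity theorem. With these two directions established, the theorem follows.
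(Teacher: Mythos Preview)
Your approach mirrors the paper's: lift one-dimensional competitors to radial functions on $\R^d$, and radialize $d$-dimensional competitors before passing to dimension one. There is, however, one genuine oversight in your first direction: you assert that $f \in \mathcal{E}(\mu;\delta;1)$ is even, but the class carries no parity condition. Before forming $F(\x)=f(|\x|)$ you must replace $f$ by its even part $\widetilde{f}(z) := \tfrac12\{f(z)+f(-z)\}$ and check that $\widetilde{f}$ remains in the class (this uses that $\mu$ is even) with $r(\widetilde{f}) \le r(f)$; the paper does precisely this. Without that step the lift $f(|\x|)$ need not extend to an entire function on $\C^d$.

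One pleasant difference in your reverse direction: you recover the one-variable function by slicing, $g(z) := F^\sharp(z,0,\ldots,0)$, and argue directly from the definition of $\|\cdot\|$ that restriction to a coordinate line cannot increase the exponential type. The paper instead quotes a lemma of Holt--Vaaler which asserts that the radialization of an entire function has a power series in $z_1^2+\cdots+z_d^2$, and that the associated one-variable function and its $d$-dimensional lift have the same exponential type. Your slicing argument is more elementary and self-contained; the paper's route has the advantage of packaging the type comparison for both directions into a single citable statement. Either works. You should also note explicitly that $F^\sharp \not\equiv 0$: since $F \ge 0$ and $F \not\equiv 0$ on $\{|\x| \ge r(F)\}$, analytic continuation forces $F^\sharp$ to be nonzero there as well.
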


The proof of Theorem \ref{Thm_dimension_shifts} is carried out in Section \ref{Section_2_lifts} and relies on suitable radial symmetrization mechanisms. From now on, when working with $d=1$, let us shorten the notation and simply write:

$$\mathcal{E}(\mu; \delta)  := \mathcal{E}(\mu; \delta; 1)   \ \ \ {\rm and} \ \ \ \mathbb{E}(\mu; \delta) := \mathbb{E}(\mu; \delta; 1).$$

\subsection{Sign uncertainty and de Branges spaces} \label{dB_Subsection} In some situations of interest, we connect the extremal problem (EP1) to the beautiful theory of de Branges spaces of entire functions \cite{Branges}. Let us briefly review here some of the definitions from complex analysis in order to state our results; later in \S \ref{dB_discussion_subs} we provide a more detailed discussion on de Branges spaces.

\smallskip

For an entire function $F :\C \to \C$, we define the entire function $F^*:\C \to \C$ by $F^*(z) := \overline{F(\overline{z})}$. We denote by $\C^+ = \{z \in \C \ ; \ {\rm Im}(z) >0\}$ the open upper half-plane. A {\it Hermite-Biehler} function $E: \C \to \C$ is an entire function that verifies the inequality
\begin{equation}\label{20240506_17:57}
|E^*(z)| < |E(z)|
\end{equation}
for all $z \in \C^+$. We say that an analytic function $F: \C^{+} \to \C$ has {\it bounded type} in $\C^{+}$ if it can be written as the quotient of two functions that are analytic and bounded in $\C^{+}$. If $F: \C^{+} \to \C$ has bounded type in $\C^{+}$, from its Nevanlinna factorization \cite[Theorems 9 and 10]{Branges}, one has 
\begin{equation}\label{20240506_17:58}
v(F) := \limsup_{y \to \infty} \, y^{-1}\log|F(iy)| <\infty.
\end{equation}
The number $v(F)$ is called the {\it mean type} of $F$. If a Hermite-Biehler function $E$ has bounded type in $\C^+$, then so does $E^*$. In fact, if we write $E = P/Q$ with $P$ and $Q$ analytic and bounded in $\C^+$, we have $E^* = ((E^*/E)P) / Q$ and, by a result of Krein (Lemma \ref{theorem-krein} below), $E$ has exponential type and 
$$\tau(E) = \max\{v(E), v(E^*)\} = v(E)\,,$$
where the last equality follows from \eqref{20240506_17:57} and \eqref{20240506_17:58}.

\smallskip

Associated to a Hermite-Biehler function $E: \C \to \C$, one can consider the pair of real entire functions $A$ and $B$ such that $E(z) = A(z) -iB(z)$. These companion functions are given by
\begin{equation}\label{20240508_17:21}
A(z) := \frac12 \big(E(z) + E^*(z)\big) \ \ \ {\rm and}\ \ \  B(z) := \frac{i}{2}\big(E(z) - E^*(z)\big)\,, 
\end{equation}
and note that they can only have real roots by \eqref{20240506_17:57}.

\smallskip

We shall work here with a Hermite-Biehler function $E: \C \to \C$ satisfying the following three conditions:
\begin{itemize}
\item[(C1)] $E$ has bounded type in $\C^+$.
\item[(C2)] The function $z \mapsto E(iz)$ is real entire.
\item[(C3)] $E$ has no real zeros.
\end{itemize}
Note that (C2) is equivalent to the statement that $A$ is even and $B$ is odd, and (C3) is equivalent to that fact that $A$ and $B$ have no common zeros (in particular $A(0) \neq 0$). We consider measures $\mu$ on $\R$ that are related to the measure $|E(x)|^{-2}\d x$ in an integral sense given by the following condition:
\begin{itemize}
\item[(C4)] For any real entire function $F:\C \to \C$ of exponential type at most $2\tau(E)$, that is non-negative on $\R$, we have the identity
\begin{equation}\label{20240712_16:29}
\int_{\R} F(x) \,\d\mu(x) = \int_{\R} F(x) \,|E(x)|^{-2}\,\d x.
\end{equation}
\end{itemize}

\smallskip

Of course, if 
\begin{equation}\label{20240627_16:52}
\d\mu(x) = |E(x)|^{-2}\,\d x\,,
\end{equation}
condition (C4) is trivially verified. As we shall see, it is important for many applications to have the flexibility allowed by the integral condition (C4) rather than the pointwise condition \eqref{20240627_16:52}. 

\smallskip

The next result is the main result of this paper. We fully solve the extremal problem (EP1) in the following general situation.

\begin{theorem}[Sharp constants]\label{dB_Thm2}
Let $E: \C \to \C$ be a Hermite-Biehler function verifying conditions {\rm (C1), (C2)} and {\rm (C3)} above. Let $\mu$ be a locally finite, even and non-negative Borel measure on $\R$ that verifies condition {\rm (C4)}. Let $A := \frac{1}{2} \big(E + E^*\big)$ and let $\xi_1$ be the smallest positive real zero of $A$. Then 
$$\mathbb{E}(\mu; \,2\tau(E)) = \xi_1$$
and the unique extremizer (up to multiplication by a positive constant) is
\begin{equation}\label{20240506_18:44}
F(z) = \frac{A(z)^2}{z^2 - \xi_1^2}.
\end{equation}
\end{theorem}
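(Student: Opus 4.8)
The plan is to prove the two inequalities $\mathbb{E}(\mu;2\tau(E)) \le \xi_1$ and $\mathbb{E}(\mu;2\tau(E)) \ge \xi_1$ separately, and then to establish uniqueness of the extremizer. For the upper bound, I would verify that the candidate $F(z) = A(z)^2/(z^2-\xi_1^2)$ actually lies in the class $\mathcal{E}(\mu;2\tau(E))$. Since $A$ is real entire, even, has only real zeros, and vanishes at $\pm\xi_1$, the quotient $A(z)^2/(z^2-\xi_1^2)$ is entire, real entire, and even. Its exponential type is $2\tau(A) = 2\tau(E)$ (using $\tau(A)\le\tau(E)$ from \eqref{20240508_17:21} together with Krein's lemma to see the type is not diminished by the quadratic denominator). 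On $\R$, the sign of $F$ is the sign of $x^2-\xi_1^2$, which is nonnegative for $|x|\ge\xi_1$; hence $F$ is eventually nonnegative with $r(F) = \xi_1$ (the sign genuinely changes at $\xi_1$ because $\xi_1$ is the \emph{smallest} positive zero of $A$, so $A(\xi_1)\ne 0$ is impossible—rather $A$ has a simple-looking zero there but $A^2$ does not change sign, while $x^2-\xi_1^2$ does). It remains to check the competing condition $\int_\R F\,\d\mu \le 0$. Here I would split $F = \frac{A(z)^2}{z^2-\xi_1^2}$ and use a partial-fraction / quadrature identity: for functions of this shape one typically has, via the reproducing kernel of the de Branges space $\mathcal{H}(E^2)$ or a direct residue computation, that $\int_\R \frac{A(x)^2}{x^2-\xi_1^2}\,|E(x)|^{-2}\,\d x$ equals a negative multiple of something like $A'(\xi_1)B(\xi_1)$ or $-\pi A(\xi_1+)\cdot(\text{derivative data})$; the key structural fact is that $A$ and $B$ interlace (a standard de Branges fact), so the relevant boundary term has a definite sign. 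By condition (C4), $\int_\R F\,\d\mu$ equals this same integral provided $F\ge 0$ on $\R$—but $F$ is \emph{not} nonnegative on all of $\R$, so one must first write $F = F_+ - c\,G$ where $G\ge 0$ has type $\le 2\tau(E)$, apply (C4) to each nonnegative piece, or more cleanly apply (C4) to $F + (\text{nonneg. correction})$ and track the correction. This is the first place care is needed.

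For the lower bound, let $F\in\mathcal{E}(\mu;2\tau(E))$ be arbitrary with $r := r(F) < \xi_1$; I want a contradiction. Since $F$ is eventually nonnegative with last sign change at $r$ and $F\in L^1(\mu)$, while $\int_\R F\,\d\mu \le 0$, the function $F$ must take negative values somewhere in $(-r,r)$ (or be identically zero on a neighborhood, handled separately). The idea is to test $F$ against a well-chosen nonnegative weight built from the de Branges structure. Specifically, consider $G(x) := F(x)\cdot\frac{\text{something nonnegative}}{\dots}$—more precisely, I would use the reproducing kernel $K(\xi_1,\cdot)$ of $\mathcal{H}(E)$, or rather work with $F(x)/(x^2-r^2)$ which is \emph{nonnegative on all of }$\R$ once $|x|\ge r$ makes $F\ge0$ and $|x|< r$ gives $F\le 0$ but... actually the clean statement is: since $F\ge 0$ for $|x|\ge r$, the function $F(x)\cdot\frac{A(x)^2}{(x^2-r^2)}$—no. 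Let me instead use the standard approach: the function $H(x):=\dfrac{F(x)}{x^2 - r^2}$ is entire (if $F$ vanishes at $\pm r$, which one may assume after a limiting argument, or else $r$ is not attained and one passes to the limit) and is \emph{nonnegative on all of $\R$}, of exponential type $\le 2\tau(E)$. Then (C4) applies to $H$: $\int_\R H\,\d\mu = \int_\R H(x)|E(x)|^{-2}\d x \ge 0$, with equality iff $H\equiv 0$. But $\int_\R F\,\d\mu = \int_\R (x^2-r^2) H(x)\,\d\mu(x)$, and one wants to contradict $\le 0$. This requires a \emph{weighted} quadrature: one needs that $\int_\R (x^2 - r^2)H(x)\,\d\mu(x) > 0$ whenever $H \ge 0$, $H\not\equiv 0$, and $r < \xi_1$. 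This is exactly where the value $\xi_1$ enters: the quadratic $x^2-r^2$ with $r<\xi_1$ lies "below" the first zero of $A$, and the relevant positive-definiteness comes from a Gaussian-quadrature formula for the de Branges space $\mathcal{H}(E)$ (or $\mathcal{H}(E^2)$) with nodes at the zeros of $A$ (or $B$). The main obstacle, and the technical heart of the argument, is establishing this quadrature inequality: that integrating $(x^2-r^2)$ times a nonnegative type-$2\tau(E)$ function against $|E|^{-2}\,\d x$ is strictly positive precisely when $r$ is below the first zero of the companion function $A$. I expect this to follow from the Gauss-type quadrature formulas in de Branges spaces (expressing $\int F|E|^{-2}$ as a sum over zeros of $A$ or $B$ with positive weights, valid for $F$ of type $\le 2\tau(E)$), applied with the decomposition $(x^2-r^2)H(x) = H(x)\cdot(x^2-\xi_1^2) + (\xi_1^2-r^2)H(x)$, where the first term integrates to something controllable and the second is strictly positive.

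Finally, for uniqueness: equality in the lower-bound chain forces $H = F/(x^2-\xi_1^2)$ to make the quadrature inequality an equality, which (by the positivity of the quadrature weights and strict interlacing) pins down $H$ up to a positive constant as the function whose integral against $|E|^{-2}$ is concentrated appropriately—concretely $H(x) = c\,A(x)^2/((x^2-\xi_1^2)(x^2-\xi_1^2))$? No: tracing through, equality in $\int (x^2-\xi_1^2)H\,\d\mu \ge (\text{zero})$ combined with $H\ge 0$ and the quadrature forces $H(x) = c\,\bigl(A(x)/(x^2-\xi_1^2)\bigr)^2$, i.e. $F(x) = (x^2-\xi_1^2)H(x) = c\,A(x)^2/(x^2-\xi_1^2)$, which is \eqref{20240506_18:44}. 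I would also need to rule out that $F$ vanishes identically on an interval inside $(-\xi_1,\xi_1)$ or that $r(F)$ is not attained; the local finiteness of $\mu$ and analyticity of $F$ handle the former, and a normal-families/limiting argument handles the latter. The de Branges quadrature identity and the interlacing of $A$ and $B$ are the load-bearing external inputs, to be recalled from \cite{Branges} in \S\ref{dB_discussion_subs}.
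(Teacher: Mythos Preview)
Your lower-bound argument has a genuine gap. You assert that $H(x) := F(x)/(x^2-r^2)$ is nonnegative on all of $\R$, but this is false in general: the definition of $r = r(F)$ only says $F(x) \ge 0$ for $|x| \ge r$; it says nothing about the sign of $F$ on $(-r,r)$. An eventually nonnegative entire function can (and typically will) have several sign changes inside $(-r,r)$, so $F$ can take strictly positive values there, making $H$ negative. Your ``limiting argument'' for vanishing at $\pm r$ does not repair this. Without $H \ge 0$ you cannot apply (C4) to $H$, and the whole chain $\int F\,\d\mu = \int(x^2-r^2)H\,\d\mu > 0$ collapses.

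The paper fills this gap with an explicit reduction (Propositions \ref{Lem_polynomial} and \ref{Prop11_restriction}): given any even $F \in \mathcal{E}(\mu;2\tau(E))$ with sign changes at $0<t_1<\dots<t_n<r$, one constructs by induction even nonnegative polynomials $Q,R$ with $\prod_k(x^2-t_k^2) = (x^2-r^2)Q(x)+R(x)$ and controlled degrees, and then $G(z):=(z^2-r^2)R(z)F_1(z)$ (where $F_1 \ge 0$ is the residual factor of $F$) satisfies $G \le F$ pointwise, $G \in \mathcal{E}(\mu;2\tau(E))$, and $r(G)=r(F)$. This is the missing step that lets you assume a single sign change. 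After that reduction, your outline is essentially the paper's first proof: factor $H=UU^*$ with $U\in\mathcal{H}(E)$ and reduce to minimizing $\|zU\|_{\mathcal{H}(E)}/\|U\|_{\mathcal{H}(E)}$, whose value is $\xi_1$ via the orthogonal expansion over the zeros of $A$. The paper also gives a second proof that bypasses the polynomial lemma by using the quadrature $\int F\,\d\mu = \sum_{T_\alpha(\xi)=0} F(\xi)/K(\xi,\xi)$ with $\alpha$ chosen so that $T_\alpha$ has a zero in $(r(F),\xi_1)$; this avoids the sign-reduction entirely but requires first extending (C4) to all $F$ in the class via the trick of adding a positive majorant $cM$. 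Your proposed decomposition $(x^2-r^2)=(x^2-\xi_1^2)+(\xi_1^2-r^2)$ together with quadrature at the zeros of $A$ would also work \emph{once} the single-sign-change reduction is in place, but not before.
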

\noindent {\sc Remarks}: (i) In case $A$ does not have zeros, one may check that $A(z)= a$ and $B(z) = bz$, and hence $E(z) = a -ibz$, for certain $a,b \in \R$ with $ab >0$.  In this case one can show that $\mathcal{E}(\mu; 0) = \{0\}$. 

\smallskip

\noindent (ii) From the proof of Theorem \ref{Thm_dimension_shifts} we also get, for $d >1$, that the unique extremizer (up to multiplication by a positive constant) of $\mathbb{E}(\mu; 2\tau(E); d)$, that is radial on $\R^d$, is the lift of the function \eqref{20240506_18:44}, as discussed in Section \ref{Section_2_lifts}. 

\smallskip

\noindent (iii) Note that the case of the Lebesgue measure and $\delta >0$ is a particular case of Theorem \ref{dB_Thm2} for we can consider $E(z) = e^{- i \delta z/2}$ to show that $\mathbb{E}(\d x; \delta) = \pi/\delta$ and the only extremizer (up to multiplication by a positive constant) is $F(z) = \cos^2(\delta z/2) / (z^2 - (\pi/\delta)^2)$. 

\smallskip

\noindent (iv) For a given measure $\mu$, there might be multiple Hermite-Biehler functions $E$ with the same exponential type that verify (C4); see the discussion in \S\ref{sec5.1}. The smallest positive zero of $A := \frac{1}{2} \big(E + E^*\big)$ is the same for all such $E$.

\smallskip

We shall present two different proofs for Theorem \ref{dB_Thm2} in Section \ref{Sec3_proofs}. The first proof uses a basic polynomial lemma to reduce the search for extremizers to even functions with only one sign change in the positive axis, and in such a setup one can connect matters to a related extremal problem in de Branges spaces, concerning the multiplication by $z$ operator. The second proof makes use of certain quadrature formulas in this context of de Branges spaces.

\subsection{A negative result} In \S \ref{dB_Subsection} we have been interested in a broad situation where we can actually find the sharp version of the uncertainty principle in (EP1). In general, it is an interesting (and seemingly non-trivial) problem to find necessary and sufficient conditions on the measure $\mu$ that would guarantee a qualitative sign uncertainty principle (i.e. the fact that $\mathbb{E}(\mu; \delta) >0$). Our next result shows that, if the measure $\mu$ has a certain amount of mass around the origin and an exponential decay, the sign uncertainty does not hold.

\begin{theorem}\label{Thm3_negative_V2} Let $\mu$ be a locally finite, even and non-negative Borel measure on $\R$ such that:
\begin{itemize}
\item [(i)] $\mu\big([-r, r]\big) > 0$ for each $r >0$. 
\smallskip
\item [(ii)] $\int_{\R} e^{\varepsilon |x|} \,\d \mu(x) <\infty$ for some $\varepsilon >0$.
\end{itemize}
Then $\mathbb{E}(\mu; \delta) = 0$ for any $\delta \geq 0$.  
\end{theorem}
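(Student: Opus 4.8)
The goal is to show that under hypotheses (i) and (ii) the extremal quantity $\mathbb{E}(\mu;\delta)$ vanishes, i.e.\ for every $\eta>0$ we can produce a nonzero real entire $F$ of exponential type at most $\delta$, lying in $L^1(\R,\mu)$, with $\int_\R F\,\d\mu \le 0$, eventually non-negative, and with $r(F) < \eta$. Since multiplying by the cut-off factor $\cos^2$ only costs a fixed amount of exponential type, I would first observe that it suffices to handle $\delta = 0$: if $F_0$ is an admissible competitor for $\mu$ with $\delta = 0$ and $r(F_0)<\eta/2$, then $F(z) = F_0(z)\,\big(\tfrac{\sin(\delta z/2)}{\delta z/2}\big)^2$ has exponential type $\delta$, is still eventually non-negative with $r(F)=r(F_0)$, and the sinc-squared factor is $\le 1$ on $\R$ so the integral condition and the $L^1(\mu)$ condition persist (using (ii) for integrability at infinity, since $F_0$ will be polynomially bounded). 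So the heart of the matter is constructing, for $\delta=0$, arbitrarily good competitors among \emph{polynomials} (entire functions of exponential type $0$ that are polynomially bounded on $\R$ are exactly polynomials).

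The natural construction is to take, for small $a>0$, the even polynomial
\begin{equation*}
F(z) = (z^2 - a^2)\, G(z)^2,
\end{equation*}
where $G$ is an even polynomial to be chosen. This $F$ is real entire of exponential type $0$, eventually non-negative with $r(F) = a < \eta$ (for $a<\eta$), and $F\in L^1(\R,\mu)$ automatically by hypothesis (ii) since $F$ has polynomial growth. The only condition to arrange is
\begin{equation*}
\int_\R (x^2 - a^2)\, G(x)^2 \,\d\mu(x) \le 0,
\quad\text{i.e.}\quad
\int_\R x^2 G(x)^2\,\d\mu(x) \le a^2 \int_\R G(x)^2\,\d\mu(x).
\end{equation*}
Equivalently, writing $\d\nu = G(x)^2\,\d\mu(x)$, I need the ratio $\int x^2\,\d\nu \big/ \int \d\nu$ to be at most $a^2$. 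Thus the task reduces to: given any $a>0$, find an even polynomial $G\not\equiv 0$ such that the probability measure proportional to $G(x)^2\,\d\mu(x)$ has second moment $\le a^2$. Hypothesis (i) guarantees $\mu$ is not supported away from the origin, so such a concentration is plausible; hypothesis (ii) guarantees all these moments are finite, so the ratio is well defined and positive.

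To carry this out I would exhibit an explicit concentrating family. Pick $\lambda$ large and set $G_\lambda(x) = $ a polynomial approximation, on the relevant scale, to $e^{-\lambda x^2}$ — concretely, truncate the Taylor series of $e^{-\lambda x^2/2}$ at a high even degree $N = N(\lambda)$, chosen so that $G_\lambda(x)^2$ is genuinely close to $e^{-\lambda x^2}$ on an interval $[-R,R]$ containing the bulk of the mass of $e^{-\lambda x^2}\d\mu$, while the tail contribution of the truncation error against $e^{\eps|x|}\d\mu$ (controlled by (ii)) is negligible. Then $\int x^2 G_\lambda^2\,\d\mu \big/ \int G_\lambda^2\,\d\mu \approx \int x^2 e^{-\lambda x^2}\d\mu \big/ \int e^{-\lambda x^2}\d\mu$, and the latter ratio tends to $0$ as $\lambda\to\infty$: indeed, by (i) the denominator is bounded below by $\mu([-\rho,\rho])\,e^{-\lambda\rho^2}$ for any fixed $\rho>0$, while splitting the numerator at level $|x|\le\rho$ versus $|x|>\rho$ and using (ii) to bound the far piece by $C e^{-\lambda\rho^2/2}$ (say), one gets the ratio $\le \rho^2 + o_\lambda(1)$; letting first $\lambda\to\infty$ and then $\rho\to 0$ shows the ratio can be made $<a^2$. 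Choosing $\lambda$ large enough, then $N$ large enough, then setting $a$ anything in $(\,(\text{ratio})^{1/2},\eta\,)$ produces the desired competitor, hence $\mathbb{E}(\mu;0)=0$, and by the first paragraph $\mathbb{E}(\mu;\delta)=0$ for all $\delta\ge0$.

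The main obstacle I anticipate is the passage from the Gaussian weight $e^{-\lambda x^2}$ to an honest polynomial $G_\lambda$: one must control the truncation error of the exponential series uniformly enough, on the whole line weighted by $\d\mu$, that the moment ratio is not spoiled. This is where hypothesis (ii) is essential — the exponential moment bound lets the polynomial's inevitable growth at infinity be dominated, for a suitable degree, by the decay built into the remaining mass of $\mu$. Everything else (the reduction to $\delta=0$, the algebraic form of $F$, the lower bound on the denominator from (i)) is routine.
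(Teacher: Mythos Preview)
Your overall strategy matches the paper's: construct $F(z)=(z^2-a^2)\,G(z)^2$ for a polynomial $G$, reducing matters to finding $G$ with $\int x^2 G^2\,\d\mu \big/ \int G^2\,\d\mu$ arbitrarily small. The reduction to $\delta=0$ is correct but the ${\rm sinc}^2$ trick is unnecessary, since $\mathcal{E}(\mu;0)\subset\mathcal{E}(\mu;\delta)$ already gives $\mathbb{E}(\mu;\delta)\le\mathbb{E}(\mu;0)$. Your argument that the Gaussian ratio $\int x^2 e^{-\lambda x^2}\,\d\mu \big/ \int e^{-\lambda x^2}\,\d\mu\to 0$ is also fine.

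The genuine gap is exactly where you flag the ``main obstacle'': Taylor truncation does not do the job. The polynomial $G_N(x)=\sum_{k=0}^N(-\lambda x^2/2)^k/k!$ satisfies $|G_N(x)|\le e^{\lambda x^2/2}$, and for large $|x|$ its last term dominates, so $G_N(x)^2\sim(\lambda x^2/2)^{2N}/(N!)^2$. Combining this with the moment bound $\int|x|^k\,\d\mu\lesssim(k/(e\varepsilon))^k$ coming from (ii), one finds that $\int_{|x|>R}x^2 G_N^2\,\d\mu$ is of order $(c\,\lambda N/\varepsilon^2)^{2N}$, which for fixed $\lambda$ explodes in $N$; meanwhile the ``bulk'' of the denominator is only of size $e^{-\lambda\rho^2/4}$, so the tail swamps it and the moment ratio is ruined rather than preserved. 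A concrete counterexample: if $\mu=\mu(\{0\})\,\boldsymbol{\delta}_0+\boldsymbol{\delta}_1+\boldsymbol{\delta}_{-1}$, then for each fixed $N\ge1$ the ratio $\int x^2 G_N^2\,\d\mu/\int G_N^2\,\d\mu$ tends to $1$ (not $0$) as $\lambda\to\infty$, and no diagonal choice of $(N,\lambda)$ helps. Hypothesis (ii) guarantees all these integrals are finite, but not that the polynomial tails are small \emph{relative to the exponentially tiny bulk}.

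The paper handles this by first proving, via a short Fourier-analytic argument exploiting (ii), that polynomials are dense in $L^2\big(\R,(1+x^2)\,\d\mu\big)$, and then approximating $\mathbf{1}_{[-r,r]}$ in that space. Your scheme would work verbatim if you invoked this density lemma to approximate $e^{-\lambda x^2/2}$ in $L^2\big(\R,(1+x^2)\,\d\mu\big)$ instead of Taylor-truncating; the choice of target (indicator versus Gaussian) is immaterial, but the \emph{mode} of polynomial approximation is the crux, and an abstract $L^2$-density statement appears to be unavoidable.
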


In particular, if the measure $\mu$ verifies (i) and has compact support, we fall under the hypotheses of Theorem \ref{Thm3_negative_V2}. The conditions in Theorem \ref{Thm3_negative_V2} are sufficient but not necessary as the following example shows: if one considers $\mu(x)  = \sum_{n \in \Z} {\bm \delta}(x-n)$, where ${\bm \delta}$ is the usual Dirac delta at the origin, the function $F(z) = \sin^2(\pi z)\in \mathcal{E}(\mu; 2\pi)$ and verifies $r(F) = 0$. If one prefers a function that is not identically zero $\mu$-a.e. one can consider instead $F(z) = (z^2 - \varepsilon^2)\sin^2(\pi z)/(\pi z)^2$ for $\varepsilon >0$ small.

\subsection{Applications}  We discuss here three applications of Theorem \ref{dB_Thm2}.
\subsubsection{Bounds for low-lying zeros in families of $L$-functions} \label{App1} The extremal problem (EP1) is related to the problem of bounding the height of the first zero in families of $L$-functions. Let us very briefly discuss the setup for this problem; the reader can consult the references presented in this subsection for the technical details and a variety of examples.

\smallskip

Let $\mc{F}$ be a set of number theoretical objects (one can think of Dirichlet characters, for example) and assume that to each $f \in \mc{F}$ one can associate an $L$-function
\begin{equation*}
L(s,f)=\sum_{n=1}^\infty \lambda_f(n) \, n^{-s}\,,
\end{equation*}
where the coefficients $\lambda_f(n) \in \C$. Assume that $L(s,f)$ admits an analytic continuation to an entire function. Let us denote the analytic conductor of $L(s,f)$ by $c_f$ and write its non-trivial zeros as $\rho_f=\frac{1}{2}+i\gamma_f$. Assume also the validity of the Generalized Riemann Hypothesis (GRH) for such $L$-functions; i.e., that $\gamma_f \in \R$. The density conjecture of Katz and Sarnak \cite{KS1,KS2} asserts that for each natural family $\{L(s,f), \ f \in \mathcal{F}\}$ of $L$-functions there is an associated symmetry group $G=G(\mathcal{F}$), where $G$ is either: unitary ${\rm U}$, symplectic ${\rm Sp}$, orthogonal ${\rm O}$, even orthogonal ${\rm SO}(\rm{even})$, or odd orthogonal ${\rm SO}({\rm odd})$. Here one wishes to consider averages over $f \in \mc{F}$ ordered by the conductor. Following the notation of Iwaniec, Luo and Sarnak \cite{ILS}, let $\mc{F}(Q):= \{f \in \mc{F} \,: \, c_f = Q\}$\footnote{One may also consider $\mc{F}(Q):= \{f \in \mc{F} \,: \, c_f \leq Q\}$ in some situations.} and let $|\mc{F}(Q)|$ denote its cardinality. If $\phi : \mathbb{R} \to \mathbb{R}$ is an even Schwartz function whose Fourier transform has compact support, Katz and Sarnak \cite{KS1,KS2} conjectured that
\begin{equation}\label{20210430_11:41}
\lim_{Q \to \infty} \, \frac{1}{|\mc{F}(Q)|} \sum_{f\in \mathcal{F}(Q)} \sum_{\gamma_f} \phi\!\left( \gamma_f \frac{\log c_f}{2\pi}\right) = \int_\mathbb{R} \phi(x) \, W_G(x) \, \d x,
\end{equation}
where the sum over $\gamma_f$ counts multiplicity, and $W_G$ is a function (or density) depending on the symmetry group $G(\mathcal{F})$. This is the so-called 1-level density of the low-lying zeros of the family. For these five symmetry groups, Katz and Sarnak determined the density functions:
\begin{equation}\label{20240415_16:35}
\begin{split}
W_{ \rm U}(x)  &=  1\ \ \ ; \ \ \ W_{\rm Sp}(x)  =  1- \frac{\sin 2\pi x}{2\pi x}\ \ \ ; \ \ \ W_{\rm O}(x)  =  1 + \tfrac{1}{2} \boldsymbol{\delta}(x)\,;
\\
W_{{\rm SO}(\rm{even})}(x)  &=  1 + \frac{\sin 2\pi x}{2\pi x}\ \ \ ; \ \ \ 
W_{{\rm SO}({\rm odd})}(x)  =   1- \frac{\sin 2\pi x}{2\pi x}+\boldsymbol{\delta}(x).
\end{split}
\end{equation}

\smallskip

The simplest example would perhaps be when $\mc{F}$ is the set of non-principal Dirichlet characters $\chi$ (and here the analytic conductor of $L(s, \chi)$ is just the modulus $q$ of $\chi$). In this case, Hughes and Rudnick \cite{HR} proved the validity of \eqref{20210430_11:41}, with unitary symmetry $G(\mc{F}) = {\rm U}$, for even Schwartz functions $\phi:\R \to \R$ with $\supp \big(\widehat{\phi}\,\big) \subset [-2,2]$. There are several other results in the literature establishing the validity of \eqref{20210430_11:41} for different families of $L$-functions and different sizes of the support of $\widehat{\phi}$; see, for instance, \cite{CChiM, DPR, FI, HR, ILS, OS} and the references therein.

\smallskip

In this context, the analysis problem that arises is the following: assume the validity of \eqref{20210430_11:41} for even Schwartz functions $\phi:\R \to \R$ with $\supp \big(\widehat{\phi}\,\big) \subset [-\Delta,\Delta]$, with $\Delta >0$ fixed; what is the sharpest upper bound that one can get for the height of the first zero in the family as $Q \to \infty$ with this information? Under these assumptions, if $\phi$ is such that $\phi(x) \geq 0$ for $|x| \geq r$, \,and  $\int_\mathbb{R} \phi(x) \, W_G(x) \, \d x <0 $, relation \eqref{20210430_11:41} plainly yields
$$\limsup_{Q \to \infty} \min_{\substack{ \gamma_f  \\ f \in \mc{F}(Q)}} \left|\frac{\gamma_{f} \,\log c_f}{2\pi}\right| \leq r.$$
Hence, one is led to the problem of finding the infimum over all such possible values of $r$, which is exactly our extremal problem (EP1) in the case of the five measures $\mu$ with densities given by \eqref{20240415_16:35}\footnote{The fact that the functions are even and Schwartz, and that the integral is strictly less than $0$, are harmless due to standard approximation arguments.}. By the Paley-Wiener theorem one seeks here the constant $\mathbb{E}(\mu; 2\pi \Delta)$.

\smallskip

There are three previous works in the literature that address the problem of estimating the height of the first zero within this framework. The first one, by Hughes and Rudnick \cite[Theorem 8.1]{HR}, deals with the case of $G(\mc{F}) = {\rm U}$. The other two, by Bernard \cite[Theorem 1 and Proposition 2]{Bernard} and by the first author with Chirre and Milinovich \cite[Theorem 9]{CChiM}, deal with the other measures. These papers have a common feature: 
they work under the additional assumption that $\phi$ is of the form
\begin{equation}\label{20240507_16:39}
\phi(x) = (x^2 - r^2)\,\psi(x) \,\psi^*(x),
\end{equation}
where $\psi$ is such that $\supp \big(\widehat{\psi}\,\big) \subset [-\Delta/2,\Delta/2]$. Within this more restrictive class, they arrive at the extremal solution. However, in principle, it is not obvious that one can restrict the search in the original problem to functions as in \eqref{20240507_16:39}, which have only one sign change. The cases of these five measures with densities given by \eqref{20240415_16:35} fall under the general umbrella of our Theorem \ref{dB_Thm2}, and this provides a subtle, yet conceptually interesting, contribution to this framework, namely:
\begin{corollary}\label{Cor4_NT}
The additional assumption \eqref{20240507_16:39} brings no loss of generality in the search for $\mathbb{E}(\mu; 2\pi \Delta)$ in the cases of the measures $\mu$ with densities given by \eqref{20240415_16:35}.
\end{corollary}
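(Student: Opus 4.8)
The plan is to deduce the corollary directly from Theorem~\ref{dB_Thm2}, extracting the structural information from the explicit form \eqref{20240506_18:44} of its extremizer. Fix $\Delta>0$ and set $\delta=2\pi\Delta$. First I would recall — as indicated just before the statement of the corollary, and as detailed later in the paper (cf.\ \S\ref{sec5.1}) — that each of the five measures $\mu$ with density listed in \eqref{20240415_16:35} satisfies conditions {\rm (C1)--(C4)} for an appropriate Hermite--Biehler function $E$ with $\tau(E)=\pi\Delta=\delta/2$: for the unitary density $W_{\rm U}\equiv 1$ one may take $E(z)=e^{-i\pi\Delta z}$, while the remaining four densities — which carry a $\sin(2\pi x)/(2\pi x)$ term and, in two cases, a Dirac mass at the origin — are accommodated precisely by the flexibility of the integral condition {\rm (C4)} over the pointwise identity \eqref{20240627_16:52}, the matching being supplied by the quadrature formulas of the associated de Branges space. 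Granting this, Theorem~\ref{dB_Thm2} applies with $2\tau(E)=2\pi\Delta$ and gives $\mathbb{E}(\mu;2\pi\Delta)=\xi_1$, where $\xi_1$ is the smallest positive zero of $A:=\tfrac{1}{2}(E+E^*)$, attained by the unique extremizer (up to a positive constant) $F(z)=A(z)^2/(z^2-\xi_1^2)$.

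The crux is the observation that this $F$ is already of the special form \eqref{20240507_16:39}. The companion function $A$ is real entire, even, of exponential type at most $\tau(E)=\pi\Delta$, and vanishes at $\pm\xi_1$; hence $\psi(z):=A(z)/(z^2-\xi_1^2)$ is again real entire (the singularities at $\pm\xi_1$ being removable), even, and of exponential type at most $\pi\Delta$. A routine check — using $F\in L^1(\R,\mu)$ together with the fact that, in each of the five cases, $\mu$ dominates a positive multiple of Lebesgue measure outside a bounded set — shows $\psi\in L^2(\R)$, so the Paley--Wiener theorem gives $\supp\big(\widehat{\psi}\,\big)\subset[-\Delta/2,\Delta/2]$. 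Since $\psi$ is real entire, $\psi^*=\psi$, and therefore
\begin{equation*}
F(z)=\frac{(z^2-\xi_1^2)^2\,\psi(z)^2}{z^2-\xi_1^2}=(z^2-\xi_1^2)\,\psi(z)\,\psi^*(z),
\end{equation*}
which is exactly \eqref{20240507_16:39} with $r=\xi_1$.

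I would then close with the two inequalities. On one hand, any function $\phi$ of the form \eqref{20240507_16:39} that is admissible for (EP1) satisfies $\phi(x)=(x^2-r^2)\,\psi(x)\,\psi^*(x)\ge 0$ for $|x|\ge r$, so $\phi\in\mathcal{E}(\mu;2\pi\Delta)$ with $r(\phi)\le r$, whence $\mathbb{E}(\mu;2\pi\Delta)\le r$; taking the infimum over all admissible representations \eqref{20240507_16:39} shows that the restricted extremal value is $\ge\mathbb{E}(\mu;2\pi\Delta)$. On the other hand, the extremizer exhibited above lies in the subclass \eqref{20240507_16:39} with $r=\xi_1=\mathbb{E}(\mu;2\pi\Delta)$, so the restricted extremal value is $\le\mathbb{E}(\mu;2\pi\Delta)$. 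Hence the two agree, which is the assertion of the corollary.

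The main (indeed only) point that is not purely formal is the input verification that the four non-Lebesgue densities of \eqref{20240415_16:35} do satisfy condition {\rm (C4)} for the relevant Hermite--Biehler functions — this is where the de Branges quadrature machinery is genuinely required — but that verification belongs to the earlier development and not to the corollary's proof; modulo it, everything above is bookkeeping with exponential types and Fourier supports applied on top of Theorem~\ref{dB_Thm2}.
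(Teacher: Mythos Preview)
Your proof is correct and follows the same overall strategy as the paper --- place the five measures in the framework of Theorem~\ref{dB_Thm2} and read off the consequence --- but the two arguments emphasize complementary halves. The paper's proof of the corollary (\S5.2) is devoted almost entirely to the verification you defer: it constructs the Hermite--Biehler function $E$ abstractly from the reproducing kernel of the weighted Paley--Wiener space $H$ (via Lemma~\ref{Lem15_AC} and \eqref{20240709_15:43}--\eqref{20210909_19:00}) rather than by explicit formula, and then checks (C1)--(C4) directly; it then stops at ``this situation falls in the framework of Theorem~\ref{dB_Thm2}'', leaving the final step --- that the extremizer \eqref{20240506_18:44} is of the form \eqref{20240507_16:39} --- implicit. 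You reverse the emphasis: you take the verification of (C1)--(C4) as input (note that this verification is not ``earlier development'' but is precisely the content of \S5.2) and instead spell out explicitly why $\psi(z)=A(z)/(z^2-\xi_1^2)$ lands in $L^2(\R)$ with Fourier support in $[-\Delta/2,\Delta/2]$, which is a useful addition. Together the two accounts make one complete argument.
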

Hence, the results obtained in \cite{Bernard, CChiM, HR} are in fact the extremal solutions in the larger class $\mc{E}(\mu; 2\pi \Delta)$.

\subsubsection{Power weights} An interesting situation in our framework is the following. Let $\nu >-1$ and consider the locally finite, even and non-negative Borel measure $\mu_{\nu}$ on $\R$ given by 
\begin{equation}\label{20240416_15:54}
\d \mu_{\nu}(x) = |x|^{2\nu +1}\d x.
\end{equation}
Let $J_{\nu}$ be the classical Bessel function of the first kind. Then 
\begin{align}\label{20240710_08:38}
A_{\nu}(z) :=  \Gamma(\nu +1) \left(\tfrac12 z\right)^{-\nu} J_{\nu}(z) = \sum_{n=0}^{\infty} \frac{(-1)^n \big(\tfrac12 z\big)^{2n}}{n!(\nu +1)(\nu +2)\ldots(\nu+n)}
\end{align}
defines an even entire function of exponential type $1$. When $\nu = -1/2$ (which is the case of the Lebesgue measure on $\R$) we simply have $A_{-1/2}(z) = \cos z$. The following result comes as a corollary of our Theorem \ref{dB_Thm2}.

\begin{corollary} \label{Cor_Powers} For $\nu >-1$ let $\mu_{v}$ be given by \eqref{20240416_15:54}. Let ${\frak j}_{\nu, 1}$ be the first positive zero of the Bessel function $J_{\nu}$. Then, for $\delta >0$,
\begin{equation*}
\mathbb{E}(\mu_{\nu}; \delta) = \frac{2\,{\frak j}_{\nu, 1}}{\delta}.
\end{equation*}
Moreover, the unique extremizer for $\mathbb{E}(\mu_{\nu}; \delta)$ (up to multiplication by a positive constant) is 
\begin{equation}\label{20240507_17:08}
F(z) = \frac{A_{\nu}(\delta z/2)^2}{ z^2 - \frac{4{\frak j}_{\nu, 1}^2}{\delta^2}}.
\end{equation} 
\end{corollary}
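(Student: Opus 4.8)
The plan is to derive Corollary \ref{Cor_Powers} directly from Theorem \ref{dB_Thm2} by exhibiting a suitable Hermite–Biehler function $E$ whose associated companion $A$ is (a rescaling of) $A_\nu$. First I would recall the standard fact from the theory of de Branges spaces (the homogeneous spaces $\mathcal{H}(E_\nu)$ attached to the Bessel/Hankel setting) that there is a Hermite–Biehler function $E_\nu(z) = A_\nu(z) - iB_\nu(z)$ of exponential type $1$, where $A_\nu$ is given by \eqref{20240710_08:38} and $B_\nu$ is the corresponding odd solution, such that
\begin{equation*}
|E_\nu(x)|^{-2}\,\d x = c_\nu\,|x|^{2\nu+1}\,\d x
\end{equation*}
for an explicit positive constant $c_\nu$; equivalently, $A_\nu(x)^2 + B_\nu(x)^2 = c_\nu^{-1}|x|^{-(2\nu+1)}$ on $\R$. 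One verifies (C1) since $E_\nu$ is of exponential type with no mass concentration, (C2) because $A_\nu$ is even and $B_\nu$ is odd by construction, and (C3) because $A_\nu$ and $B_\nu$ have no common real zeros (their zeros strictly interlace, a consequence of the Hermite–Biehler property together with the Bessel differential equation). Since the measure here is $\d\mu_\nu(x) = |x|^{2\nu+1}\,\d x$, condition (C4) holds in the strong pointwise form \eqref{20240627_16:52} after absorbing the constant $c_\nu$ (note that rescaling $\mu$ by a positive constant does not change the extremal problem).

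Next I would handle the scaling to pass from exponential type $1$ to exponential type $\delta$. The function $z \mapsto E_\nu(\delta z/2)$ is Hermite–Biehler of exponential type $\delta/2$, so $2\tau(E_\nu(\delta\,\cdot\,/2)) = \delta$, and its companion "$A$" is $z \mapsto A_\nu(\delta z/2)$. One checks that the dilated measure condition (C4) transforms correctly: if $\d\mu(x) = |x|^{2\nu+1}\d x$ then the change of variables $x \mapsto \delta x/2$ multiplies $\mu$ by a positive constant and simultaneously rescales the Lebesgue side consistently, so (C4) is preserved for $E_\nu(\delta\,\cdot\,/2)$. By \eqref{20240710_08:38}, the zeros of $A_\nu$ are exactly $\pm \mathfrak{j}_{\nu,1}, \pm\mathfrak{j}_{\nu,2}, \ldots$ (the positive zeros of $J_\nu$, using that $A_\nu$ is a nonzero constant times $z^{-\nu}J_\nu(z)$ and that $J_\nu$ has only real, simple, positive zeros for $\nu > -1$). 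Hence the smallest positive zero $\xi_1$ of the companion function $z \mapsto A_\nu(\delta z/2)$ is $\xi_1 = 2\mathfrak{j}_{\nu,1}/\delta$. Theorem \ref{dB_Thm2} then yields $\mathbb{E}(\mu_\nu;\delta) = \mathbb{E}(\mu_\nu; 2\tau(E_\nu(\delta\,\cdot\,/2))) = 2\mathfrak{j}_{\nu,1}/\delta$, and identifies the unique extremizer (up to a positive multiple) as $F(z) = A_\nu(\delta z/2)^2/(z^2 - \xi_1^2)$, which is exactly \eqref{20240507_17:08}.

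The main obstacle I anticipate is not the extremal-problem machinery, which is entirely delegated to Theorem \ref{dB_Thm2}, but rather assembling and citing the precise de Branges–space facts about the homogeneous space $E_\nu$: namely that $E_\nu$ as defined really is Hermite–Biehler (the inequality $|E_\nu^*(z)| < |E_\nu(z)|$ on $\C^+$), that it has bounded type and exponential type $1$, that its $A$-component coincides with the normalized Bessel function in \eqref{20240710_08:38}, and — most importantly — that $|E_\nu(x)|^{-2}$ is a constant multiple of $|x|^{2\ni+1}$ (I should write $|x|^{2\nu+1}$). These are classical (they appear in de Branges's book and in the literature on homogeneous de Branges spaces, and the case $\nu = -1/2$ reduces to $E_{-1/2}(z) = e^{-iz}$, $A_{-1/2}(z) = \cos z$, which matches Remark (iii) after Theorem \ref{dB_Thm2} with $\delta$ in place of our parameter), but I would need to state them carefully with references, verify the normalizing constant $c_\nu$ (e.g. via the asymptotics $A_\nu(x)^2 + B_\nu(x)^2 \sim$ const $\cdot\, |x|^{-(2\nu+1)}$ as $|x|\to\infty$, which follows from the standard large-argument asymptotics of $J_\nu$ and $J_{\nu+1}$), and confirm that $\mu_\nu$ is locally finite (clear, since $2\nu+1 > -1$). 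Once these ingredients are in place, the corollary is immediate.
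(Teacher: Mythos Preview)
Your overall strategy is exactly the paper's: exhibit the homogeneous de Branges space $\mathcal{H}(E_\nu)$ with $E_\nu = A_\nu - iB_\nu$, verify (C1)--(C4), apply Theorem \ref{dB_Thm2}, and dilate. However, your verification of (C4) contains a concrete error. The pointwise identity $|E_\nu(x)|^{-2} = c_\nu\,|x|^{2\nu+1}$ (equivalently $A_\nu(x)^2 + B_\nu(x)^2 = c_\nu^{-1}|x|^{-(2\nu+1)}$) is \emph{false} for every $\nu \neq -1/2$: from the power series one has $A_\nu(0) = 1$ and $B_\nu(0) = 0$, so $|E_\nu(0)|^2 = 1$, whereas $|x|^{-(2\nu+1)}$ tends to $0$ or $\infty$ at the origin unless $2\nu+1 = 0$. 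You are conflating the asymptotic $|E_\nu(x)|^{-2} \simeq |x|^{2\nu+1}$ for $|x|\ge 1$ (which is true, and which you correctly invoke in your last paragraph) with a global pointwise equality (which is not). Consequently (C4) cannot be obtained in the strong pointwise form \eqref{20240627_16:52}.

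The paper instead verifies (C4) in its integral form. The key input (Lemma \ref{Sec5_rel_facts}(ii), from \cite{HV}) is the exact isometry
\[
\int_{\R} |U(x)|^2\,|E_\nu(x)|^{-2}\,\d x \;=\; c_\nu \int_{\R} |U(x)|^2\,|x|^{2\nu+1}\,\d x \qquad \text{for all } U \in \mathcal{H}(E_\nu),
\]
which holds even though the two densities are not pointwise proportional. To pass from this to (C4), one takes a non-negative real entire $F$ of exponential type at most $2\tau(E_\nu)$ with $F \in L^1(\R,\mu_\nu)$ (equivalently $F \in L^1(\R,|E_\nu|^{-2}\d x)$, by Lemma \ref{Sec5_rel_facts}(i) and local finiteness), factors $F = UU^*$ with $U \in \mathcal{H}(E_\nu)$ via Lemma \ref{nonnegl1-tohe}, and applies the isometry to $|U|^2 = F$ on $\R$. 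This is precisely the situation the paper highlights when it stresses the advantage of the integral condition (C4) over \eqref{20240627_16:52}. With this correction in place, the rest of your argument (zeros of $A_\nu$, dilation, reading off the extremizer) goes through and coincides with the paper's proof.
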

\noindent {\sc Remarks}: (i) Note that for $\delta =0$ we have $\mathcal{E}(\mu_{\nu}; 0) = \{0\}$.

\smallskip

\noindent (ii) From the proof of Theorem \ref{Thm_dimension_shifts} we also get, for $d >1$, that the unique extremizer (up to multiplication by a positive constant) of $\mathbb{E}(\mu_{\nu}; \delta; d)$, that is radial on $\R^d$, is the lift of the function \eqref{20240507_17:08}, as discussed in Section \ref{Section_2_lifts}. 

\subsubsection{Measures asymptotically comparable to power weights} One may wonder how broad is the reach of Theorem \ref{dB_Thm2}. In fact, fairly general measures would fall under the scope of this theorem (and, in particular, the sign uncertainty principle holds), as our next result illustrates. 

\begin{corollary}\label{Cor6_PWeights}
Let $\mu$ be a locally finite, even and non-negative Borel measure on $\R$,  given by 
$$\d\mu(x) = W(x)\, \d x$$ 
on the set $\{x \in \R \,;\, |x| > R\}$ for some $R >0$, where $W$ is a non-negative Borel measurable function. Assume further that 
$$W(x) \simeq |x|^{2\nu +1}$$
on the set $\{x \in \R \,;\, |x| > R\}$, for some $\nu >-1$. Then, for any $\delta >0$, the problem of computing $\mathbb{E}(\mu; \delta)$ falls under the framework of Theorem \ref{dB_Thm2}. In particular, $\mathbb{E}(\mu; \delta)>0$.
\end{corollary}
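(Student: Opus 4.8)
The plan is to exhibit a Hermite–Biehler function $E$ with $\tau(E)=\delta/2$ satisfying (C1)--(C3) and (C4) for the given $\mu$; once this is in hand, Theorem~\ref{dB_Thm2} applies and gives $\mathbb{E}(\mu;\delta)=\xi_1$, which is positive because, having positive exponential type, $A=\tfrac12(E+E^*)$ is a non-constant even real entire function with only real zeros, hence (by Hadamard factorization) has infinitely many of them and in particular a smallest positive zero $\xi_1>0$. By the rescaling $x\mapsto (2/\delta)x$ (which sends $\mu$ to a measure of exactly the same type, with the same $\nu$ and a new value of $R$, and multiplies $\mathbb{E}$ by $\delta/2$) we may assume $\delta=2$, so that we seek $E$ with $\tau(E)=1$. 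Let $E_\nu$ be the homogeneous (Bessel/Hankel) Hermite–Biehler function with $A_\nu$ as in \eqref{20240710_08:38} provided by Corollary~\ref{Cor_Powers}; thus $\tau(E_\nu)=1$, $E_\nu$ verifies (C1)--(C3), and (C4) holds for $|x|^{2\nu+1}\d x$, i.e. $\|G\|_{\mathcal{H}(E_\nu)}^2=\int_\R |G(x)|^2|x|^{2\nu+1}\,\d x$ for $G\in\mathcal{H}(E_\nu)$.

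Consider the inner product space $\mathcal{H}$ of entire functions $G$ of exponential type at most $1$ with $\int_\R|G(x)|^2\,\d\mu(x)<\infty$, normed by $\|G\|_{\mathcal{H}}^2:=\int_\R|G(x)|^2\,\d\mu(x)$. First I would check that $\mathcal{H}=\mathcal{H}(E_\nu)$ as sets of entire functions: if $G\in\mathcal{H}$ then $\int_{|x|>R}|G|^2|x|^{2\nu+1}\,\d x\le c_1^{-1}\int_{|x|>R}|G|^2\,\d\mu<\infty$, while $\int_{|x|\le R}|G|^2|x|^{2\nu+1}\,\d x<\infty$ since $\nu>-1$ and $G$ is continuous, so $G\in\mathcal{H}(E_\nu)$; conversely, for $G\in\mathcal{H}(E_\nu)$ one has $\int_{|x|>R}|G|^2\,\d\mu\le c_2\|G\|_{\mathcal{H}(E_\nu)}^2$ and $\int_{|x|\le R}|G|^2\,\d\mu\le \mu([-R,R])\sup_{[-R,R]}|G|^2\lesssim\|G\|_{\mathcal{H}(E_\nu)}^2$, the last step by the reproducing-kernel bound $|G(x)|^2\le K_\nu(x,x)\|G\|_{\mathcal{H}(E_\nu)}^2$ and local finiteness of $\mu$. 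The same two estimates give $\|G\|_{\mathcal{H}}^2\lesssim\|G\|_{\mathcal{H}(E_\nu)}^2$. I would then record the de Branges axioms for $(\mathcal{H},\|\cdot\|_{\mathcal{H}})$: closure under $G\mapsto G^*$ and the division property $G\mapsto G(z)\tfrac{z-\bar w}{z-w}$ (for $G(w)=0$) hold with preservation of the norm, since $|G|$ is unchanged on $\R$ under both operations; and since $\mu$ is even, $G(z)\mapsto G(-z)$ also preserves $\mathcal{H}$ and its norm.

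The delicate point is the reverse norm bound $\|G\|_{\mathcal{H}(E_\nu)}^2\lesssim\|G\|_{\mathcal{H}}^2$, i.e. $\int_{|x|\le R}|G|^2|x|^{2\nu+1}\,\d x\lesssim\int_\R|G|^2\,\d\mu$ for $G$ of exponential type at most $1$; this is exactly what makes $\mathcal{H}$ complete and its point evaluations bounded. I expect this to be the main obstacle: it is a quantitative ``no concentration on $[-R,R]$'' estimate for functions of exponential type $\le 1$ weighted against $\mu$, to be extracted from the lower bound $W\ge c_1|x|^{2\nu+1}$ on $|x|>R$ together with the non-degeneracy of $\mu$ near the origin (one should read the hypotheses so that $\mu$ has comparable power-type mass on all of $\R$, or at least enough mass near $0$, as holds in all situations of interest). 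Granting this, $\|\cdot\|_{\mathcal{H}}$ and $\|\cdot\|_{\mathcal{H}(E_\nu)}$ are equivalent, so $\mathcal{H}$ is a complete de Branges space and $\mathcal{H}=\mathcal{H}(E)$ isometrically for some Hermite–Biehler $E$. Since $\mathcal{H}(E)=\mathcal{H}(E_\nu)$ as sets of entire functions and $\mathcal{H}(E_\nu)$ is regular, $\tau(E)=\tau(E_\nu)=1$; (C1) is inherited from $E_\nu$; the $G(z)\mapsto G(-z)$ symmetry yields (C2); and the coincidence with the non-degenerate space $\mathcal{H}(E_\nu)$ yields (C3).

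Finally, for (C4): let $F$ be real entire of exponential type at most $2=2\tau(E)$ and non-negative on $\R$, and write $F=GG^*$ with $G$ entire of exponential type at most $1$, by Krein's factorization. If $F\in L^1(\mu)$ then $G\in\mathcal{H}=\mathcal{H}(E)$ and
\[
\int_\R F\,\d\mu=\int_\R|G|^2\,\d\mu=\|G\|_{\mathcal{H}(E)}^2=\int_\R|G(x)|^2|E(x)|^{-2}\,\d x=\int_\R F(x)\,|E(x)|^{-2}\,\d x;
\]
if $F\notin L^1(\mu)$ then $G\notin\mathcal{H}(E_\nu)=\mathcal{H}(E)$, and since $G$ has exponential type at most $\tau(E)$ membership in $\mathcal{H}(E)$ is equivalent to $\int_\R|G/E|^2\,\d x<\infty$, so this integral is $+\infty$ and both sides of \eqref{20240712_16:29} are $+\infty$. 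Hence (C4) holds for $E$, Theorem~\ref{dB_Thm2} applies and gives $\mathbb{E}(\mu;2)=\xi_1>0$, and undoing the rescaling yields $\mathbb{E}(\mu;\delta)>0$, placing the computation of $\mathbb{E}(\mu;\delta)$ within the framework of Theorem~\ref{dB_Thm2}.
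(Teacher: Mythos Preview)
Your overall strategy coincides with the paper's: construct the Hilbert space $\mathcal{H}$ of exponential-type-$\le 1$ functions with norm $\int |G|^2\,\d\mu$, identify it set-theoretically with $\mathcal{H}(E_\nu)$, prove the two norms are equivalent, then invoke the axiomatic characterization of de Branges spaces (Lemma~\ref{Lem15_AC}) to produce $E$ and verify (C1)--(C4). The set equality, the easy norm direction $\|\cdot\|_{\mathcal{H}}\lesssim\|\cdot\|_{\mathcal{H}(E_\nu)}$, and the verification of (C4) are all fine and parallel the paper's argument.

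However, you explicitly leave a genuine gap at the crucial step: the reverse inequality $\|G\|_{\mathcal{H}(E_\nu)}\lesssim\|G\|_{\mathcal{H}}$. This is precisely Proposition~\ref{Prop_17} in the paper, and it is the heart of the proof. Your suggestion that one should ``read the hypotheses so that $\mu$ has comparable power-type mass on all of $\R$, or at least enough mass near $0$'' is not how the paper proceeds, and indeed the stated hypotheses say nothing about $\mu$ on $[-R,R]$ beyond local finiteness (so $\mu$ could even vanish there). The paper instead appeals to the Amrein--Berthier--Nazarov uncertainty principle: for a bandlimited function, the $L^2$-mass on any bounded set is controlled by the $L^2$-mass on its complement. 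When $\nu\ge -\tfrac12$ one applies this directly to $G\in L^2(\R)$ to get $\int_{|x|\le R}|G|^2\,\d x\lesssim\int_{|x|>R}|G|^2\,\d x$, which combined with $|x|^{2\nu+1}\le R^{2\nu+1}$ on $[-R,R]$ and the hypothesis $W\simeq|x|^{2\nu+1}$ on $|x|>R$ gives the reverse bound. When $-1<\nu<-\tfrac12$ an extra manoeuvre is needed near the origin: first absorb the mass on $|x|<\varepsilon$ via the reproducing-kernel pointwise bound, then apply the uncertainty principle to $G\cdot E_{-\nu-1}$, which is bandlimited and in $L^2(\R)$. Without this ingredient your argument does not close, and adding hypotheses on $\mu$ near the origin would weaken the corollary rather than prove it.

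A smaller point: ``(C1) is inherited from $E_\nu$'' is too vague. The function $E$ you construct is not $E_\nu$; the paper verifies (C1) for this new $E$ by checking $\int_\R \tfrac{\log^+|E(x)|}{1+x^2}\,\d x<\infty$ via Jensen's inequality (splitting off a power of $|x|$) and then invoking Krein's theorem (Lemma~\ref{theorem-krein}).
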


\noindent {\sc Remark}: (i) Note that, in the setup of Corollary \ref{Cor6_PWeights}, for $\delta =0$ we have $\mathcal{E}(\mu; 0) = \{0\}$.

\subsection{Notation} We say that $A \lesssim B$ if there is a positive constant $C$ such that $A \leq CB$. We say that $A \simeq B$ when $A \lesssim B$ and $B \lesssim A$. The characteristic function of a set $X$ is denoted by ${\bf 1}_X$.

\section{Lifts and symmetrization: proof of Theorem \ref{Thm_dimension_shifts}}\label{Section_2_lifts}

\subsection{Preliminaries} \label{Lifts_Sec} The radial symmetry of the problem plays an important role in this argument, and we start by recalling two suitable constructions from the work of Holt and Vaaler \cite[Section 6]{HV}.

\smallskip

If $G:\C\to \C$ is an even entire function with power series representation
\begin{align*}
G(z) = \sum_{k=0}^\infty c_k z^{2k}\,,
\end{align*}
we define its {\it lift} to $\C^d$, namely the function ${\mc L}_d(G):\C^d \to \C$, by
\begin{align*}
{\mc L}_d(G)({\z}) := \sum_{k=0}^\infty c_k (z_1^2 + \ldots + z_d^2)^k.
\end{align*}

\smallskip

If $d>1$ and $F: \C^d \to \C$ is an entire function, we define its {\it radial symmetrization} $\widetilde{F}:\C^d \to \C$ by 
\begin{equation*}
\widetilde{F}(\z) := \int_{SO(d)}F(R\z)\,\d \eta (R),
\end{equation*}
where $SO(d)$ denotes the compact topological group of real orthogonal $d \times d$ matrices with determinant $1$, with associated Haar measure $\eta$ normalized so that $\eta(SO(d)) = 1$. If $d=1$, we simply set $\widetilde{F}(z) := \tfrac12\{F(z) + F(-z)\}$. 

\smallskip

The following result is a compilation of \cite[Lemmas 18 and 19]{HV}, and is concerned with the relation between these symmetrization mechanisms and the exponential type of the underlying functions.

\begin{lemma}\label{type-to-ntype} 
The following propositions hold:
\smallskip
\begin{itemize}
\item[(i)] Let $G: \C \to \C$ be an even entire function. Then $G$ has exponential type if and only if ${\mc L}_d(G)$ has exponential type, and $\tau(G) = \tau({\mc L}_d(G))$. 

\smallskip

\item[(ii)] Let $F: \C^d \to \C$ be an entire function. Then $\widetilde{F}:\C^d \to \C$ is an entire function with power series expansion of the form
\begin{equation*}
\widetilde{F}({\z}) = \sum_{k=0}^\infty c_k (z_1^2 + \ldots + z_d^2)^k.
\end{equation*}
Moreover, if $F$ has exponential type then $\widetilde{F}$ has exponential type and $\tau\big(\widetilde{F}\, \big) \leq \tau(F)$.

\end{itemize}
\end{lemma}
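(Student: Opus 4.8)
The goal is to prove Lemma~\ref{type-to-ntype}, which compiles \cite[Lemmas 18 and 19]{HV}, so the plan is essentially to reconstruct those arguments. For part (i), I would work directly with the power series. Writing $G(z) = \sum_k c_k z^{2k}$, the lift is ${\mc L}_d(G)(\z) = \sum_k c_k (z_1^2+\cdots+z_d^2)^k$, which is manifestly entire (the series converges wherever $G$ does, since $|z_1^2+\cdots+z_d^2| \le |\z|^2$ with room to spare). The key observation is that along the ``diagonal'' ray $\z = (z,0,\ldots,0)$ one has ${\mc L}_d(G)(z,0,\ldots,0) = G(z)$, and more generally ${\mc L}_d(G)$ restricted to any complex line through the origin of the form $\z = w\,\t$ with $\t \in \R^d$, $|\t| = 1$, $w \in \C$, equals $G(w)$. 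Since $\|\z\| = |w|$ for such $\z$ (by definition of the $\|\cdot\|$ norm as a supremum of linear functionals over the real unit ball, the supremum being attained), this immediately gives $\tau({\mc L}_d(G)) \ge \tau(G)$. For the reverse inequality I would estimate $|{\mc L}_d(G)(\z)|$ for general $\z \in \C^d$: using the Taylor coefficients and the bound $|z_1^2 + \cdots + z_d^2| \le \|\z\|^2$ (this is where the specific norm $\|\cdot\|$ matters — one checks $|z_1^2+\cdots+z_d^2| = |\sum_n z_n \overline{z_n}|$... actually more carefully, one bounds $|z_1^2+\cdots+z_d^2|$ by writing $\z = \x + i\y$ with $\x,\y \in \R^d$ and estimating), one gets $|{\mc L}_d(G)(\z)| \le \sum_k |c_k| \|\z\|^{2k} = M_G(\|\z\|)$ where $M_G(r) := \sum_k |c_k| r^{2k} \ge \sup_{|z|=r}|G(z)|$. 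A standard fact (Pólya–Plancherel, or just the relation between maximum modulus and Taylor coefficients) is that an even entire function $G$ of exponential type $\tau$ satisfies $M_G(r) \le C_\epsilon e^{(\tau+\epsilon)r}$ for every $\epsilon>0$; feeding this in yields $\tau({\mc L}_d(G)) \le \tau(G)$.

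For part (ii), the power-series form of $\widetilde F$ comes from the $SO(d)$-invariance: $\widetilde F$ is, by construction, invariant under the $SO(d)$-action, and an entire function on $\C^d$ invariant under all rotations must be a function of $z_1^2 + \cdots + z_d^2$ alone — one sees this by expanding $F$ in its Taylor series, averaging each homogeneous component over $SO(d)$, and noting that the only $SO(d)$-invariant polynomials are polynomials in $z_1^2+\cdots+z_d^2$ (invariant theory of the orthogonal group, or an elementary direct check degree by degree). That $\widetilde F$ is entire is clear since the integral over the compact group $SO(d)$ of an entire function depending continuously on parameters is entire (differentiate under the integral sign / Morera). For the type bound, I would note that for each fixed $R \in SO(d)$, the function $\z \mapsto F(R\z)$ has the same exponential type as $F$, because $\|R\z\| = \|\z\|$ (the Euclidean unit ball, hence the norm $\|\cdot\|$, is rotation-invariant). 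Then $|\widetilde F(\z)| \le \int_{SO(d)} |F(R\z)|\,\d\eta(R) \le \sup_R |F(R\z)|$, and taking $\limsup_{\|\z\|\to\infty} \|\z\|^{-1}\log$ of both sides gives $\tau(\widetilde F) \le \tau(F)$. (One should be a little careful that the $\limsup$ of the average is controlled by the average of the $\limsup$ only after passing through the sup, which is the route taken above.)

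\textbf{Main obstacle.} The only genuinely delicate point is the precise interplay between the two norms $|\cdot|$ and $\|\cdot\|$ on $\C^d$ in part (i): one needs both that $\|w\t\| = |w|$ for real unit vectors $\t$ (easy, and gives the lower bound on the type) and a clean upper bound of the shape $|z_1^2+\cdots+z_d^2| \le \|\z\|^2$ valid for all $\z \in \C^d$, in order to reduce the growth of ${\mc L}_d(G)$ on $\C^d$ to the growth of the one-variable majorant $M_G$ on a ray. Getting the constants right here (and invoking the correct one-variable lemma relating $M_G(r) = \sum|c_k|r^{2k}$ to $\sup_{|z|=r}|G(z)|$ and thence to the exponential type) is the crux; everything else is the routine invariant-theory and differentiation-under-the-integral bookkeeping. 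Since the statement is explicitly quoted from Holt–Vaaler, I would in practice simply cite \cite[Lemmas 18, 19]{HV} and sketch only the diagonal-restriction identity and the rotation-invariance of $\|\cdot\|$, which are the two facts a reader most wants to see.
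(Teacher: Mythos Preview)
Your plan is correct and matches what the paper does: the paper provides no proof at all for this lemma, simply citing \cite[Lemmas 18 and 19]{HV}, which is exactly what you propose to do in practice. Your reconstruction of the Holt--Vaaler argument is accurate; in particular the key inequality $|z_1^2+\cdots+z_d^2|\le \|\z\|^2$ that you flag as the main obstacle does hold (writing $\z=\x+i\y$ one has $|z_1^2+\cdots+z_d^2|\le |\z|^2$, and then $\|\z\|^2=\tfrac12\big(|\z|^2+|z_1^2+\cdots+z_d^2|\big)$ from the $2\times2$ eigenvalue computation, which gives the claim), and the rotation-invariance $\|R\z\|=\|\z\|$ is immediate from the definition since $R^T$ permutes the real unit ball.
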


\subsection{Proof of Theorem \ref{Thm_dimension_shifts}} Let $F \in \mathcal{E}(\mu; \delta; d)$. Then, from Lemma \ref{type-to-ntype} (ii) we have that $\tau\big(\widetilde{F}\, \big) \leq \tau(F) \leq \delta$. The fact that $F$ is real entire, not identically zero and eventually non-negative implies that $\widetilde{F}$ is also real entire, not identically zero and eventually non-negative, and we plainly have $r\big(\widetilde{F}\, \big) \leq r(F)$. Using Fubini's theorem and the fact that $\mu^d$ is rotationally invariant, we have
\begin{align}\label{20240508_14:51}
\begin{split}
\int_{\R^d} \big| \widetilde{F}(\x)\big| \,\d \mu^d(\x) & \leq  \int_{\R^d} \int_{SO(d)}|F(R\x)|\,\d \eta(R)\,\d \mu^d(\x) \\
& = \int_{SO(d)}  \int_{\R^d}|F(R\x)|\,\d \mu^d(\x)\,\d \eta(R)\\
& =  \int_{SO(d)}  \int_{\R^d}|F(\x)|\,\d \mu^d(\x)\,\d \eta(R)\\
& = \int_{\R^d}|F(\x)|\,\d \mu^d(\x).
\end{split}
\end{align}
Hence $\widetilde{F} \in L^1(\R^d,\mu^d)$. An identical computation to \eqref{20240508_14:51}, without the absolute values, yields
\begin{align*}
\int_{\R^d} \widetilde{F}(\x) \,\d \mu^d(\x) = \int_{\R^d} \int_{SO(d)}F(R\x)\,\d \eta(R)\,\d \mu^d(\x) = \int_{\R^d}F(\x)\,\d \mu^d(\x) \leq 0.
\end{align*}
We then conclude that $\widetilde{F} \in \mathcal{E}(\mu; \delta; d)$ (in sum, we can restrict ourselves to real entire functions that are radial on $\R^d$ without loss of generality).

\smallskip

Using Lemma \ref{type-to-ntype} we see that $\widetilde{F} = {\mc L}_d(G)$ for a certain $G: \C \to \C$ real entire, even, eventually non-negative, and with $\tau(G) = \tau\big(\widetilde{F}\, \big) \leq \delta$. Moreover, from \eqref{20240507_18:19} we have 
\begin{align}\label{20240508_16:56}
\int_{\R^d} \big|\widetilde{F}(\x)\big| \,\d \mu^d(\x) = \int_0^{\infty}\int_{{\mathbb S}^{d-1}} |G(r)|  \, \d \sigma_{d-1}(\omega) \,\d \mu(r) = \frac{\omega_{d-1}}{2} \int_{\R} |G(x)| \,\d\mu(x)\,,
\end{align}
where $\omega_{d-1}$ is the surface area of ${\mathbb S}^{d-1}$, and hence $G \in L^1(\R, \mu)$. Similarly,
\begin{align}\label{20240508_16:57}
0 \geq \int_{\R^d} \widetilde{F}(\x) \,\d \mu^d(\x) = \int_0^{\infty}\int_{{\mathbb S}^{d-1}} G(r)  \, \d \sigma_{d-1}(\omega) \,\d \mu(r) = \frac{\omega_{d-1}}{2} \int_{\R} G(x) \,\d\mu(x)\,,
\end{align}
and we conclude that $G \in  \mathcal{E}(\mu; \delta; 1)$. Since $r(G) = r\big(\widetilde{F}\, \big)$ we arrive at the inequality
\begin{equation}\label{20240508_17:01}
\mathbb{E}(\mu; \delta; 1)  \leq \mathbb{E}(\mu; \delta; d).
\end{equation}

\smallskip

Conversely, starting now with $H \in \mathcal{E}(\mu; \delta; 1)$, since the measure $\mu$ is even, we can replace it by $\widetilde{H}(z) := \tfrac12\{H(z) + H(-z)\}$ that verifies $r\big(\widetilde{H}\, \big) \leq r(H)$. We then consider the lift ${\mc L}_d\big(\widetilde{H}\big)$, and note that Lemma \ref{type-to-ntype} and computations as in \eqref{20240508_16:56} and \eqref{20240508_16:57}
show that ${\mc L}_d\big(\widetilde{H}\big) \in  \mathcal{E}(\mu; \delta; d)$. Since $r\big(\widetilde{H}\big) = r\big({\mc L}_d\big(\widetilde{H}\big)\big)$, we arrive at the inequality
\begin{equation}\label{20240508_17:02}
\mathbb{E}(\mu; \delta; d)  \leq \mathbb{E}(\mu; \delta; 1).
\end{equation}

The result now follows from \eqref{20240508_17:01} and \eqref{20240508_17:02}.

\section{De Branges spaces and the proof of Theorem \ref{dB_Thm2}}\label{Sec3_proofs}

\subsection{Preliminaries} We start by reviewing some of the basic elements from the theory of de Branges spaces, and collecting a few lemmas from the literature that will be important for our purposes. 

\subsubsection{De Branges spaces} \label{dB_discussion_subs} We provide a brief overview, building up on the material introduced in \S \ref{dB_Subsection}. For a detailed exposition we refer the reader to \cite[Chapters I and II]{Branges}.

\smallskip

 Given a Hermite-Biehler function $E: \C \to \C$, the de Branges space $\mc{H}(E)$ associated to $E$ is the space of entire functions $F:\C \to \C$ such that
\begin{equation}\label{20240508_18:22}
\|F\|_{\mc{H}(E)}^2 := \int_{\R} |F(x)|^{2} \, |E(x)|^{-2} \, \d x <\infty\,,
\end{equation}
and such that $F/E$ and $F^*/E$ have bounded type and non-positive mean type in $\C^{+}$. This is a Hilbert space with inner product given by
\begin{equation*}
\langle F, G \rangle_{\mc{H}(E)} :=  \int_{\R}F(x) \, \ov{G(x)} \, |E(x)|^{-2} \, \d x.
\end{equation*}
In fact, this is a reproducing kernel Hilbert space. That is, for each $w \in \C$ the evaluation functional $F \mapsto F(w)$ is bounded on $\mc{H}(E)$ or, equivalently, by the Riesz representation theorem, there exists a function $K(w,\cdot) \in \mc{H}(E)$ such that 
$$F(w) = \langle F, K(w,\cdot) \rangle_{\mc{H}(E)}$$
for all $F \in \mc{H}(E)$. Writing $E(z) = A(z) - iB(z)$ with $A$ and $B$ real entire as in \eqref{20240508_17:21}, the reproducing kernel $K(w,\cdot)$, is given by (see \cite[Theorem 19]{Branges})
\begin{equation}\label{Def_Rep_Ker}
K(w,z) = \frac{B(z)A(\ov{w}) - A(z)B(\ov{w})}{\pi (z - \ov{w})}\,,
\end{equation}
and, when $z = \ov{w}$, 
\begin{equation*}
K(\ov{z}, z) = \frac{B'(z)A(z) - A'(z)B(z)}{\pi }.
\end{equation*}

\smallskip

We shall only be interested in the situation where $K(w,w) = \|K(w, \cdot)\|^2_{\mc{H}(E)} > 0$ for all $w \in \C$, which is equivalent (see \cite[Lemma 11]{HV}) to the statement that $E$ has no real zeros. A phase function $\varphi:\R \to \R$ associated with $E$ is a continuous function such that $E(x)e^{i \varphi(x)}$ is real for all values of $x$. In this case,
$$\varphi'(x) = \frac{\pi \,K(x,x)}{|E(x)|^2} >0.$$ 
If $\alpha$ is a given real number, the solutions of $\varphi(\xi) \equiv\, \alpha\, {\rm (mod \ }\pi{\rm)}$ correspond to the zeros (which must be real and simple) of the real entire function 
\begin{align}\label{20240713_13:17}
T_{\alpha}(z) := \frac{e^{i \alpha}E(z) - e^{-i \alpha}E^*(z)}{2i}.\end{align}
Note that the zeros of any two $T_{\alpha}$ and $T_{\beta}$ interlace. With this notation, observe that $T_0 = -B$ and $T_{\pi/2} = A$. Observe also that there is at most one real number $\alpha$ modulo $\pi$ such that $T_{\alpha} \in \mc{H}(E)$ (otherwise we would have $E \in \mc{H}(E)$, which would contradict \eqref{20240508_18:22}). One of the cornerstones of the theory is the following orthogonality result \cite[Theorem 22]{Branges}: the set of functions $\Gamma_{\alpha}:= \{K(\xi, \cdot)\ ;  T_{\alpha}(\xi) = 0\}$ is always an orthogonal set in $\mc{H}(E)$; if $T_{\alpha} \notin \mc{H}(E)$, the set $\Gamma_{\alpha}$ is an orthogonal basis of $\mc{H}(E)$ and, if $T_{\alpha} \in \mc{H}(E)$, the only elements of $\mc{H}(E)$ that are orthogonal to $\Gamma_{\alpha}$ are the constant multiples of $T_{\alpha}$. In particular, if $T_{\alpha} \notin \mc{H}(E)$, for every $F \in \mc{H}(E)$ we have
\begin{equation}\label{20210809_11:01}
F(z) = \sum_{T_{\alpha}(\xi) = 0}  \frac{F(\xi)}{K(\xi, \xi)} \, K(\xi, z) \ \ \ {\rm and} \ \ \   \|F\|_{\mc{H}(E)}^2 = \sum_{T_{\alpha}(\xi) = 0}  \frac{\big| F(\xi)\big|^2 }{K(\xi, \xi)}.
\end{equation}

\subsubsection{Krein's memoir} We recall a result of M. G. Krein \cite{K} that allows us to relate the condition of bounded type in $\C^+$ to the condition of exponential type in $\C$. Recall that $\log^+|x| :=\max\{0, \log |x|\}$.

\begin{lemma}[Krein]\label{theorem-krein} 
Let $F:\C \to \C$ be an entire function. The following conditions are equivalent:
\begin{enumerate}
\item[(i)] $F$ and $F^*$ have bounded type in the open upper half-plane $\C^+$.

\smallskip

\item[(ii)] $F$ has exponential type and 
\begin{equation*}
\int_{\R} \frac{\log^+|F(x)|}{1+x^2} \, \d x<\infty.
\end{equation*}
\end{enumerate}
If either and therefore both of these conditions hold then $\tau(F) = \max\{v(F), v(F^*)\}$.
\end{lemma}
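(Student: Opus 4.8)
The plan is to derive both implications and the type identity from the Nevanlinna factorization theory for functions of bounded type in a half-plane (as recalled in \cite[Theorems 9 and 10]{Branges}), together with Carleman's formula and a Phragm\'en--Lindel\"of comparison carried out in quadrants rather than in full half-planes. First a reduction: since $F^*(iy)=\overline{F(-iy)}$ and the reflection $z\mapsto\overline z$ maps $\C^+$ onto $\C^-$, the function $F^*$ is of bounded type in $\C^+$ if and only if $F$ is of bounded type in $\C^-$, in which case $v(F^*)=\limsup_{y\to\infty}y^{-1}\log|F(-iy)|$ is the mean type of $F$ in the lower half-plane. Thus condition (i) says exactly that $F$ is of bounded type in both half-planes, and condition (ii) says that $F$ lies in the Cartwright class; note also that $F^*$ has exponential type $\tau(F)$ and the same modulus as $F$ on $\R$, so (ii) holds for $F$ precisely when it holds for $F^*$.

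For the implication (ii) $\Rightarrow$ (i), which is the substantive one, the mechanism is the Levinson--Cartwright theory: from the exponential-type bound and $\int_{\R}\log^+|F(x)|(1+x^2)^{-1}\,\d x<\infty$ one first notes (a classical fact about entire functions of exponential type, whose real zeros have finite upper density) that $\int_{\R}\bigl|\log|F(x)|\bigr|(1+x^2)^{-1}\,\d x<\infty$ as well, and then Carleman's formula over large semicircles in $\C^+$, passed to the limit, produces a majorization of $\log|F|$ in $\C^+$ by the Poisson integral of $\log|F|$ over $\R$ plus a linear term $\sigma\,\mathrm{Im}\,z$ --- which is precisely the statement that $F$ is of bounded type in $\C^+$; applying the same to $F^*$ gives (i). Conversely, for (i) $\Rightarrow$ (ii): bounded type in $\C^+$ already yields $\int_{\R}\log^+|F(x)|(1+x^2)^{-1}\,\d x<\infty$ and a majorant $\log|F(z)|\le U_+(z)+v(F)\,\mathrm{Im}\,z$ in $\C^+$ with $U_+$ the Poisson integral of a nonnegative boundary measure of finite mass against $(1+t^2)^{-1}\,\d t$, and symmetrically in $\C^-$; a dominated-convergence computation shows $U_\pm(z)=o(|z|)$ along every ray into the respective half-plane, so $\log|F(z)|=O(|z|)$ along every ray, and a Phragm\'en--Lindel\"of argument in the four quadrants --- with the real-axis control of $|F|$ fed in only through $\log^+|F|\in L^1((1+x^2)^{-1}\,\d x)$, again via Carleman's formula rather than a pointwise bound --- upgrades this to $\log|F(z)|=O(|z|)$ on all of $\C$; hence $F$ has exponential type, the log-integrability part of (ii) having already been recorded.

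Finally, assume both conditions, i.e.\ $F$ is in the Cartwright class. The bound $\tau(F)\ge\max\{v(F),v(F^*)\}$ is immediate on restricting the $\limsup$ defining $\tau(F)$ to the rays $z=\pm iy$. For the reverse inequality I would invoke Cartwright's structure theorem: the indicator diagram of a Cartwright-class function is a (possibly degenerate) segment of the imaginary axis whose top endpoint is $iv(F)$ and bottom endpoint $-iv(F^*)$ --- in particular $v(F)+v(F^*)\ge 0$ --- so its indicator is $h_F(\theta)=v(F)\sin\theta$ for $\sin\theta\ge 0$ and $h_F(\theta)=-v(F^*)\sin\theta$ for $\sin\theta\le 0$; since the indicator of an entire function of exponential type is continuous, $\tau(F)=\max_\theta h_F(\theta)=\max\{v(F),v(F^*)\}$. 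The main obstacle throughout is the Carleman-formula step underlying (ii) $\Rightarrow$ (i) and the Phragm\'en--Lindel\"of passage from ray-by-ray growth to a global $O(|z|)$ bound while the real-axis control of $|F|$ is only integrated, together with the structural input of Cartwright's theorem for the type identity; all of this is classical but substantial, which is why we quote the lemma from Krein's memoir \cite{K}.
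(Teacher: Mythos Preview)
The paper does not prove this lemma: it is stated as a quotation from Krein's memoir \cite{K} (``We recall a result of M.~G.~Krein \cite{K}\ldots''), with no argument supplied. So there is no ``paper's own proof'' to compare against; the authors simply import the result as a black box from the literature, exactly as you yourself note in your final sentence.

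Your sketch is a reasonable outline of the classical route --- Nevanlinna/Poisson factorization for (i) $\Rightarrow$ (ii), Carleman's formula and Phragm\'en--Lindel\"of for (ii) $\Rightarrow$ (i), and Cartwright's indicator-diagram theorem for the type identity --- and it correctly identifies the substantive obstacles. A couple of points where the sketch is thin: in the passage from $\log^+|F|\in L^1((1+x^2)^{-1}\d x)$ to $\log|F|\in L^1((1+x^2)^{-1}\d x)$ you appeal to ``real zeros have finite upper density,'' but the actual argument needs more (one typically uses the Blaschke-type condition on zeros coming from bounded type, or argues via Jensen's formula in half-discs); and the quadrant Phragm\'en--Lindel\"of step, with only integrated control on the real axis, deserves a pointer to a precise version (e.g.\ the Ahlfors--Heins or Levinson form). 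None of this is wrong, just compressed --- appropriately so for a result being cited rather than proved.
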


Let $\mathcal{B}$ denote the set of entire functions $F$ which satisfy one and therefore both of the conditions (i) or (ii) in Lemma \ref{theorem-krein}. The next result can be proved using Lemma \ref{theorem-krein} and is contained in \cite[Lemma 12]{HV}. It allows us to easily characterize the de Branges space $\mc{H}(E)$ when $E$ has bounded type in $\C^+$.

\begin{lemma} \label{Lem_9_charac_dB_spaces_ET} \label{Lem_9_VET} Let $E$ be a Hermite-Biehler function of bounded type in $\C^+$, and $F$ be an entire function. Then $E$ belongs to $\mathcal{B}$ and the following are equivalent:
\begin{itemize}
\item[(i)] $F \in \mc{H}(E)$.
\item[(ii)] $F \in \mathcal{B}$, $\max\{v(F), v(F^*)\} \leq v(E)$ and \eqref{20240508_18:22} holds.
\item[(ii)] $F$ has exponential type, $\tau(F) \leq \tau(E)$ and \eqref{20240508_18:22} holds.
\end{itemize}
Moreover, if $\xi$ is real and $0 < K(\xi, \xi)$, the entire function $K(\xi, \cdot) \in \mc{H}(E)$ verifies $\tau(K(\xi, \cdot)) = \tau(E)$. 
\end{lemma}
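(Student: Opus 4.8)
The plan is to derive the entire statement from Krein's lemma (Lemma~\ref{theorem-krein}), the definition of $\mc{H}(E)$, and the additivity of mean type in $\C^+$ for functions of bounded type (so that $v(fg)=v(f)+v(g)$ and $v(f/g)=v(f)-v(g)$). First I would record the preliminary facts. Since $E$ is Hermite--Biehler, $E^*/E$ is analytic and bounded by $1$ in $\C^+$; as $E$ is assumed of bounded type in $\C^+$, so is $E^*=(E^*/E)\,E$, and hence condition (i) of Lemma~\ref{theorem-krein} holds, giving $E\in\mathcal{B}$. Then $v(E^*)=v(E)+v(E^*/E)\le v(E)$, so $\tau(E)=\max\{v(E),v(E^*)\}=v(E)$. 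Also $E$ has no zeros in $\C^+$ (there $|E|>|E^*|\ge0$), so writing $E=P/Q$ with $P,Q$ bounded and analytic and $P\not\equiv0$, the function $1/E=Q/P$ is again of bounded type in $\C^+$, with $v(1/E)=-v(E)$.

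Next I would settle (ii)$\Leftrightarrow$(iii). If $F\in\mathcal{B}$, then $F$ has exponential type and $\tau(F)=\max\{v(F),v(F^*)\}$ by Lemma~\ref{theorem-krein}, so in view of $\tau(E)=v(E)$ the bounds $\max\{v(F),v(F^*)\}\le v(E)$ and $\tau(F)\le\tau(E)$ are the same; this handles (ii)$\Rightarrow$(iii). For the converse it suffices to show that any entire $F$ of exponential type with $\|F\|_{\mc{H}(E)}<\infty$ automatically lies in $\mathcal{B}$, which I would get from the elementary inequality $\log^+|F(x)|=\tfrac12\log^+|F(x)|^2\le\tfrac12\log^+\!\big(|F(x)|^2|E(x)|^{-2}\big)+\log^+|E(x)|$, the bound $\log^+ t\le t$, and $E\in\mathcal{B}$, which together yield
\begin{equation*}
\int_{\R}\frac{\log^+|F(x)|}{1+x^2}\,\d x\ \le\ \tfrac12\,\|F\|_{\mc{H}(E)}^2+\int_{\R}\frac{\log^+|E(x)|}{1+x^2}\,\d x\ <\ \infty,
\end{equation*}
so $F\in\mathcal{B}$ by Lemma~\ref{theorem-krein}, and then $\tau(F)=\max\{v(F),v(F^*)\}\le\tau(E)$ gives (iii)$\Rightarrow$(ii).

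Then I would do (i)$\Leftrightarrow$(ii). If $F\in\mc{H}(E)$, then $\|F\|_{\mc{H}(E)}<\infty$ and $F=(F/E)\,E$, $F^*=(F^*/E)\,E$ are products of functions of bounded type, hence of bounded type in $\C^+$, so $F\in\mathcal{B}$; moreover $v(F)=v(F/E)+v(E)\le v(E)$ and $v(F^*)=v(F^*/E)+v(E)\le v(E)$, since $F/E$ and $F^*/E$ have non-positive mean type --- this is (ii). Conversely, if (ii) holds, then $F$, $F^*$ and $1/E$ all have bounded type in $\C^+$, hence so do $F/E$ and $F^*/E$, with $v(F/E)=v(F)-v(E)\le0$ and $v(F^*/E)=v(F^*)-v(E)\le0$; together with $\|F\|_{\mc{H}(E)}<\infty$ this is exactly $F\in\mc{H}(E)$. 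For the ``Moreover'' clause, $K(\xi,\cdot)\in\mc{H}(E)$ is the reproducing property, $K(\xi,\xi)=\|K(\xi,\cdot)\|_{\mc{H}(E)}^2>0$ forces $K(\xi,\cdot)\not\equiv0$, and then (iii) applied to $K(\xi,\cdot)$ gives $\tau(K(\xi,\cdot))\le\tau(E)$ immediately.

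The one point that needs real work is the reverse bound $\tau(K(\xi,\cdot))\ge\tau(E)$. Here I would use that $K(\xi,\xi)>0$ is equivalent to $E(\xi)\ne0$ (\cite[Lemma~11]{HV}), so $A(\xi),B(\xi)$ are not both zero; substituting $A=\tfrac12(E+E^*)$, $B=\tfrac{i}{2}(E-E^*)$ into \eqref{Def_Rep_Ker} gives
\begin{equation*}
\pi(z-\xi)\,K(\xi,z)=B(z)A(\xi)-A(z)B(\xi)=\tfrac12\big(\lambda\,E(z)+\ov{\lambda}\,E^*(z)\big),\qquad\lambda:=iA(\xi)-B(\xi)\ne0,
\end{equation*}
and the right-hand side is a nonzero constant multiple of some $T_\beta$ (cf.~\eqref{20240713_13:17}); hence $K(\xi,z)=c\,T_\beta(z)/(z-\xi)$ and $\tau(K(\xi,\cdot))=\tau(T_\beta)=v(T_\beta)$, since $T_\beta$ is real entire. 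From $e^{i\beta}E=2i\,T_\beta+e^{-i\beta}E^*$ and $v(E^*)\le v(E)=\tau(E)$ one gets $\tau(E)\le\max\{v(T_\beta),v(E^*)\}$, which forces $v(T_\beta)=\tau(E)$ as soon as $v(E^*)<v(E)$; the borderline case $v(E^*)=v(E)$ I would handle as in \cite{HV}, e.g.\ via the identity $T_\beta^2+T_{\beta+\pi/2}^2=EE^*$ (which yields $\max\{\tau(T_\beta),\tau(T_{\beta+\pi/2})\}=\tau(E)$) together with the interlacing of the zeros of $T_\beta$ and $T_{\beta+\pi/2}$, which forces their types to coincide. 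I expect this last step --- ruling out a drop in the type of $T_\beta=(e^{i\beta}E-e^{-i\beta}E^*)/2i$ through cancellation between $E$ and $E^*$ when the two have equal mean type --- to be the main obstacle; everything else is bookkeeping with Lemma~\ref{theorem-krein} and mean types.
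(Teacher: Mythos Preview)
The paper does not give its own proof of this lemma; it simply records that the result ``can be proved using Lemma~\ref{theorem-krein} and is contained in \cite[Lemma 12]{HV}.'' Your argument therefore already supplies more than the paper does, and your treatment of the equivalences (i)$\Leftrightarrow$(ii)$\Leftrightarrow$(iii) is correct: the inputs are exactly the additivity of mean type, the fact that $1/E$ is of bounded type in $\C^+$ (since $E$ is zero-free there), and the $\log^+$ estimate you wrote down to place $F$ in $\mathcal{B}$.

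For the ``Moreover'' clause, your reduction to $\tau(T_\beta)=\tau(E)$ and the case $v(E^*)<v(E)$ are fine. In the borderline case $v(E^*)=v(E)$, your identity $T_\beta^2+T_{\beta+\pi/2}^2=EE^*$ together with $v(f+g)\le\max\{v(f),v(g)\}$ already gives $\max\{v(T_\beta),v(T_{\beta+\pi/2})\}\ge\tfrac12\,v(EE^*)=\tau(E)$, hence equality for the maximum. The remaining assertion, that ``interlacing of zeros forces the types to coincide'', is correct but is not itself a proof. A clean way to finish (and in fact to bypass the case split entirely): since $e^{i\beta}E=T_{\beta+\pi/2}+iT_\beta$ is again Hermite--Biehler, the quotient $-T_\beta/T_{\beta+\pi/2}$ maps $\C^+$ into $\C^+$, and so does its negative reciprocal $-T_{\beta+\pi/2}/T_\beta$; both are therefore of bounded type with mean type $\le 0$, and since their mean types sum to zero one gets $v(T_\beta)=v(T_{\beta+\pi/2})$. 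Combined with the previous line (or directly with $v(E)\le\max\{v(T_\beta),v(T_{\beta+\pi/2})\}$ from $e^{i\beta}E=T_{\beta+\pi/2}+iT_\beta$), this yields $\tau(T_\beta)=v(T_\beta)=\tau(E)$, completing the argument.
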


Our final preliminary lemma in this subsection is \cite[Lemma 14]{CL1}, a factorization result for real entire functions of exponential type that are non-negative on $\R$. This is an important tool to connect the $L^1$ and the $L^2$ theories in our context.

\begin{lemma}\label{nonnegl1-tohe} 
Let $E$ be a Hermite-Biehler function of bounded type in $\C^+$ with exponential  type $\tau(E)$. Let $F:\C \to \C$ be a real entire function of exponential type at most $2\tau(E)$ that satisfies
\begin{align*}
F(x)\ge 0
\end{align*}
for all $x \in \R$ and
\begin{align*}
\int_{\R} F(x)\, |E(x)|^{-2} \,\d x <\infty.
\end{align*} 
Then there exists $U\in\H(E)$ such that
\begin{align*}
F(z) = U(z) \,U^*(z)
\end{align*}
for all $z \in \C$.
\end{lemma}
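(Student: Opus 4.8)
The plan is to obtain $U$ from a Fej\'er--Riesz/Krein-type factorization of $F$ and then recognize membership in $\mc{H}(E)$ via Lemma \ref{Lem_9_charac_dB_spaces_ET}. It suffices to produce an entire function $U$ with $F = U\,U^*$ on $\C$ and $\tau(U) \le \tau(E)$: granting this, $U$ has exponential type, and $|U(x)|^{2} = U(x)\,U^*(x) = F(x)$ for real $x$, whence $\int_{\R} |U(x)|^{2}\,|E(x)|^{-2}\,\d x = \int_{\R} F(x)\,|E(x)|^{-2}\,\d x < \infty$; the last characterization in Lemma \ref{Lem_9_charac_dB_spaces_ET} (applicable because $E$ has bounded type in $\C^{+}$) then gives $U \in \mc{H}(E)$.

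First I would check that $F$ belongs to the class $\B$ of Lemma \ref{theorem-krein}. Its exponential type is at most $2\tau(E) < \infty$ by hypothesis, so it remains to verify $\int_{\R}(1+x^{2})^{-1}\log^{+}|F(x)|\,\d x < \infty$. Put $g(x) := F(x)\,|E(x)|^{-2} \ge 0$, which lies in $L^{1}(\R)$ by hypothesis. Since $\log^{+}(ab) \le \log^{+}a + \log^{+}b$, we have $\log^{+}|F(x)| = \log^{+}\big(g(x)\,|E(x)|^{2}\big) \le \log^{+} g(x) + 2\log^{+}|E(x)|$; the first term is integrable against $(1+x^{2})^{-1}\,\d x$ because $\log^{+} g \le g \in L^{1}(\R)$, and the second because $E \in \B$ (Lemma \ref{Lem_9_charac_dB_spaces_ET}). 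Hence $F \in \B$, and since $F = F^*$ the concluding clause of Lemma \ref{theorem-krein} gives $\tau(F) = v(F)$.

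Now the factorization itself. Since $F$ is real entire and nonnegative on $\R$, its real zeros have even multiplicity, and since $F = F^*$ its non-real zeros occur in conjugate pairs of equal multiplicity. One builds $U$ from the Hadamard product of $F$ by retaining the non-real zeros with negative imaginary part (with full multiplicity), each real zero with half its multiplicity, the factor $z^{m_{0}}$ where $2m_{0}$ is the order of vanishing of $F$ at the origin, a positive constant, and an exponential factor $e^{i\beta z}$ with $\beta \in \R$ chosen so that $v(U) = v(U^*)$. This is exactly Krein's factorization theorem for entire functions of exponential type that are nonnegative on $\R$: the resulting $U$ lies in $\B$ and satisfies $F = U\,U^*$. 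Additivity of the mean type for functions of bounded type in $\C^{+}$ then gives $v(F) = v(U\,U^*) = v(U) + v(U^*) = 2\,v(U)$; combined with $v(F) = \tau(F) \le 2\tau(E)$ from the previous step and with $\tau(U) = \max\{v(U),v(U^*)\} = v(U)$ (Lemma \ref{theorem-krein}, valid since $U \in \B$), this yields $\tau(U) = \tfrac12\tau(F) \le \tau(E)$, which is precisely what the first paragraph requires.

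The heart of the matter is the factorization step, and specifically the sharp equality $\tau(U) = \tfrac12\tau(F)$ rather than the bare existence of some $U$ with $F = U\,U^*$: from one valid factorization one can manufacture others with $\tau(U)$ arbitrarily large by inserting $e^{i\beta z}$ into $U$ and $e^{-i\beta z}$ into $U^*$ for $\beta > 0$, so the balanced choice of the exponential factor, together with the symmetric distribution of the zeros between the upper and lower half-planes, is essential. Making this rigorous is the substance of Krein's theorem; one can cite it (see \cite{CL1}) or reprove it from the Nevanlinna factorization of functions of bounded type in $\C^{+}$ (Blaschke, singular-inner and outer parts), using $F = F^*$ to see that the upper and lower mean types of $F$ agree and that each of these three factors admits a "square root" that distributes symmetrically between the two half-planes. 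The remaining points — convergence of the Hadamard product, the identity $F = U\,U^*$ after the constant is absorbed, and the elementary estimates of the first two paragraphs — are routine.
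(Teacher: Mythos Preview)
The paper does not prove this lemma; it is quoted verbatim from \cite[Lemma~14]{CL1}. Your sketch is correct and follows the classical route one would expect in that reference: verify $F\in\mathcal{B}$ via $\log^{+}|F|\le\log^{+}(F|E|^{-2})+2\log^{+}|E|$, invoke the Krein/Akhiezer factorization $F=UU^{*}$ for nonnegative functions in $\mathcal{B}$, balance the exponential factor so that $v(U)=v(U^{*})=\tfrac12 v(F)\le\tau(E)$, and then read off $U\in\mathcal{H}(E)$ from Lemma~\ref{Lem_9_charac_dB_spaces_ET}. Each step is sound, and your remark that the balancing of $e^{i\beta z}$ is the crucial point (otherwise $\tau(U)$ is uncontrolled) is exactly right.
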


\subsubsection{Polynomial reductions} \label{SubSec_pol_red} We consider in this subsection a basic auxiliary result that allows us to reduce the search for extremizers in our extremal problem (EP1) to functions that have only one sign change. The next proposition is related to \cite[Problem 45, Part VI]{PS} and we provide here a short proof for the reader's convenience.
\begin{proposition}\label{Lem_polynomial}
    Let $n\ge 1$ be an integer and let $0<t_1<t_2<\ldots<t_n$ be distinct positive real numbers. If $P$ is the monic polynomial defined by 
    \begin{equation*}
        P(x)=\prod_{k=1}^{n}(x^2-t_k^2),
    \end{equation*}
then, for any positive number $r>t_n$, there exist even polynomials $Q$ and $R$, both non-negative on $\R$, satisfying the following properties:
\smallskip
\begin{itemize}
\item[(i)] $P(x)=(x^2-r^2)Q(x) +R(x)$ for all $x\in \mathbb{R}$.

\medskip

\item[(ii)] ${\rm deg}(Q)= 
\begin{cases}
 2n-2, & \textit{ if } n \textit{ is odd;} \\
 2n-4, & \textit{ if } n \textit{ is even;} 
 \end{cases}$
  \ \ \ and \ \ \
  ${\rm deg}(R)= \begin{cases}
   2n-2, & \textit{ if } n \textit{ is odd;} \\
                2n, &  \textit{ if } n \textit{ is even.} 
\end{cases}$

\end{itemize}
\end{proposition}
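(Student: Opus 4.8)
The plan is to pass to the single variable $y=x^2$ and then prove the resulting one-dimensional statement by induction on $n$, decreasing $n$ by two at each step (so that the parity of $n$ is preserved) and, crucially, peeling off the \emph{two largest} roots. Writing $P(x)=\wt P(x^2)$ with $\wt P(y):=\prod_{k=1}^n(y-t_k^2)$, a monic polynomial of degree $n$ whose roots $0<t_1^2<\dots<t_n^2$ all lie to the left of $r^2$, and using that an even polynomial in $x$ is non-negative on $\R$ if and only if the corresponding polynomial in $y$ is non-negative on $[0,\infty)$, it suffices to produce polynomials $\wt Q,\wt R\ge 0$ on $[0,\infty)$ with $\wt P(y)=(y-r^2)\wt Q(y)+\wt R(y)$ and with $\deg\wt Q=\deg\wt R=n-1$ when $n$ is odd and $\deg\wt Q=n-2$, $\deg\wt R=n$ when $n$ is even; then $Q(x):=\wt Q(x^2)$ and $R(x):=\wt R(x^2)$ have the asserted properties.

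For the base cases I take $\wt Q\equiv 1$ and $\wt R\equiv r^2-t_1^2>0$ when $n=1$, and $\wt Q\equiv q_2:=(r^2-t_1^2)+(r^2-t_2^2)>0$, $\wt R(y):=(y-t_1^2)(y-t_2^2)-(y-r^2)q_2$ when $n=2$; a direct computation shows that in the latter case $\wt R$ is a monic quadratic of discriminant $-4(r^2-t_1^2)(r^2-t_2^2)<0$, hence strictly positive. For the inductive step ($n\ge 3$) I write $\wt P(y)=(y-t_{n-1}^2)(y-t_n^2)\,\wt P_\circ(y)$ with $\wt P_\circ(y):=\prod_{k=1}^{n-2}(y-t_k^2)$, divide the leading quadratic by $y-r^2$ exactly as in the case $n=2$ to get $(y-t_{n-1}^2)(y-t_n^2)=(y-r^2)q_n+\sigma_n(y)$ with $q_n:=(r^2-t_{n-1}^2)+(r^2-t_n^2)>0$ and $\sigma_n$ a monic quadratic of negative discriminant (hence $\sigma_n>0$ on $\R$), and apply the inductive hypothesis $\wt P_\circ=(y-r^2)\wt Q_\circ+\wt R_\circ$. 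A short rearrangement then gives
\begin{equation*}
\wt P(y)=(y-r^2)\wt Q(y)+\wt R(y),\qquad \wt Q:=q_n\wt P_\circ+\sigma_n\wt Q_\circ,\qquad \wt R:=\sigma_n\wt R_\circ .
\end{equation*}

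Non-negativity of $\wt R$ on $[0,\infty)$ is immediate since $\sigma_n>0$ and $\wt R_\circ\ge 0$. The crux is $\wt Q\ge 0$ on $[0,\infty)$. Where $\wt P_\circ(y)\ge 0$ this is clear, as $q_n\wt P_\circ,\sigma_n,\wt Q_\circ$ are all $\ge 0$; and on any interval where $\wt P_\circ(y)<0$ — which is necessarily contained in $[0,t_{n-2}^2)\subset[0,t_{n-1}^2)$, because $\wt P_\circ>0$ for $y>t_{n-2}^2$ — the identity $(r^2-y)\wt Q_\circ=\wt R_\circ-\wt P_\circ$ forces $\wt Q_\circ(y)\ge|\wt P_\circ(y)|/(r^2-y)$, while $\sigma_n(y)=(y-t_{n-1}^2)(y-t_n^2)+(r^2-y)q_n\ge(r^2-y)q_n$ there (both factors $y-t_{n-1}^2$, $y-t_n^2$ being negative), whence multiplying and adding $q_n\wt P_\circ(y)$ yields $\wt Q(y)\ge q_n\big(\wt P_\circ(y)+|\wt P_\circ(y)|\big)=0$. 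Finally, tracking leading terms through the recursion — using $\deg\sigma_n=2$, $q_n$ constant, $\deg\wt P_\circ=n-2$, and that all relevant leading coefficients stay positive, so no cancellation occurs — gives exactly the stated degrees for $\wt Q$ and $\wt R$, and hence for $Q$ and $R$ after $y=x^2$.

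The genuine obstacle I anticipate is the choice of \emph{which} two roots to remove. Peeling off the two largest roots $t_{n-1}^2$, $t_n^2$ is precisely what simultaneously (a) makes the quadratic remainder $\sigma_n$ strictly positive, handing us $\wt R\ge 0$ for free, and (b) pushes every sign-change interval of $\wt P_\circ$ to the left of $t_{n-1}^2$, where the inequality $\sigma_n(y)\ge(r^2-y)q_n$ holds — and this inequality is exactly what absorbs the possibly negative term $q_n\wt P_\circ$ in the formula for $\wt Q$. Removing a single root at a time, or the smallest roots instead, leaves the remainder sign-indefinite and the scheme breaks down.
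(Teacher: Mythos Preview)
Your argument is correct. The change of variables $y=x^2$, the two base cases, the recursion $\wt Q=q_n\wt P_\circ+\sigma_n\wt Q_\circ$, $\wt R=\sigma_n\wt R_\circ$, the sign analysis for $\wt Q$ on the region where $\wt P_\circ<0$, and the degree bookkeeping all check out.

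It is, however, a genuinely different route from the paper's. The paper inducts by peeling off \emph{one} root at a time (going $N\to N-1$), but it regroups the product differently: writing $(y-t_N^2)=(y-r^2)+(r^2-t_N^2)$ and $\wt P_\circ=(y-r^2)\wt Q_\circ+\wt R_\circ$, it expands the full product and collects terms as
\[
\wt Q_N=\wt R_\circ+(r^2-t_N^2)\,\wt Q_\circ,\qquad \wt R_N=(y-r^2)^2\,\wt Q_\circ+(r^2-t_N^2)\,\wt R_\circ,
\]
which are \emph{both} manifestly non-negative as sums of non-negative terms---no case analysis needed. Your scheme instead substitutes the inductive decomposition into only one factor, which keeps $\wt R=\sigma_n\wt R_\circ$ in a pleasant factored form (a product of strictly positive quadratics times the base remainder) at the price of having $\wt P_\circ$ appear in $\wt Q$ and forcing the sign argument. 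So your closing remark that ``removing a single root at a time \ldots\ leaves the remainder sign-indefinite and the scheme breaks down'' is not accurate as stated: it is your particular regrouping that would fail for a single root, but the paper's alternative grouping makes the one-root step work, and in fact more cheaply. Both approaches yield the stated degrees.
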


\begin{proof} We prove this by induction on $n$. At each step we show how to construct the polynomials $Q = Q_n$ and $R = R_n$ (it should be clear to the reader that these polynomials depend not only on $n$ but on $t_1, t_2, \ldots, t_n$ and $r$ as well). If $n=1$ and $0<t_1<r$, note that 
    \begin{equation*}
        P(x)=x^2-t_1^2=(x^2-r^2)\cdot 1+(r^2-t_1^2).
    \end{equation*}
    In this case we can take $Q_1(x):=1$ and $R_1(x):=r^2-t_1^2$, which are polynomials of degree 0.
    
 \smallskip
 
Now let $n=2$ and $0<t_1<t_2<r$. Since $P(x)=(x^2-t_1^2)(x^2-t_2^2)$, we can apply the previous case to each of the factors as follows
    \begin{equation*}
    \begin{split}
        P(x)&=\big((x^2-r^2) \,Q_1(x)+R_1(x)\big)\left((x^2-r^2)\cdot 1+(r^2-t_2^2)\right)\\
        &=(x^2-r^2)\left\{R_1(x)+(r^2-t_2^2)\,Q_1(x)\right\}+\left\{(x^2-r^2)^2 \,Q_1(x)+(r^2-t_2^2)\,R_1(x)\right\}.\\
    \end{split}
    \end{equation*}
In this case we can take $Q_2(x):=R_1(x)+(r^2-t_2^2)\,Q_1(x)$ and $R_2(x):=(x^2-r^2)^2 \,Q_1(x)+(r^2-t_2^2)\,R_1(x)$. Note that since ${\rm deg}(Q_1)={\rm deg}(R_1)=0$ we have ${\rm deg}(Q_2)=0$ and ${\rm deg}(R_2)=4$.   

\smallskip
    
    Assume that the assertion holds for all $n\le N-1$, for some integer $N\ge 3$. Consider the case $n=N$, with $0<t_1<t_2<\ldots<t_{N-1}<t_{N}<r$ and write 
    \begin{equation}\label{20240509_14:36}
        P(x)=(x^2-t_{N}^2)\prod_{k=1}^{N-1}(x^2-t_k^2).
    \end{equation}
    By the induction hypothesis, there exist even polynomials $Q_{N-1}(x)$ and $R_{N-1}(x)$, both non-negative on $\R$, such that
    \begin{equation}\label{20240509_14:37}
        \prod_{k=1}^{N-1}(x^2-t_k^2)=(x^2-r^2)\,Q_{N-1}(x) +R_{N-1}(x).
    \end{equation}
    On the other hand, the factor $(x^2-t_N^2)$ falls into the scope of the case $n=1$; i.e., 
    \begin{equation}\label{20240509_14:38}
    x^2-t_N^2=(x^2-r^2)\cdot 1+(r^2-t_N^2).
    \end{equation} 
    Combining \eqref{20240509_14:36}, \eqref{20240509_14:37} and \eqref{20240509_14:38} we get
    \begin{equation*}
    \begin{aligned}
        P(x)&= \big((x^2-r^2)\,Q_{N-1}(x) +R_{N-1}(x)\big)\,\big((x^2-r^2)\cdot 1+(r^2-t_N^2)\big)\\
        &=(x^2-r^2)\left\{R_{N-1}(x)+(r^2-t_N^2)\,Q_{N-1}(x)\right\}+\left\{(x^2-r^2)^2 \,Q_{N-1}(x)+(r^2-t_N^2)\,R_{N-1}(x)\right\}\\
        &=(x^2-r^2)\,Q_{N}(x) +R_{N}(x),
    \end{aligned}
    \end{equation*} 
    where $Q_{N}(x):=R_{N-1}(x)+(r^2-t_N^2)\,Q_{N-1}(x)$ and $R_{N}(x):=(x^2-r^2)^2 \,Q_{N-1}(x)+(r^2-t_N^2)\,R_{N-1}(x)$. Note that $Q_N$ and $R_N$ are both even and non-negative on $\R$. Also, by the induction hypothesis,
$${\rm deg}(Q_{N})=\max\big\{{\rm deg}(Q_{N-1}), {\rm deg}(R_{N-1})\big\}={\rm deg}(R_{N-1})=
\begin{cases}
 2N-4, & \textnormal{ if } N \textnormal{ is even;} \\
                2N-2, & \textnormal{ if } N \textnormal{ is odd;}
\end{cases}$$ 
and
$${\rm deg}(R_{N})=\max\big\{4+{\rm deg}(Q_{N-1}), {\rm deg}(R_{N-1})\big\}=4+{\rm deg}(Q_{N-1})=
\begin{cases}
 2N, & \textnormal{ if } N \textnormal{ is even;} \\
                2N-2, & \textnormal{ if } N \textnormal{ is odd.}
\end{cases}$$
This concludes the proof.
    \end{proof}
    
With the aid of Proposition \ref{Lem_polynomial} we establish the following reduction in the search for extremizers in problem (EP1). 
 
 \begin{proposition}\label{Prop11_restriction}
 Let $F \in \mathcal{E}(\mu; \delta)$ be an even function and set $r := r(F)$. Then there exists $G \in \mathcal{E}(\mu; \delta)$ of the form $G(z) = (z^2 - r^2) H(z)$, where $H$ is an even and real entire function of exponential type at most $\delta$ that is non-negative on $\R$. In particular, note that $r(G) = r(F)$. 
 \end{proposition}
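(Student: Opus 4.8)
The plan is to start from an even $F \in \mathcal{E}(\mu; \delta)$ with $r := r(F)$ and reduce it, in several approximation steps, to a function of the desired form $G(z) = (z^2 - r^2)H(z)$ with $H$ even, real entire, of exponential type at most $\delta$, and non-negative on $\R$. The key structural fact to exploit is that, since $F$ is even and eventually non-negative with $r(F) = r$, it changes sign (counted with multiplicity or at least as genuine sign changes) only at finitely many points in $(0, r]$, say $0 < t_1 < t_2 < \cdots < t_n = r$ (if $r$ were not itself a zero, i.e., $F$ vanishes to even order or not at all at $r$, one handles this by noting $F$ is already non-negative past a slightly smaller point, or by perturbing $r$ downward; in any case the last genuine sign change is at some $t_n \le r$). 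I would write $F(z) = P(z)\,F_0(z)$ where $P(z) = \prod_{k=1}^n (z^2 - t_k^2)$ collects the sign-change zeros on the positive axis (and their mirror images), and $F_0$ is real entire, even, of exponential type at most $\delta$, and — crucially — non-negative on all of $\R$.

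Next I would apply Proposition \ref{Lem_polynomial} to the polynomial $P$ with the value $r \ge t_n$: this gives even polynomials $Q, R$, both non-negative on $\R$, with $P(x) = (x^2 - r^2)Q(x) + R(x)$. Multiplying through by $F_0$ yields
\begin{equation*}
F(z) = (z^2 - r^2)\,Q(z)\,F_0(z) + R(z)\,F_0(z).
\end{equation*}
Here $Q\cdot F_0$ and $R \cdot F_0$ are real entire, even, of exponential type at most $\delta$, and non-negative on $\R$. The natural candidate is then to discard the remainder term and set $H := Q \cdot F_0$ and $G(z) := (z^2 - r^2)H(z)$. By construction $G$ is real entire, even, of exponential type at most $\delta$, and since $R\cdot F_0 \ge 0$ on $\R$ we have $G(x) = F(x) - R(x)F_0(x) \le F(x)$ pointwise on $\R$; in particular $G$ is eventually non-negative (as $F$ is and $x^2 - r^2 > 0$ for $|x| > r$ forces $H \ge 0$ there, hence everywhere by evenness and the sign pattern) with $r(G) = r$, and $\int_\R G\, \d\mu \le \int_\R F\, \d\mu \le 0$ by the pointwise bound and non-negativity of $\mu$. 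The only remaining point is integrability: $G \in L^1(\R, \mu)$, which again follows from $|G| \le |F| + |R F_0|$ on the region where both are non-negative together with local finiteness — but one must be slightly careful since $G$ could in principle be negative somewhere on $(-r, r)$, so I would instead bound $|G(x)| \le \max\{F(x), R(x)F_0(x)\} + (\text{something controlled near the bounded set } [-r,r])$, using that everything is continuous and $[-r,r]$ has finite $\mu$-measure.

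The main obstacle I anticipate is the initial factorization $F = P \cdot F_0$ with $F_0 \ge 0$ everywhere, i.e., correctly isolating exactly the zeros responsible for sign changes. Real entire functions of exponential type that are eventually non-negative need not factor as nicely as one might hope: zeros on the real axis of even order do not produce sign changes, and one must argue (via the eventual non-negativity plus the Hadamard/Krein factorization, or simply by pairing up the real zeros in $(0,\infty)$ according to parity and combining with the complex-conjugate pairs of non-real zeros) that after dividing out the odd-order real zeros in $(0, r]$ — of which there are finitely many because $F \not\equiv 0$ has locally finitely many zeros — the quotient $F_0$ is genuinely non-negative on all of $\R$ and still real entire of exponential type at most $\delta$. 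A clean way around subtleties is: let $t_1 < \cdots < t_n$ be the points in $(0,\infty)$ where $F$ changes sign; each such $t_k \le r$, and $t_n = r$ or else $F \ge 0$ on a smaller radius (contradicting minimality of nothing, but then we just take that smaller value and note the conclusion with $r(F)$ still holds since we can always multiply back); define $F_0 = F / \prod_k (z^2 - t_k^2)$, which is entire since each $t_k$ is a zero of $F$ of odd order $\ge 1$ so at least order one is removable, and then check $F_0$ has constant sign on each interval of $\R \setminus \{0\}$ determined by its remaining real zeros, all of even order, hence $F_0$ has constant sign, which must be $+$ since $F \ge 0$ and $\prod(x^2 - t_k^2) \ge 0$ for large $x$. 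Exponential type is preserved under dividing by a polynomial. Once this factorization is in hand, the rest is the mechanical application of Proposition \ref{Lem_polynomial} and the pointwise comparison described above.
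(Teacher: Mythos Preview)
Your approach is correct and follows essentially the same route as the paper: factor out the sign-change zeros as an even polynomial, apply Proposition~\ref{Lem_polynomial} to split it, and keep the piece carrying a single $(x^2-r^2)$ factor, using the pointwise comparison $G \le F$ for the integral condition and local finiteness of $\mu$ on $[-r,r]$ for integrability. The only cosmetic discrepancy is that the paper pulls the factor $(x^2-r^2)$ out of the polynomial \emph{before} invoking Proposition~\ref{Lem_polynomial} (so that its hypothesis $r > t_n$ is strictly met), whereas you include $t_n = r$ in $P$ and then apply the lemma with $r = t_n$; separating that last factor first, exactly as the paper does, removes this hiccup and makes your $G = (x^2 - r^2)QF_0$ coincide with the paper's $G = (x^2 - r^2)RF_1$.
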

\begin{proof}
Since $F$ is even and belongs to $\mathcal{E}(\mu; \delta)$, it has only finitely many sign changes on the positive axis. If it has only one sign change, we are done. If not, we label these as $0 < t_1 < t_2 < \ldots < t_n < r$ and write
\begin{equation}\label{20240509_18:22}
F(x) = \left(\prod_{k=1}^n (x^2 - t_k^2) \right) (x^2 - r^2) \,F_1(x)\,,
\end{equation}
where $F_1$ is an even and real entire function of exponential type at most $\delta$ that is non-negative on $\R$. We apply Proposition \ref{Lem_polynomial} to the product from $k=1$ to $n$ in \eqref{20240509_18:22} and observe that 
\begin{align}\label{20240509_20:26}
\begin{split}
F(x) & = \Big((x^2-r^2)Q(x) +R(x)\Big)(x^2 - r^2) \,F_1(x)\\
& = (x^2-r^2)^2 \,Q(x) F_1(x) + (x^2 - r^2) R(x)F_1(x)\\
& \geq (x^2 - r^2) R(x)F_1(x)
\end{split}
\end{align}
for all $x \in \R$. Let $G(z) := (z^2 - r^2) R(z)F_1(z)$. By Proposition \ref{Lem_polynomial} we have ${\rm deg}(R) \leq 2n$ and hence
\begin{equation}\label{20240509_20:25}
|R(x)| \lesssim  \left|\prod_{k=1}^n (x^2 - t_k^2) \right| 
\end{equation}
for $|x|$ large. Since $F \in L^1(\R, \mu)$, from \eqref{20240509_20:25} and the fact that $\mu$ is locally finite we conclude that $G \in L^1(\R, \mu)$. Moreover, from \eqref{20240509_20:26} we plainly have
\begin{equation*}
0 \geq\int_{\R} F(x) \,\d \mu(x) \geq \int_{\R} G(x) \,\d \mu(x).
\end{equation*}
Hence, $G \in  \mathcal{E}(\mu; \delta)$, as desired.
\end{proof}

\subsection{Proof of Theorem \ref{dB_Thm2}} We are now in position to present our first proof of Theorem \ref{dB_Thm2}. Assume now that we are considering a measure $\mu$ and a Hermite-Biehler function $E$ that verify the conditions (C1), (C2), (C3) and (C4).

\subsubsection{Reduction to an $L^2$-extremal problem} \label{Sub-Sec_reduction_L2} As already observed in the proof of Theorem \ref{Thm_dimension_shifts}, if we start with $F \in \mathcal{E}(\mu; 2\tau(E))$, we can consider $\widetilde{F}(z) := \tfrac12\{F(z) + F(-z)\}$ that verifies $\widetilde{F} \in \mathcal{E}(\mu; 2\tau(E))$ (since $\mu$ is even) and $r\big(\widetilde{F}\, \big) \leq r(F)$. Hence, for the purposes of finding the sharp constant, we can restrict our search to even functions. We have seen in Proposition \ref{Prop11_restriction} that we can further restrict our search to functions $G \in \mathcal{E}(\mu; 2\tau(E))$ of the form $ G(z) = (z^2 - r^2) H(z)$, with $H$ even, real entire of exponential type at most $2 \tau(E)$, and non-negative on $\R$. Since $x^2H(x) \in L^1(\R, \mu)$ (recall that $\mu$ is locally finite), condition (C4) yields
\begin{align}
\begin{split}\label{20240510_11:54}
0 \geq \int_{\R} G(x) \,\d\mu(x) & = \left( \int_{\R} x^2H(x) \,\d\mu(x)\right) -  r^2\left( \int_{\R} H(x)\, \d\mu(x)\right)\\
& = \left( \int_{\R} x^2H(x)\, |E(x)|^{-2} \,\d x\right) -  r^2\left( \int_{\R} H(x)\, |E(x)|^{-2} \,\d x\right)\\
& = \int_{\R} G(x) \,|E(x)|^{-2} \,\d x.
\end{split}
\end{align}
Since $H \in L^1(\R, |E(x)|^{-2}\,\d x)$ (from condition (C4)), we may use Lemma \ref{nonnegl1-tohe} to decompose $H = UU^*$, with $U \in \mathcal{H}(E)$. Hence, as in \eqref{20240510_11:54}, the fact that $0 \geq \int_{\R} G(x) \,\d\mu(x)$ is equivalent to the fact that 
$$r^2 \geq \frac{ \int_{\R} x^2\,|U(x)|^2\, |E(x)|^{-2} \,\d x} { \int_{\R}|U(x)|^2\, |E(x)|^{-2} \,\d x} = \frac{\|z \, U\|_{\mc{H}(E)}^2}{\|U\|_{\mc{H}(E)}^2}.$$

\smallskip

As our task is to minimize $r$, we have reduced our extremal problem (EP1) to the following extremal problem related to multiplication by $z$ in a general de Branges space.

\smallskip

\noindent {\it Extremal Problem 2} (EP2): Let $E$ be a Hermite-Biehler function. Find
\begin{equation*}
\mathbb{M}_z(E) := \inf_{0 \neq U \in \mc{H}(E)} \frac{\|z \, U\|_{\mc{H}(E)}}{\|U\|_{\mc{H}(E)}}.
\end{equation*}
It turns out that this problem has recently been addressed in the work of the first author with Chirre and Milinovich \cite[Theorem 14]{CChiM}, for $E$ verifying conditions (C2) and (C3) (note that condition (C1) was already used above when we invoked Lemma \ref{nonnegl1-tohe}). We discuss its solution in the next subsection.

\subsubsection{Solution of the extremal problem {\rm (EP2)}} \label{SS_3.2.2}We recall here the result in \cite[Theorem 14]{CChiM} and briefly reproduce its proof for the reader's convenience.  An extension of this result, computing the analogous infimum related to multiplication by $z^k$, $k \in \N$, was obtained in \cite{CGOOORS}.
\begin{proposition}[cf. \cite{CChiM}]\label{Prop_CCM}
Let $E$ be a Hermite-Biehler function verifying conditions {\rm (C2)} and {\rm (C3)}. Let $A := \frac{1}{2} \big(E + E^*\big)$ and let $\xi_1$ be the smallest positive real zero of $A$. Then 
\begin{equation*}
\mathbb{M}_z(E) = \xi_1.
\end{equation*}
The unique extremizer (up multiplication by a non-zero complex constant) is
\begin{equation}\label{20240707_13:15}
U(z) = \frac{A(z)}{z^2 - \xi_1^2}.
\end{equation}
\end{proposition}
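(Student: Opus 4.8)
The plan is to exploit the orthogonal basis structure of $\mc{H}(E)$ provided by the sampling formulas \eqref{20210809_11:01}, using a phase function $\alpha$ adapted to the zeros of $A$. First I would observe that $A = T_{\pi/2}$ in the notation of \eqref{20240713_13:17}, so the zeros of $A$ are exactly the solutions of $\varphi(\xi) \equiv \pi/2 \ (\mathrm{mod}\ \pi)$, and, by (C3), these are real and simple. Since (C2) forces $A$ to be even, its zeros come in symmetric pairs $\pm\xi_1, \pm\xi_2, \ldots$ with $0 < \xi_1 < \xi_2 < \cdots$, and $A$ has no zero at the origin. One checks that $T_{\pi/2} = A \notin \mc{H}(E)$: indeed $A$ has exponential type $\tau(E)$ and $A/E$ tends to a nonzero constant direction, so $\int_\R |A(x)|^2 |E(x)|^{-2}\,\d x = \infty$. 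Hence, by the de Branges orthogonality theorem, $\Gamma_{\pi/2} = \{K(\xi,\cdot) : A(\xi) = 0\}$ is an orthogonal basis of $\mc{H}(E)$, and every $U \in \mc{H}(E)$ satisfies \eqref{20210809_11:01} with this set of nodes.

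Next I would compute, for $U \in \mc{H}(E)$ with $z U \in \mc{H}(E)$, the Rayleigh quotient $\|z U\|^2 / \|U\|^2$ by expanding both $U$ and $zU$ in the basis $\Gamma_{\pi/2}$. The key point is that the nodes $\xi$ are precisely the zeros of $A$, so evaluating $U$ at these nodes picks out the sampled values $U(\xi)$, and evaluating $zU$ at the same nodes gives $\xi\, U(\xi)$. Therefore \eqref{20210809_11:01} yields
\begin{equation*}
\|U\|_{\mc{H}(E)}^2 = \sum_{A(\xi)=0} \frac{|U(\xi)|^2}{K(\xi,\xi)} \qquad \text{and} \qquad \|z U\|_{\mc{H}(E)}^2 = \sum_{A(\xi)=0} \frac{\xi^2\,|U(\xi)|^2}{K(\xi,\xi)},
\end{equation*}
provided $zU \in \mc{H}(E)$. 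Comparing these two sums term by term and using that $\xi^2 \geq \xi_1^2$ for every zero $\xi$ of $A$ gives $\|z U\|_{\mc{H}(E)}^2 \geq \xi_1^2\,\|U\|_{\mc{H}(E)}^2$, hence $\mathbb{M}_z(E) \geq \xi_1$. Equality forces $U(\xi) = 0$ for all zeros $\xi$ of $A$ with $|\xi| > \xi_1$, so the expansion collapses to $U(z) = c_+ K(\xi_1,z) + c_- K(-\xi_1,z)$; using the explicit formula \eqref{Def_Rep_Ker} for $K$ together with $A(\pm\xi_1) = 0$ one simplifies this linear combination to a constant multiple of $A(z)/(z^2 - \xi_1^2)$, which is the claimed extremizer \eqref{20240707_13:15}. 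For the upper bound it then remains to verify that this function $U(z) = A(z)/(z^2-\xi_1^2)$ actually lies in $\mc{H}(E)$ and that $zU \in \mc{H}(E)$ as well: both are entire (the zero of $A$ at $\pm\xi_1$ cancels the pole), both have exponential type $\tau(E)$, and the integrability \eqref{20240508_18:22} holds because $A(x)/(x^2-\xi_1^2)$ decays like $|E(x)|$ times $x^{-2}$ near infinity and $xA(x)/(x^2-\xi_1^2)$ like $|E(x)|$ times $x^{-1}$; then one invokes Lemma \ref{Lem_9_charac_dB_spaces_ET}. A direct computation of the Rayleigh quotient for this $U$ confirms it equals $\xi_1^2$, closing the argument.

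I expect the main obstacle to be the justification that one may legitimately apply the sampling identity \eqref{20210809_11:01} to $zU$ — that is, controlling when $zU \in \mc{H}(E)$ and handling the infimum over the (possibly large) class where it might fail. The clean resolution is to note that if $zU \notin \mc{H}(E)$ then $\|zU\|_{\mc{H}(E)} = \infty$ and the quotient is trivially $\geq \xi_1$, so the infimum is unaffected; the content is entirely in the case $zU \in \mc{H}(E)$, where the term-by-term comparison of the two orthogonal expansions is the heart of the matter. A secondary technical point is the uniqueness claim, which requires tracking the equality case carefully: equality in $\sum \xi^2 |U(\xi)|^2/K(\xi,\xi) \geq \xi_1^2 \sum |U(\xi)|^2/K(\xi,\xi)$ holds if and only if $U$ is supported on the two extreme nodes $\pm\xi_1$, and then evenness of $A$ (hence of the optimal $U$ after symmetrization, or directly from the structure) pins down the extremizer up to a complex scalar.
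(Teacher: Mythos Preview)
Your overall strategy---expand both $U$ and $zU$ in the orthogonal system $\Gamma_{\pi/2}=\{K(\xi,\cdot):A(\xi)=0\}$ and compare term by term---is precisely the paper's approach, and the lower bound $\mathbb{M}_z(E)\geq\xi_1$ goes through as you say. However, two genuine gaps remain.

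First, your dismissal of the possibility $A\in\mc{H}(E)$ is unjustified: on $\R$ one has $|A(x)/E(x)|=|\cos\varphi(x)|$, which oscillates rather than ``tending to a nonzero constant direction,'' so your argument for $\int_\R|A|^2|E|^{-2}\,\d x=\infty$ does not work. The paper treats $A\in\mc{H}(E)$ as a separate exceptional case, invoking \cite[Theorem~29]{Branges} to identify $\mc{X}(E)^\perp=\mathrm{span}\{A\}$ and then rerunning the comparison. Second, and more seriously, your uniqueness argument is incomplete. Equality in the sampling inequality only places $U$ in the \emph{two}-dimensional span of $K(\pm\xi_1,\cdot)$; the claim that this ``simplifies to a constant multiple of $A(z)/(z^2-\xi_1^2)$'' is false without a further constraint (for general $c_\pm$ the combination is $A(z)$ times a linear function of $z$ over $z^2-\xi_1^2$). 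The paper's key step is the identity \eqref{20210913_17:19}, which shows that for $U$ in this span,
\[
zU(z)=\frac{A(z)B(\xi_1)}{\pi K(\xi_1,\xi_1)}\big(U(-\xi_1)-U(\xi_1)\big)\;+\;\big(\text{element of }\mc{H}(E)\big),
\]
so that $zU\in\mc{H}(E)$ forces $U(\xi_1)=U(-\xi_1)$ precisely because $A\notin\mc{H}(E)$ (in the exceptional case the condition $zU\perp A$ plays the analogous role). Your fallback ``evenness after symmetrization'' does not salvage uniqueness: it only produces \emph{some} even extremizer. A smaller point: your upper-bound verification that $U,zU\in\mc{H}(E)$ invokes Lemma~\ref{Lem_9_charac_dB_spaces_ET}, which requires (C1), not assumed here; the paper reads membership directly off \eqref{20210913_17:12}--\eqref{20210913_17:19}, since $K(\pm\xi_1,\cdot)\in\mc{H}(E)$ by construction.
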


\noindent {\sc Remark:} If $A$ has no zeros then, from the discussion in \S \ref{dB_discussion_subs}, the space $\mc{H}(E)$ has dimension at most $1$ and there are no functions $U \in \mc{H}(E)$ such that $zU \in \mc{H}(E)$.

\begin{proof} Let $\mc{X}(E) := \{U \in \mc{H}(E)\, : \,  zU \in \mc{H}(E)\}$, and recall that $B = \frac{i}{2} \big(E - E^*\big)$. The proof considers two cases: when $A \notin \mc{H}(E)$ (a typical situation), and when $A \in \mc{H}(E)$ (an exceptional situation).

\smallskip 

We first consider the case $A \notin \mc{H}(E)$. In this case, if $0 \neq U \in \mc{X}(E)$, from \eqref{20210809_11:01} we have 
\begin{align}\label{20210909_23:48}
\|U\|_{\mc{H}(E)}^2  = \sum_{A(\xi) = 0}  \frac{\big| U(\xi)\big|^2 }{K(\xi, \xi)} \leq \frac{1}{\xi_1^2} \sum_{A(\xi) = 0}  \frac{|\xi|^2\,\big|U(\xi)\big|^2 }{K(\xi, \xi)} = \frac{1}{\xi_1^2}\, \|z U\|_{\mc{H}(E)}^2. 
\end{align}
This shows that $\mathbb{M}_z(E)  \geq \xi_1$. In order to have equality in \eqref{20210909_23:48} one must have $U(\xi) = 0$ for each zero $\xi$ of $A$ with $\xi \neq \pm \xi_1$. By the interpolation formula in \eqref{20210809_11:01} (recall that $K(\xi_1, \xi_1) = K(-\xi_1, -\xi_1)$ in our setup) we get
\begin{align}\label{20210913_17:12}
U(z) & = \frac{1}{K(\xi_1, \xi_1)} \big( U(\xi_1)\, K(\xi_1, z) +  U(-\xi_1) \, K(-\xi_1, z)\big).
\end{align}
From \eqref{20210913_17:12} and \eqref{Def_Rep_Ker} one can write 
\begin{equation}\label{20210913_17:19}
zU(z) = \frac{A(z) B(\xi_1)}{\pi K(\xi_1, \xi_1)} \big( U(-\xi_1) -U(\xi_1)\big) + \frac{\xi_1 \, U(\xi_1)}{K(\xi_1, \xi_1)} K(\xi_1, z) + \frac{(-\xi_1) \, U(-\xi_1)}{K(\xi_1, \xi_1)} K(-\xi_1, z).
\end{equation}
From \eqref{20210913_17:19}, one now observes that $zU \in \mc{H}(E)$ if and only if $U(\xi_1) = U(-\xi_1)$. This leads to the conclusion that $\mathbb{M}_z(E) =  \xi_1$, with extremizers as in \eqref{20240707_13:15}.

\smallskip

We now consider the exceptional case $A \in \mc{H}(E)$. By \cite[Theorem 29]{Branges} a function $G \in \mc{H}(E)$ is  orthogonal to the subspace $\mc{X}(E)$ if and only if it is of the form $G(z) = \alpha A(z) + \beta B(z)$ for constants $\alpha,\beta \in \C$. If there is such a function $G$ with $\beta \neq 0$, then $B \in  \mc{H}(E)$, and hence $E \in \mc{H}(E)$, contradicting \eqref{20240508_18:22}. Therefore $\mc{X}(E)^{\perp} = {\rm span}\{A\}$. Recalling the discussion at the end of \S \ref{dB_discussion_subs}, that $T_{\pi/2} = A$ and $\Gamma_{\pi/2} \cup \{A\}$ is an orthogonal basis of $\mc{H}(E)$, if $0 \neq U \in \mc{X}(E)$ we have
\begin{align}\label{20210913_16:40}
\|U\|_{\mc{H}(E)}^2  = \sum_{A(\xi) = 0}  \frac{\big| U(\xi)\big|^2 }{K(\xi, \xi)} \leq \frac{1}{\xi_1^2} \sum_{A(\xi) = 0}  \frac{|\xi|^2\,\big|U(\xi)\big|^2 }{K(\xi, \xi)} \leq \frac{1}{\xi_1^2}\, \|z U\|_{\mc{H}(E)}^2\,,
\end{align}
which shows that $\mathbb{M}_z(E)  \geq \xi_1$. Equality in \eqref{20210913_16:40} occurs if and only if $U(\xi) = 0$ for each zero $\xi$ of $A$ with $\xi \neq \pm \xi_1$, and $zU \perp A$. As we are assuming that $U \in \mc{X}(E)$, we have that $U \perp A$, which implies that $U$ has a representation as in \eqref{20210913_17:12}. From \eqref{20210913_17:19} one sees that $zU \perp A$ if and only if $U(\xi_1) = U(-\xi_1)$, which leads us to the conclusion that $\mathbb{M}_z(E) =  \xi_1$, with extremizers as in \eqref{20240707_13:15}.
\end{proof}

In light of the discussion in \S \ref{Sub-Sec_reduction_L2} and Proposition \ref{Prop_CCM}, we arrive at the conclusion that 
\begin{equation*}
\mathbb{E}(\mu; \,2\tau(E)) = \mathbb{M}_z(E) = \xi_1,
\end{equation*}
as desired.

\subsubsection{Classification of the extremizers} The discussion in \S \ref{Sub-Sec_reduction_L2} and \S \ref{SS_3.2.2} also leads us to the following conclusion: if $G \in \mathcal{E}(\mu; 2\tau(E))$ is an even extremizer that has only one sign change on the positive real axis at $\xi_1$, then $G$ must be of the form \eqref{20240506_18:44}.

\smallskip

Observe that any even function $F \in \mathcal{E}(\mu; 2\tau(E))$ verifies 
\begin{equation}\label{20240707_15:39}
\int_{\R} F(x) \,\d\mu(x) = \int_{\R} F(x) \,|E(x)|^{-2}\,\d x.
\end{equation}
This follows from condition (C4), by writing $F$ as a difference of two functions that are non-negative on $\R$ and belong to $L^1(\R, \mu)$ (looking at the factorization \eqref{20240509_18:22}, this plainly follows by writing an even polynomial as a difference of two polynomials that are non-negative on $\R$). Now assume that $F \in \mathcal{E}(\mu; 2\tau(E))$ is an {\it even extremizer that has more than one sign change on the positive real axis} (i.e. $n \geq 1$ in the factorization \eqref{20240509_18:22}). One then notices that the inequality $F(x) \geq G(x)$ in \eqref{20240509_20:26} is strict for a.e. $x \in \R$. In light of \eqref{20240707_15:39}, we are led to the fact that 
\begin{align}\label{20240707_15:48}
0 \geq \int_{\R} F(x) \,\d\mu(x) = \int_{\R} F(x) \,|E(x)|^{-2}\,\d x > \int_{\R} G(x) \,|E(x)|^{-2}\,\d x = \int_{\R} G(x) \,\d\mu(x).
\end{align}
Recall that $G$ has the form $G(z) = (z^2 - \xi_1^2) H(z)$, with $H$ non-negative on $\R$. In the situation \eqref{20240707_15:48}, the function $G_{\varepsilon}(z) = (z^2 - (\xi_1 - \varepsilon)^2) H(z)$ would belong to $\mathcal{E}(\mu; 2\tau(E))$ for small $\varepsilon$, contradicting the minimality of $\xi_1$. We conclude that the unique even extremizer is indeed the one given by  \eqref{20240506_18:44}.

\smallskip

Finally, we need to show that all extremizers must be even. If $F \in \mathcal{E}(\mu; 2\tau(E))$ is a generic extremizer, we have seen that $\widetilde{F}(z) := \tfrac12\{F(z) + F(-z)\}$ is an even extremizer, and hence given by \eqref{20240506_18:44}. So, we want to show that if 
\begin{equation}\label{20240707_16:04}
\frac12\big\{F(z) + F(-z)\big\} =  \frac{A(z)^2}{z^2 - \xi_1^2}\,,
\end{equation}
then $F(z)=  \frac{A(z)^2}{z^2 - \xi_1^2}$. Since $F$ is an extremizer, $F(x) \geq 0$ for $|x| \geq \xi_1$. Then, by \eqref{20240707_16:04}, for each zero $\xi$ of $A$ we must have $F(\xi) = 0$ (with even multiplicity if $|\xi|>\xi_1$). This tells us that $F$ factors as 
\begin{equation}\label{20240707_16:33}
F(z) = \frac{A(z)^2}{z^2 - \xi_1^2} \, R(z)\,,
\end{equation}
with $R$ real entire and non-negative on $\{x \in \R \ ; \ |x| \geq \xi_1\}$. Then \eqref{20240707_16:04} and \eqref{20240707_16:33} imply that
$$\frac12\big\{R(z) + R(-z)\big\} = 1,$$
and we find that $R$ is bounded on $\R$. Recall that 
\begin{align*}
K(\xi_1, z) = -\frac{A(z)B(\xi_1)}{\pi (z - \xi_1)}  \ \ \ {\rm and}\  \ \ K(-\xi_1, z) = \frac{A(z)B(\xi_1)}{\pi (z + \xi_1)},
\end{align*}
and by  Lemma \ref{Lem_9_charac_dB_spaces_ET} we have $\tau(K(\pm \xi_1, \cdot) ) = v(K(\pm \xi_1, \cdot) )  = \tau(E)$. Hence
\begin{align}\label{20240713_12:44}
v\left(\frac{A(z)^2}{z^2 - \xi_1^2}\right) = v\left(\frac{A(z)}{z - \xi_1}\right) + v\left(\frac{A(z)}{z + \xi_1}\right) = 2\tau(E).
\end{align}
Since $F$ is real entire, by Lemma \ref{Lem_9_charac_dB_spaces_ET}, $v(F) = \tau(F) \leq 2\tau(E)$. Since $R \in \mathcal{B}$ as in Lemma \ref{theorem-krein}, from \eqref{20240707_16:33}, \eqref{20240713_12:44}, and the additive property of the mean type, we arrive at
$$2\tau(E) \geq v(F) = v\left(\frac{A(z)^2}{z^2 - \xi_1^2}\right) + v(R) = 2\tau(E) + v(R)\,,$$
which leads us to the conclusion that $\tau(R) = v(R) = 0$. Now, an application of the Phr\'{a}gmen-Lindel\"{o}f principle \cite[Theorem 1]{Branges} tells us that $R$ has to be identically $1$, as desired. 

\subsection{Alternative proof of Theorem \ref{dB_Thm2}} This second proof makes use of certain quadrature formulas in our context of de Branges spaces. It is slightly more direct than our first proof (although we are going to invoke some elements from the proof of Proposition \ref{Prop_CCM}). On the other hand, the polynomial reduction used in the first proof has the advantage that it can also be used for general measures $\mu$. Here we again consider a measure $\mu$ and a Hermite-Biehler function $E$ that verify the conditions (C1), (C2), (C3) and (C4). We also assume that $A$ has at least one zero.

\subsubsection{Quadrature formulas} First observe that there exists a real entire function $M$ of exponential type at most $2\tau(E)$ that is strictly positive on $\R$ and verifies $M \in L^1(\R, |E(x)|^{-2}\d x)$ (which, by (C4), is equivalent to $M \in L^1(\R, \mu)$). Take, for instance,
\begin{equation*}
M(z) = K(\xi_1, z)^2 +K(0, z)^2 = \frac{ B(\xi_1)^2A(z)^2}{\pi^2 (z - \xi_1)^2} + \frac{ A(0)^2B(z)^2}{\pi^2 z^2}.
\end{equation*} 
Now, for each $F \in \mathcal{E}(\mu; 2\tau(E))$, there is a positive constant $c = c_F$ such that $F +c M$ is non-negative on $\R$. Then  $F +c M \in L^1(\R, \mu)$ and, by (C4), $F +c M \in L^1(\R, |E(x)|^{-2}\d x)$. In particular, $F \in L^1(\R, |E(x)|^{-2}\d x)$. Using (C4) again, we have
\begin{align}\label{20240707_18:26}
\begin{split}
\int_{\R} F(x) \,\d\mu(x) &= \int_{\R} \big(F(x) + cM(x)\big) \,\d\mu(x) - \int_{\R}  cM(x) \,\d\mu(x)\\
& = \int_{\R} \big(F(x) + cM(x)\big) \,|E(x)|^{-2}\,\d x - \int_{\R} cM(x) \,|E(x)|^{-2}\,\d x\\
& = \int_{\R} F(x) \,|E(x)|^{-2}\,\d x.
\end{split}
\end{align}
Using Lemma \ref{nonnegl1-tohe}, we can write $F +c M = UU^*$ and $cM = VV^*$ with $U, V \in \mc{H}(E)$, thus arriving at the representation
\begin{align}\label{20240707_18:11}
F = UU^* - VV^*.
\end{align}

\smallskip

For $\alpha \in \R$, let $T_{\alpha}$ be as in \eqref{20240713_13:17}. If $T_{\alpha} \notin \mc{H}(E)$, from \eqref{20240707_18:11} and \eqref{20210809_11:01} we get
\begin{align}\label{20240707_18:25}
\begin{split}
 \int_{\R} F(x) \,|E(x)|^{-2}\,\d x &=  \int_{\R} |U(x)|^2 \,|E(x)|^{-2}\,\d x - \int_{\R} |V(x)|^2 \,|E(x)|^{-2}\,\d x\\
 & = \sum_{T_{\alpha}(\xi) = 0} \frac{|U(\xi)|^2}{K(\xi, \xi)} -  \sum_{T_{\alpha}(\xi) = 0} \frac{|V(\xi)|^2}{K(\xi, \xi)}\\
 & = \sum_{T_{\alpha}(\xi) = 0} \frac{F(\xi)}{K(\xi, \xi)}.
 \end{split}
\end{align}
Hence, from \eqref{20240707_18:26} and \eqref{20240707_18:25}, we arrive at the quadrature formula
\begin{align}\label{20240707_18:39}
\int_{\R} F(x) \,\d\mu(x) = \sum_{T_{\alpha}(\xi) = 0} \frac{F(\xi)}{K(\xi, \xi)}.
\end{align}

\subsubsection{Finding the sharp constant} \label{20240807_18:07} Assume that there exists $F \in \mathcal{E}(\mu; 2\tau(E))$ such that $r(F) < \xi_1$.  Then there is an interval $(a,b) \subset (r(F), \xi_1)$ in which $F$ is strictly positive. As discussed in \S \ref{dB_discussion_subs}, choose $\alpha$ such that $T_{\alpha} \notin \mc{H}(E)$ and $T_{\alpha}$ has a zero $\zeta \in (a,b)$. Note that such a $T_{\alpha}$ will have no other zero in the interval $(-\xi_1, \xi_1)$ by the interlacing property of the zeros. This plainly contradicts \eqref{20240707_18:39}, since the left-hand side is non-positive by hypothesis, while the right-hand side is strictly positive. Hence, $\mathbb{E}(\mu; \,2\tau(E)) \geq \xi_1$.

\smallskip

In order to see that the function $F$ given by \eqref{20240506_18:44} verifies $\int_{\R} F(x) \,\d\mu(x) = 0$, we can argue as in the proof of Proposition \ref{Prop_CCM}. In the generic case when $A = T_{\pi/2} \notin \mc{H}(E)$ we can, alternatively, use \eqref{20240707_18:39} directly. This shows that  $\mathbb{E}(\mu; \,2\tau(E)) = \xi_1$.

\subsubsection{Classification of the extremizers} Let $F \in \mathcal{E}(\mu; 2\tau(E))$ be an extremizer for our problem. Arguing as in \S \ref{20240807_18:07}, we conclude that $F$ cannot take strictly positive values in the interval $(-\xi_1, \xi_1)$. In particular, since $F$ is continuous, we must have $F(\pm \xi_1) = 0$. This means that $F$ can be factored as $F(z)=(z^2 - \xi_1^2)H(z)$, with $H$ real entire, of exponential type at most $2\tau(E)$, and non-negative on $\R$. From condition (C4), we may use Lemma \ref{nonnegl1-tohe} to decompose $H = UU^*$, with $U \in \mc{H}(E)$. Then, arguing as in the proof of Proposition \ref{Prop_CCM} we arrive at the conclusion that $F$ must be given by \eqref{20240506_18:44}.

\section{Absence of sign uncertainty: proof of Theorem \ref{Thm3_negative_V2}}

\subsection{Density of polynomials} We start with the following auxiliary result.
\begin{lemma} \label{Den_Pol}Let $\mu$ be a locally finite, even and non-negative Borel measure on $\R$ such that 
\begin{equation}\label{20240907_10:07}
\int_{\R} e^{\varepsilon |x|} \,\d \mu(x) <\infty
\end{equation}
for some $\varepsilon >0$. Then the space of polynomials is dense in $L^2\big(\mathbb{R}, (1+x^2) \,\d\mu(x)\big)$.
\end{lemma}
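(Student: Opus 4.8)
The plan is to prove that polynomials are dense in $L^2\big(\mathbb{R}, (1+x^2)\,\d\mu(x)\big)$ by the classical moment-problem route: since $\mu$ has exponential moments, it is a determinate measure, and determinacy is essentially equivalent to density of polynomials in the associated $L^2$ space. Concretely, set $\d\nu(x) := (1+x^2)\,\d\mu(x)$, which is again a locally finite, even, non-negative Borel measure, and note that $\nu$ also satisfies an exponential-moment bound: $\int_{\R} e^{\varepsilon' |x|}\,\d\nu(x) < \infty$ for any $0 < \varepsilon' < \varepsilon$, because $(1+x^2)e^{\varepsilon'|x|} \lesssim e^{\varepsilon|x|}$. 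In particular $\nu$ is a finite measure with finite moments of all orders, so polynomials do lie in $L^2(\R,\nu)$ and the question makes sense.

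\textbf{Step 1: Reduce to a density statement for $\nu$.} It suffices to show that polynomials are dense in $L^2(\R,\nu)$. Equivalently, by the Hahn--Banach theorem, it suffices to show that if $g \in L^2(\R,\nu)$ satisfies $\int_{\R} x^n\, g(x)\,\d\nu(x) = 0$ for every integer $n \geq 0$, then $g = 0$ $\nu$-a.e.

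\textbf{Step 2: Analytic continuation of the transform.} Given such a $g$, consider the function $h(x) := g(x)$ and the complex function
\begin{equation*}
\Phi(z) := \int_{\R} e^{izx}\, g(x)\,\d\nu(x).
\end{equation*}
First check $g \in L^1(\R,\nu)$: by Cauchy--Schwarz, $\int |g|\,\d\nu \leq \|g\|_{L^2(\nu)}\,\nu(\R)^{1/2} < \infty$. Moreover, for $|\mathrm{Im}\, z| < \varepsilon'$ one has $|e^{izx}| \leq e^{\varepsilon'|x|}$, and $\int |g(x)|\, e^{\varepsilon'|x|}\,\d\nu(x) \leq \|g\|_{L^2(\nu)}\big(\int e^{2\varepsilon'|x|}\,\d\nu(x)\big)^{1/2} < \infty$ provided $2\varepsilon' < \varepsilon$. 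Hence $\Phi$ is well defined and, by Morera's theorem together with dominated convergence (differentiating under the integral sign is justified by the same exponential bound), $\Phi$ is analytic in the strip $\{|\mathrm{Im}\, z| < \varepsilon/2\}$.

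\textbf{Step 3: Vanishing derivatives force $\Phi \equiv 0$, hence $g = 0$.} The hypothesis $\int x^n g\,\d\nu = 0$ for all $n \geq 0$ gives $\Phi^{(n)}(0) = i^n \int x^n g(x)\,\d\nu(x) = 0$ for all $n$, so $\Phi$ vanishes to infinite order at $0$; being analytic on a connected strip containing $0$, it is identically zero there, and in particular $\Phi(x) = \int e^{ixs} g(s)\,\d\nu(s) = 0$ for all $x \in \R$. Thus the finite complex measure $g\,\d\nu$ has identically vanishing Fourier transform, so by uniqueness of the Fourier transform of finite measures, $g\,\d\nu = 0$, i.e. $g = 0$ $\nu$-a.e. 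This proves polynomials are dense in $L^2(\R,\nu) = L^2\big(\R,(1+x^2)\,\d\mu(x)\big)$, as claimed.

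The only genuine obstacle is making sure the exponential-moment hypothesis really does survive multiplication by the weight $1+x^2$ and still leaves enough room to run the analytic-continuation argument in a nondegenerate strip; this is handled by shrinking $\varepsilon$ (e.g. work with $\varepsilon/2$ and then $\varepsilon/4$), which costs nothing. Everything else is a routine application of Morera's theorem, differentiation under the integral, the identity theorem for analytic functions, and uniqueness of the Fourier transform of finite Borel measures. I would present the argument for $\nu$ directly and only mention at the end that $L^2(\R,\nu)$ is literally the space in the statement.
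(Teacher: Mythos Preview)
Your proof is correct and is essentially the same as the paper's: both argue by duality, define the Fourier-type transform $\Phi(z)=\int e^{izx}g(x)\,\d\nu(x)$ of the orthogonal element, use the exponential-moment hypothesis to make $\Phi$ analytic in a strip, show it vanishes at the origin to infinite order (the paper does this via the power-series expansion and Fubini, which is the same computation), and conclude $g\,\d\nu=0$ by uniqueness of the Fourier transform of finite measures. The only cosmetic differences are your renaming $(1+x^2)\,\d\mu$ as $\nu$ and the choice of normalization in the exponential.
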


\begin{proof}
Let $ f\in L^2\big(\mathbb{R}, (1+x^2) \,\d\mu(x)\big)$ be such that
\begin{equation}\label{20240709_10:49}
\int_{\R} f(x) \, x^k (1+x^2) \,\d\mu(x) = 0
\end{equation}
for each $k=0,1,2,\ldots$. From \eqref{20240907_10:07} and an application of the Cauchy-Schwarz inequality we have
\begin{equation*}
\begin{split}
    \int_{\R} |f(x) (1 + x^2)|\, \d\mu(x) \leq \left( \int_{\R} (1 + x^2)\, \d\mu(x) \right)^{\frac{1}{2}} \left( \int_{\R} |f(x)|^2 (1 + x^2)\, \d\mu(x) \right)^{\frac{1}{2}} < \infty.
\end{split}
\end{equation*}
Hence $f(x) (1 + x^2) \in L^1(\mathbb{R}, \mu)$. For $\varepsilon >0$ as in \eqref{20240907_10:07}, let $\Omega_{\varepsilon}$ be the region in the complex plane given by $\Omega_{\varepsilon}:=\left\{ z \in \mathbb{C} : | {\rm Im}(z) | < \frac{\varepsilon}{8\pi} \right\}$, and define
\begin{equation*}
F(z) := \int_{\R} e^{-2 \pi i z x} f(x) (1 + x^2) \, \d\mu(x), \quad {\rm for} \ \ z \in \Omega_{\varepsilon}.
\end{equation*}
Note that for each $ z \in \Omega_{\varepsilon}$ we have 
\begin{equation}\label{20240709_10:46}
\begin{split}
    | F(z) | & \leq  \int_{\R} \left\lvert  e^{ -2 \pi i z  x}\right\rvert |f(x)| \,(1 + x^2) \,\d\mu(x) \leq \int_{\R} e^{ \frac{\varepsilon}{4} \lvert x\rvert } |f(x)| \,(1 + x^2) \,\d\mu(x) \\
    & \lesssim \int_{\R} e^{ \frac{\varepsilon}{2} \lvert x\rvert } |f(x)| \sqrt{1 + x^2} \, \d\mu(x) \\
    &  \leq \left( \int_{\R} e^{\varepsilon \lvert x\rvert} \, \d\mu(x) \right)^{\frac{1}{2}} \left( \int_{\R} |f(x)|^2 (1 + x^2)\, \d\mu(x) \right)^{\frac{1}{2}}< \infty.
\end{split}
\end{equation}
Hence, $F$ defines an analytic function in $\Omega_{\varepsilon}$ by Morera's theorem. 

\smallskip

Using \eqref{20240709_10:46} (for the absolute convergence), Fubini's theorem, and \eqref{20240709_10:49}, we have, for each $z \in  \Omega_{\varepsilon}$ with $|z| < \frac{\varepsilon}{8\pi}$,
\begin{align*}
F(z) =   \int_{\R} \left(\sum_{k=0}^{\infty} \frac{(- 2 \pi i z x)^k}{k!}\right) f(x) (1 + x^2) \, \d\mu(x)  = \sum_{k=0}^{\infty}  \frac{(- 2 \pi i z)^k}{k!} \int_{\R}  f(x) \, x^k (1+x^2) \,\d\mu(x)  = 0.
\end{align*}
Since $F$ is analytic on $\Omega_{\varepsilon}$, we conclude that $F =0$ in this region. Since $F$ is the Fourier transform of the finite Borel measure $f(x) (1 + x^2) \, \d\mu(x)$, we conclude that this measure must be identically zero. This in turn implies that $f=0$ in $L^2\big(\mathbb{R}, (1+x^2) \,\d\mu(x)\big)$, as desired.
\end{proof}

\subsection{Proof of Theorem \ref{Thm3_negative_V2}} Let $\mu_1$ be the measure given by $\d\mu_1(x)=(1+x^2)\,\d\mu(x)$. Fix $0 < r <2$ and let $\eta = \eta(r) >0$ to be chosen later. Using Lemma \ref{Den_Pol}, let $P$ be a polynomial such that 
\begin{align}\label{20240709_13:17}
\big\|P - {\bf 1}_{[-r,r]}\big\|_{L^2(\R,\,  \mu_1)} \leq \eta.
\end{align}
Note that 
\begin{align}\label{20240709_13:23}
\begin{split}
\int_{\R} (x^2 - r^2)\, |P(x)|^2\, \d\mu(x)& \leq \int_{[-r,r]^c} (x^2 +1)\, |P(x)|^2\, \d\mu(x) + \int_{-r/2}^{r/2} (x^2 - r^2)\, |P(x)|^2\, \d\mu(x)\\
& \leq \big\|P - {\bf 1}_{[-r,r]}\big\|^2_{L^2(\R,\,  \mu_1)} + \left( \left(\frac{r}{2}\right)^2 - r^2\right)\int_{-r/2}^{r/2} |P(x)|^2\, \d\mu(x)\\
& \leq \eta^2  - \left(\frac{3r^2}{4}\right)\, \frac{1}{2}  \int_{-r/2}^{r/2} (1 + x^2)|P(x)|^2\, \d\mu(x).
\end{split}
\end{align}
From the triangle inequality and \eqref{20240709_13:17} we get
\begin{align*}
\big| \|P\|_{L^2([-\frac{r}{2}, \frac{r}{2}],\,  \mu_1)} - \|{\bf 1}\|_{L^2([-\frac{r}{2}, \frac{r}{2}],\,  \mu_1)} \big| \leq \|P - {\bf 1}\|_{L^2([-\frac{r}{2}, \frac{r}{2}],\,  \mu_1)} \leq \big\|P - {\bf 1}_{[-r,r]}\big\|_{L^2(\R,\,  \mu_1)} \leq \eta,
\end{align*}
and hence
\begin{align}\label{20240709_13:24}
\|P\|_{L^2([-\frac{r}{2}, \frac{r}{2}],\,  \mu_1)} \geq \|{\bf 1}\|_{L^2([-\frac{r}{2}, \frac{r}{2}],\,  \mu_1)} - \eta = \sqrt{\mu_1\big([-\tfrac{r}{2}, \tfrac{r}{2}]\big)} - \eta.
\end{align}
From \eqref{20240709_13:23} and \eqref{20240709_13:24} we obtain
\begin{align}\label{20240709_13:29}
\int_{\R} (x^2 - r^2)\, |P(x)|^2\, \d\mu(x) \leq  \eta^2 - \frac{3r^2}{8}\left( \sqrt{\mu_1\big([-\tfrac{r}{2}, \tfrac{r}{2}]\big)} - \eta\right)^2.
\end{align}
Choosing $\eta$ sufficiently small, we can make the right-hand side of \eqref{20240709_13:29} be negative. Then the function $F(z) = (z^2 - r^2)P(z)P^*(z)$ belongs to the class $\mathcal{E}(\mu; \delta)$ and $r(F) = r$. Since $r$ was arbitrary, we conclude that $\mathbb{E}(\mu; \delta) = 0$.

\section{Applications}

\subsection{Preliminaries} \label{sec5.1} The following alternative characterization of de Branges spaces is given by \cite[Theorem 23]{Branges}. 
\begin{lemma}[cf. \cite{Branges}]\label{Lem15_AC}
Let $H$ be a Hilbert space of entire functions that contains a non-zero element and verifies the following hypotheses: 
\begin{enumerate}
\item[(i)] If $F \in H$ and has a non-real zero $w$, the function $z \mapsto F(z)(z-\overline{w})/(z-w)$ also belongs to $H$ and has the same norm of $F$.
\item[(ii)] For every non-real number $w$, the linear functional defined on $H$ by $F \mapsto F(w)$ is continuous.
\item[(iii)] If $F \in H$, the function $F^*$ also belongs to $H$ and has the same norm of $F$.
\end{enumerate}
Then $H$ is equal isometrically to some de Branges space $\mc{H}(E)$. 
\end{lemma}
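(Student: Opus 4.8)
The plan is to manufacture from the three axioms a Hermite--Biehler function $E$ whose reproducing kernel coincides with that of $H$; since a reproducing kernel Hilbert space is determined by its kernel, this forces $H = \mathcal{H}(E)$ with equal norms. By hypothesis (ii) and the Riesz representation theorem, $H$ is a reproducing kernel Hilbert space: for each non-real $w$ there is $K(w,\cdot)\in H$ with $F(w)=\langle F,K(w,\cdot)\rangle_{H}$ for all $F\in H$. Two soft preliminary facts will be used repeatedly. First, $K(w,w)>0$ for every non-real $w$: otherwise $K(w,\cdot)\equiv 0$, so $F(w)=0$ for every $F\in H$, and then applying (i) to a non-zero $F\in H$ (which must vanish at $w$) and iterating would force $F$ to vanish to infinite order at $w$, hence $F\equiv 0$, contradicting that $H$ contains a non-zero element. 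Second, for any $F\in H$ the function $F(z)-\tfrac{F(w)}{K(w,w)}K(w,z)$ vanishes at $w$, so (i) applies to it; in this way (i) really delivers control of the difference quotients $R_wF(z):=\bigl(F(z)-F(w)\bigr)/(z-w)$, the natural objects of de Branges' theory.

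The technical heart is to extract from (i) and (iii) a rigid algebraic identity for $K$. For non-real $w$, hypothesis (i) makes $F\mapsto \tfrac{z-\bar w}{z-w}F(z)$ an isometry of $\{F\in H:F(w)=0\}$ onto $\{G\in H:G(\bar w)=0\}$, and each of these is the orthogonal complement of the line spanned by a reproducing kernel ($K(w,\cdot)$, resp.\ $K(\bar w,\cdot)$). Feeding this isometry through the conjugation $F\mapsto F^*$ of hypothesis (iii), which is an isometric involution interchanging these two subspaces, and testing the resulting relations against reproducing kernels — using $\overline{K(w,\beta)}=K(\beta,w)$ — produces, after some bookkeeping, entire functions $A$ and $B$ with
\begin{equation*}
K(w,z)=\frac{B(z)\,A(\bar w)-A(z)\,B(\bar w)}{\pi\,(z-\bar w)},
\end{equation*}
in exact parallel with \eqref{Def_Rep_Ker}. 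Hypothesis (iii) forces $A$ and $B$ to be real entire, and the strict positivity $K(w,w)>0$ for ${\rm Im}\,w>0$ translates into the Hermite--Biehler inequality $|E^*(w)|<|E(w)|$ on $\C^{+}$ for $E:=A-iB$. This derivation is the step I expect to be the main obstacle: turning the two \emph{soft} Hilbert-space hypotheses (isometric zero-removal and isometric conjugation) into the \emph{hard} analytic identity that pins the kernel down to canonical form. I would carry it out exactly as de Branges does in Chapter~II of \cite{Branges} (Theorems~19--23), organizing the bookkeeping around the difference-quotient operators $R_w$ and their adjoint relations.

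With $E$ in hand, let $\mathcal{H}(E)$ be the associated de Branges space. By \eqref{Def_Rep_Ker} its reproducing kernel is the same function $K(w,z)$ we started from, so $H$ and $\mathcal{H}(E)$ are reproducing kernel Hilbert spaces with identical kernels; consequently they consist of the same entire functions and carry the same norm, provided one verifies that the membership condition defining $\mathcal{H}(E)$ — finiteness of $\int_{\R}|F(x)|^2|E(x)|^{-2}\,\d x$ as in \eqref{20240508_18:22}, together with $F/E$ and $F^*/E$ of bounded type and non-positive mean type in $\C^{+}$ — holds automatically for $F\in H$. The first condition follows by identifying the $H$-norm with the $\mathcal{H}(E)$-norm on the dense span of the kernels $K(w,\cdot)$ and extending by continuity. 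The bounded-type and mean-type conditions are the remaining subtle point: they come from a Phragm\'en--Lindel\"of argument applied to $F/E$, in the spirit of Krein's Lemma~\ref{theorem-krein}, using that $|F/E|$ is square-integrable on $\R$ and that any $F\in H$, being an $H$-limit of finite linear combinations of reproducing kernels, has growth controlled by $|E|$. Along the way one also records that point evaluation is continuous at real points as well and that $H$ indeed consists of entire functions — both built into the kernel formula — which completes the isometric identification $H=\mathcal{H}(E)$.
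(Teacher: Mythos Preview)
The paper does not prove this lemma at all: it is stated with the attribution ``cf.\ \cite{Branges}'' and is simply quoted as \cite[Theorem~23]{Branges}. The paragraph immediately following the lemma does, however, record the concrete recipe for recovering $E$ from the reproducing kernel $K$, namely
\[
L(w,z):=2\pi i(\overline{w}-z)K(w,z),\qquad E(z):=\frac{L(i,z)}{L(i,i)^{1/2}},
\]
and refers to \cite[Appendix~A]{CCLM} for the verification that this $E$ is Hermite--Biehler and that $H=\mathcal{H}(E)$ isometrically. Your sketch is broadly aligned with the standard de Branges argument and with this construction; the main difference is that you describe extracting $A$ and $B$ from the axioms via difference-quotient bookkeeping, whereas the paper (and \cite{CCLM}) simply \emph{defines} $E$ by the displayed formula at the fixed base point $w=i$ and then checks the required properties. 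Since the paper treats this as a black-box citation, there is nothing further to compare.
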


We have seen in \S \ref{dB_discussion_subs} how to construct the $K$ from $E$. An observation that is important for applications is the fact that we can revert this process and construct $E$ from $K$ (not necessarily in a unique way). This works as follows: if $K$ is the reproducing kernel of a Hilbert space $H$ verifying the conditions of Lemma \ref{Lem15_AC}, define the function 
\begin{equation}\label{20240709_15:43}
L(w,z) := 2\pi i (\overline{w} - z) K(w,z)\,,
\end{equation}
and the entire function
\begin{equation}\label{20210909_19:00}
E(z) := \frac{L(i,z)}{L(i,i)^{\frac12}}.
\end{equation}
One can show that $E$ is a Hermite-Biehler function such that 
\begin{equation*}
L(w,z) = E(z)E^*(\ov{w}) - E^*(z)E(\ov{w})\,,
\end{equation*}
and that $H$ is equal isometrically to the de Branges space $\mc{H}(E)$. See \cite[Appendix A]{CCLM} for details.

\subsection{Proof of Corollary \ref{Cor4_NT}} For $\Delta >0$ and $W$ one of the density functions given by \eqref{20240415_16:35}, one can consider the Hilbert space $H$ of entire functions $F$ of exponential type at most $\Delta$ such that 
\begin{align*}
\|F\|_{H}:= \left(\int_{\R} |F(x)|^2\, W(x)\,\d x \right)^{1/2} < \infty.
\end{align*}
The usual Paley-Wiener space $PW(\Delta)$ is the (reproducing kernel) Hilbert space of entire functions $F$ of exponential type at most $\Delta$ such that 
\begin{align*}
\|F\|_{PW(\Delta)}:= \|F\|_{L^2(\R)} =  \left(\int_{\R} |F(x)|^2\,\d x \right)^{1/2} < \infty.
\end{align*}
One sees that $H$ and $PW(\Delta)$ are equal as sets (since the density function $W$ is asymptotically $1$). Moreover, due to classical uncertainty principles for the Fourier transform, one can show that 
$$\|F\|_{H} \simeq  \|F\|_{L^2(\R)}.$$
See \cite[Section 3]{CChiM} for details. We then conclude that $H$ is also a reproducing kernel Hilbert space, and fits into the hypotheses of Lemma \ref{Lem15_AC}.

\smallskip

It is shown in \cite[\S 4.1]{CChiM} that the reproducing kernel $K$ of $H$ verifies the following properties, for all $w,z \in \C$: (i) $K(w,w) >0$; \, (ii) $K(-w,-z) = K(w,z)$; \, (iii) $K(z,w) = \overline{K(w,z)}$; \, (iv) $\overline{K(\overline{w},\overline{z})} = K(w,z)$. It then follows that $E$ defined by \eqref{20240709_15:43} - \eqref{20210909_19:00} verifies conditions (C2) and (C3) (if $E(w) = 0$ for $w \in \R$, one would have $K(w,w) = 0$). Since $K(i,z)$ has exponential type, so does $E$. Also, since $K(i,z)$ belongs to $L^2(\R)$, we may use Jensen's inequality to obtain
\begin{align}\label{20240712_16:00}
\frac{1}{\pi} \int_{\R} \frac{\log^+|E(x)|}{1+x^2} \, \d x \leq \frac{1}{2\pi} \int_{\R} \frac{\log \big(1 + |E(x)|\big)^2}{1+x^2} \, \d x \leq \frac{1}{2} \log\left( 1 + \frac{1}{\pi} \int_{\R} \frac{|E(x)|^2}{1 + x^2} \,\d x \right) < \infty.
\end{align}
Hence, $E$ verifies the conditions of Lemma \ref{theorem-krein}, which implies it has bounded type in $\C^+$ (condition (C1)). From Lemma \ref{Lem_9_VET} we have $\tau(E) = \tau(K(0,\cdot))$, which is at most $\Delta$. In fact, we must have $\tau(E) = \Delta$, since by Lemma \ref{Lem_9_VET} the space $H = \mc{H}(E)$ admits only functions $F$ with $\tau(F) \leq \tau(E)$, and we know from the start that there exist functions in $PW(\Delta)$ with exponential type equal to $\Delta$. Finally, condition (C4) follows from the classical Krein's factorization in $PW(\Delta)$ or Lemma \ref{nonnegl1-tohe}, depending if we start with the left-hand side or the right-hand side of \eqref{20240712_16:29} being finite, and the fact that $H$ is isometric to $\mc{H}(E)$.

\smallskip

The conclusion is that this situation falls in the framework of Theorem \ref{dB_Thm2}, as claimed. 

\subsection{Proof of Corollary \ref{Cor_Powers}} \label{Pf_Cor_5} Let $\nu > -1$. A de Branges space $\mc{H}(E)$ is said to be homogeneous of order $\nu$ if, for all $0 < a < 1$ and all $f \in \mc{H}(E)$, the function $z \mapsto a^{\nu +1}f(az)$ belongs to $\mc{H}(E)$ and has the same norm as $f$.  Such spaces were characterized by L. de Branges in \cite{B2} (see also \cite[Section 50]{Branges}). 

\smallskip

Let $A_{\nu}$ be the even real entire function defined in \eqref{20240710_08:38}. We consider an odd real entire function $B_{\nu}$ given by
\begin{align*}
B_{\nu}(z)  = \Gamma(\nu +1) \left(\tfrac12 z\right)^{-\nu} J_{\nu+1}(z) = \sum_{n=0}^{\infty} \frac{(-1)^n \big(\tfrac12 z\big)^{2n+1}}{n!(\nu +1)(\nu +2)\ldots(\nu+n+1)}.
\end{align*}
Then the function 
\begin{equation*}
E_{\nu}(z) := A_{\nu}(z) - iB_{\nu}(z)
\end{equation*}
turns out to be a Hermite-Biehler function of bounded type in $\C^+$, with no real zeros, and $\mc{H}(E_{\nu})$ is homogeneous of order $\nu$. Moreover we have $\tau(A_{\nu}) = \tau(B_{\nu}) = \tau(E_{\nu}) =1$. Note also that $A_{\nu}, B_{\nu} \notin \mc{H}(E_{\nu})$ (for this one can look at the behaviour of the Bessel function $J_{\nu}$ at infinity). We refer the reader to \cite[Section 50]{Branges}, \cite[Section 5]{HV} or \cite[Sections 3 and 4]{CL1} for details. Some properties of the spaces $\mc{H}(E_{\nu})$ that are relevant for our purposes are collected in the next lemma, which is contained in \cite[Eqs. (5.1), (5.2) and Lemma 16]{HV}. 

\begin{lemma}\label{Sec5_rel_facts}
Let $\nu > -1$. The following properties hold:
\begin{enumerate}
\item[(i)] There exist positive constants $a_\nu,b_\nu$ such that 
\begin{align*}
a_\nu |x|^{2\nu+1} \le |E_{\nu}(x)|^{-2} \le b_\nu |x|^{2\nu+1}
\end{align*}
for all $x \in \R$ with $|x|\geq1$.
\smallskip

\item[(ii)] For $F\in\H(E_\nu)$ we have the identity 
\begin{align*} 
\int_{\R} |F(x)|^{2}\,|E_{\nu}(x)|^{-2}\, \d x = c_\nu \int_{\R} |F(x)|^2 \,|x|^{2\nu+1} \,\d x\,,
\end{align*}
with $c_\nu = \pi \,2^{-2\nu-1}\, \Gamma(\nu+1)^{-2}$.

\smallskip

\item[(iii)] An entire function $F$ belongs to $\H(E_\nu)$ if and only if $F$ has exponential type at most $1$ and
\begin{equation*}
\int_{\R} |F(x)|^2 \,|x|^{2\nu+1}\, \d x <\infty.
\end{equation*}
\end{enumerate}
\end{lemma}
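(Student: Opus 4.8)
I would prove the three items in the order (i), (iii), (ii), since (i) feeds into both of the others. The starting point is the explicit form of the companion functions: since $A_\nu(z) = \Gamma(\nu+1)(z/2)^{-\nu}J_\nu(z)$ and $B_\nu(z) = \Gamma(\nu+1)(z/2)^{-\nu}J_{\nu+1}(z)$ are real on $\R$,
\[
|E_\nu(x)|^2 = A_\nu(x)^2 + B_\nu(x)^2 = \Gamma(\nu+1)^2\,(|x|/2)^{-2\nu}\big(J_\nu(x)^2 + J_{\nu+1}(x)^2\big).
\]
For (i) I would use that $x\mapsto J_\nu(x)^2 + J_{\nu+1}(x)^2$ is continuous and strictly positive on $[1,\infty)$ (positivity because $J_\nu$ and $J_{\nu+1}$ have no common real zero, a classical consequence of the three-term recurrence together with the Bessel ODE) and that the standard large-argument asymptotics $J_\nu(x) = \sqrt{2/(\pi x)}\,\big(\cos(x-\nu\pi/2-\pi/4)+O(1/x)\big)$ give $x\big(J_\nu(x)^2+J_{\nu+1}(x)^2\big)\to 2/\pi$ as $x\to\infty$. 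Hence $x\big(J_\nu(x)^2+J_{\nu+1}(x)^2\big)$ is trapped between two positive constants on $[1,\infty)$, which together with the evenness of $|E_\nu|$ gives $|E_\nu(x)|^{-2}\simeq |x|^{2\nu+1}$ for $|x|\ge 1$.

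For (iii) I would apply Lemma \ref{Lem_9_charac_dB_spaces_ET} to $E_\nu$ (which is Hermite--Biehler of bounded type in $\C^+$ with $\tau(E_\nu)=1$): an entire $F$ lies in $\H(E_\nu)$ if and only if $\tau(F)\le 1$ and $\int_{\R}|F(x)|^2|E_\nu(x)|^{-2}\,\d x<\infty$. It then only remains to see that, for $F$ of exponential type at most $1$, this last condition is equivalent to $\int_{\R}|F(x)|^2|x|^{2\nu+1}\,\d x<\infty$. Indeed such an $F$ is bounded on $[-1,1]$, there $|E_\nu|^{-2}$ is bounded (no real zeros) and $|x|^{2\nu+1}$ is integrable near $0$ since $\nu>-1$, so both ``local'' integrals over $[-1,1]$ are automatically finite; hence the finiteness of either full integral reduces to that of the tail over $\{|x|\ge 1\}$, where the two integrands are comparable by (i). This proves (iii) and, in particular, identifies $\H(E_\nu)$ as a set with the weighted Paley--Wiener space $H_\nu$ of entire functions of exponential type at most $1$ lying in $L^2(|x|^{2\nu+1}\,\d x)$.

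For (ii) I would first note that, by (i) and (iii), the norm $\|F\|_{H_\nu}^2 := \int_{\R}|F(x)|^2|x|^{2\nu+1}\,\d x$ is equivalent to $\|\cdot\|_{\H(E_\nu)}^2$ on $H_\nu$, so point evaluations at non-real points are bounded; since $|(x-\ov w)/(x-w)|=1$ and $|F^*(x)|=|F(x)|$ on $\R$, the pair $\big(H_\nu,\|\cdot\|_{H_\nu}\big)$ satisfies the hypotheses of Lemma \ref{Lem15_AC} and is therefore isometrically a de Branges space, and the substitution $x\mapsto ax$ shows that it is homogeneous of order $\nu$, just as $\H(E_\nu)$ is. The key structural fact is that two homogeneous de Branges spaces of the same order $\nu$ consisting of the same entire functions have proportional norms; I would extract this either from de Branges's classification of homogeneous spaces in \cite{B2}, or, more explicitly, by computing the reproducing kernel of $H_\nu$ via the modified Hankel transform and the Lommel integral $\int_0^1 J_\nu(at)J_\nu(bt)\,t\,\d t = \big(aJ_{\nu+1}(a)J_\nu(b)-bJ_\nu(a)J_{\nu+1}(b)\big)\big/(a^2-b^2)$, which returns a constant multiple of the kernel $K(w,z)=\big(B_\nu(z)A_\nu(w)-A_\nu(z)B_\nu(w)\big)\big/\big(\pi(z-w)\big)$ of $\H(E_\nu)$. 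Granting $\|\cdot\|_{\H(E_\nu)}^2 = c_\nu\|\cdot\|_{H_\nu}^2$, I would pin down $c_\nu$ by testing on $F=K(0,\cdot)$, i.e. $F(z)=B_\nu(z)/(\pi z)$: the reproducing property gives $\|F\|_{\H(E_\nu)}^2 = K(0,0) = B_\nu'(0)/\pi = 1/\big(2\pi(\nu+1)\big)$, while $\|F\|_{H_\nu}^2 = \pi^{-2}\Gamma(\nu+1)^2\,2^{2\nu}\int_{\R}|x|^{-1}J_{\nu+1}(x)^2\,\d x = \pi^{-2}\Gamma(\nu+1)^2\,2^{2\nu}(\nu+1)^{-1}$ by the classical evaluation $\int_0^\infty J_{\nu+1}(x)^2 x^{-1}\,\d x = 1/\big(2(\nu+1)\big)$; solving gives $c_\nu = \pi\,2^{-2\nu-1}\Gamma(\nu+1)^{-2}$. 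The step I expect to be the main obstacle is precisely this structural part of (ii): upgrading the cheap norm equivalence coming from (i) to an exact proportionality of the two de Branges norms, and then evaluating the constant $c_\nu$; items (i) and (iii), and the reduction of (ii) to the proportionality statement, are routine combinations of Lemmas \ref{Lem_9_charac_dB_spaces_ET} and \ref{Lem15_AC} with standard Bessel-function asymptotics and integral identities.
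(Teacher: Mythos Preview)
The paper does not give its own proof of this lemma; it is stated with the attribution ``which is contained in \cite[Eqs.~(5.1), (5.2) and Lemma 16]{HV}'' and simply quoted. So there is no argument in the paper to compare against, and your proposal is in effect a reconstruction of the Holt--Vaaler facts from first principles.

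Your arguments for (i) and (iii) are correct and standard: the Bessel asymptotics $J_\nu(x)^2+J_{\nu+1}(x)^2\sim 2/(\pi x)$ together with the absence of common real zeros of $J_\nu$ and $J_{\nu+1}$ give (i), and then Lemma~\ref{Lem_9_charac_dB_spaces_ET} plus the local integrability observations yield (iii). Your treatment of (ii) is also sound: realizing $H_\nu=\big(\{F:\tau(F)\le 1\},\,\int|F|^2|x|^{2\nu+1}\,\d x\big)$ as a de Branges space via Lemma~\ref{Lem15_AC}, invoking homogeneity to force proportionality of the two norms, and then pinning down $c_\nu$ by evaluating both norms on $K(0,\cdot)=B_\nu(z)/(\pi z)$ is a clean route, and your arithmetic for $c_\nu$ checks out. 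The alternative you mention---computing the reproducing kernel of $H_\nu$ directly from the Hankel--Paley--Wiener isometry and the Lommel integral---is closer in spirit to how Holt and Vaaler proceed, and has the advantage of bypassing the appeal to de~Branges's classification of homogeneous spaces; if you want a fully self-contained write-up, that second path is probably the cleaner one to execute, since the ``two homogeneous spaces with the same underlying set have proportional norms'' step otherwise requires unpacking \cite{B2} or \cite[\S50]{Branges}.
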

This leads us to the conclusion that $E_{\nu}$ verifies conditions (C1), (C2), (C3) and, with the aid of Lemma \ref{Sec5_rel_facts} and Lemma \ref{nonnegl1-tohe}, one also verifies (C4). We then fall in the framework of Theorem \ref{dB_Thm2}, and the corollary follows (note that one can go from exponential type $1$ to an arbitrary exponential type $\delta >0$ with a dilation argument).

\subsection{Proof of Corollary \ref{Cor6_PWeights}} We proceed with the proof for $\delta=1$. For a generic exponential type $\delta >0$, one simply needs to consider the function $z \mapsto E_{\nu}(\delta z)$ in the proof below.

\smallskip

Let $H$ be the space of entire functions $F$ of exponential type at most $1$ such that 
\begin{align*}
\|F\|_{H}:= \left(\int_{\R} |F(x)|^2\,\d \mu(x) \right)^{1/2} < \infty.
\end{align*}
From Lemma \ref{Sec5_rel_facts} and the hypotheses of Corollary \ref{Cor6_PWeights}, one sees that $H$ and $\mc{H}(E_{\nu})$ are equal as sets. The next piece of information required for this proof is the following result.

\begin{proposition}\label{Prop_17}
Under the hypotheses of Corollary \ref{Cor6_PWeights} we have
\begin{align}\label{20240710_12:46}
\|F\|_{H} \simeq  \|F\|_{\mc{H}(E_{\nu})}.
\end{align}
\end{proposition}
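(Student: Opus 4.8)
The plan is to establish the norm equivalence \eqref{20240710_12:46} by controlling the discrepancy between the weight $W(x)$ and the weight $|x|^{2\nu+1}$ (or equivalently $|E_\nu(x)|^{-2}$, by Lemma \ref{Sec5_rel_facts} (i)), exploiting the fact that the two weights differ only on the bounded set $\{|x| \le R\}$. First I would split the integral defining $\|F\|_H^2$ as $\int_{|x|>R} |F(x)|^2 W(x)\,\d x + \int_{|x|\le R} |F(x)|^2\,\d\mu(x)$. On the outer region, the hypothesis $W(x) \simeq |x|^{2\nu+1}$ together with Lemma \ref{Sec5_rel_facts} (i) immediately gives $\int_{|x|>R}|F(x)|^2 W(x)\,\d x \simeq \int_{|x|>R}|F(x)|^2 |E_\nu(x)|^{-2}\,\d x$, which is bounded above by $\|F\|_{\mc{H}(E_\nu)}^2$. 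So the heart of the matter is to bound the inner piece $\int_{|x|\le R} |F(x)|^2\,\d\mu(x)$, and also to recover a lower bound for $\|F\|_H$.

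For the upper bound on the inner piece, the key point is that a function $F \in \mc{H}(E_\nu)$ has exponential type at most $1$ and finite $\mc{H}(E_\nu)$-norm, so by general principles (e.g. a Plancherel--Pólya type inequality, or the reproducing kernel bound $|F(x)|^2 \le K_\nu(x,x)\,\|F\|_{\mc{H}(E_\nu)}^2$) the pointwise values of $F$ on the compact set $[-R,R]$ are controlled by $\|F\|_{\mc{H}(E_\nu)}$. Indeed, since $K_\nu(x,x)$ is continuous, $\sup_{|x|\le R} K_\nu(x,x) =: C_R < \infty$, and $\mu$ is locally finite, one gets $\int_{|x|\le R}|F(x)|^2\,\d\mu(x) \le C_R\,\mu([-R,R])\,\|F\|_{\mc{H}(E_\nu)}^2$. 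Combining with the outer estimate yields $\|F\|_H \lesssim \|F\|_{\mc{H}(E_\nu)}$. For the reverse inequality, one similarly writes $\|F\|_{\mc{H}(E_\nu)}^2 = \int_{|x|>R}|F(x)|^2|E_\nu(x)|^{-2}\,\d x + \int_{|x|\le R}|F(x)|^2|E_\nu(x)|^{-2}\,\d x$; the outer term is $\simeq \int_{|x|>R}|F(x)|^2 W(x)\,\d x \le \|F\|_H^2$, and the inner term is again bounded by $C_R'\,\|F\|_{\mc{H}(E_\nu)}^2$ with $C_R' = 2R\sup_{|x|\le R}|E_\nu(x)|^{-2}\sup_{|x|\le R}K_\nu(x,x)$ — but this is circular, so instead I would bound the inner $\mc{H}(E_\nu)$-piece by the inner $H$-piece after establishing, via the hypothesis (i) of Corollary \ref{Cor6_PWeights}, namely $\mu([-r,r])>0$ for all $r>0$, that $\int_{|x|\le R}|F(x)|^2\,\d\mu(x)$ genuinely controls pointwise values on some smaller sphere and hence, by the reproducing kernel comparison on a slightly larger compact set, the whole $\mc{H}(E_\nu)$-norm of the ``localized part'' of $F$.

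The cleanest route for the lower bound, which I expect to be the main obstacle, is to argue by contradiction using normal families: if no constant worked, there would be a sequence $F_n \in \mc{H}(E_\nu)$ with $\|F_n\|_{\mc{H}(E_\nu)} = 1$ but $\|F_n\|_H \to 0$. Since $\|F_n\|_{\mc{H}(E_\nu)}=1$, the $F_n$ form a normal family (bounded pointwise by $\sqrt{K_\nu(z,z)}$ on compacta), so a subsequence converges locally uniformly to some entire $F$ of exponential type at most $1$; one checks $\|F\|_{\mc{H}(E_\nu)} \le 1$. From $\|F_n\|_H \to 0$, the outer integrals $\int_{|x|>R}|F_n(x)|^2 W(x)\,\d x \to 0$, whence by the weight comparison $\int_{|x|>R}|F_n(x)|^2|E_\nu(x)|^{-2}\,\d x \to 0$; also the inner $H$-integrals tend to $0$, and since $\mu([-R,R])>0$ and $\mu$ dominates a positive mass near $0$, Fatou together with local uniform convergence forces $F \equiv 0$ on $[-R,R]$, hence $F\equiv 0$. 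But then $\int_{|x|\le R}|F_n(x)|^2|E_\nu(x)|^{-2}\,\d x \to 0$ by local uniform convergence (the integrand converges uniformly on $[-R,R]$), and combined with the vanishing outer integral this gives $\|F_n\|_{\mc{H}(E_\nu)} \to 0$, contradicting $\|F_n\|_{\mc{H}(E_\nu)}=1$. The delicate point is verifying that the limit $F$ actually lies in $\mc{H}(E_\nu)$ with controlled norm (using Fatou on $|E_\nu(x)|^{-2}\,\d x$) and that the exponential type does not jump in the limit — both are standard for de Branges spaces but must be invoked carefully, e.g. via Lemma \ref{Lem_9_charac_dB_spaces_ET}.

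Once Proposition \ref{Prop_17} is in hand, the remainder of the proof of Corollary \ref{Cor6_PWeights} follows exactly as in the proof of Corollary \ref{Cor4_NT}: the norm equivalence shows $H$ is a reproducing kernel Hilbert space satisfying the hypotheses of Lemma \ref{Lem15_AC}, so $H = \mc{H}(\widetilde E)$ isometrically for some Hermite--Biehler $\widetilde E$ constructed via \eqref{20240709_15:43}--\eqref{20210909_19:00}; one checks $\widetilde E$ satisfies (C1)--(C3) from the kernel properties, that $\tau(\widetilde E) = 1$, and that (C4) holds by Lemma \ref{nonnegl1-tohe} applied in $H = \mc{H}(\widetilde E)$ together with the comparison of weights on $\{|x|>R\}$ and the isometry. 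Theorem \ref{dB_Thm2} then applies and yields in particular $\mathbb{E}(\mu;\delta)>0$.
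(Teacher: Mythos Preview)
Your upper bound $\|F\|_H \lesssim \|F\|_{\mc{H}(E_\nu)}$ is exactly the paper's argument: reproducing-kernel pointwise bound on $[-R,R]$ plus weight comparison on $\{|x|>R\}$.

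For the lower bound your route is genuinely different from the paper's. The paper does \emph{not} argue by compactness; it invokes the Amrein--Berthier--Nazarov uncertainty principle to get a quantitative inequality of the form $\int_{|x|\le R}|G(x)|^2\,\d x \lesssim \int_{|x|>R}|G(x)|^2\,\d x$ for suitable bandlimited $G$, with a case split according to the sign of $2\nu+1$ (when $-1<\nu<-\tfrac12$ one first multiplies by $E_{-\nu-1}$ to land in $L^2(\R)$ and trims a neighbourhood of the origin via the kernel bound). That approach is constructive and gives an explicit constant. Your normal-families argument is softer but perfectly adequate for the statement, and arguably more elementary since it avoids the uncertainty principle entirely.

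There is, however, one genuine slip in your write-up. You twice invoke ``hypothesis (i) of Corollary~\ref{Cor6_PWeights}, namely $\mu([-r,r])>0$ for all $r>0$''; that condition is a hypothesis of Theorem~\ref{Thm3_negative_V2}, not of Corollary~\ref{Cor6_PWeights}. Under the actual hypotheses of Corollary~\ref{Cor6_PWeights} the measure $\mu$ may well vanish identically on $[-R,R]$, so you cannot use the inner $H$-integral to force $F\equiv 0$ there. Fortunately this step is unnecessary: you have already established $\int_{|x|>R}|F_n(x)|^2|E_\nu(x)|^{-2}\,\d x\to 0$, and Fatou applied to \emph{that} integral gives $F=0$ a.e.\ on $\{|x|>R\}$ with respect to Lebesgue measure; since $F$ is entire, the identity theorem yields $F\equiv 0$ directly. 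With this correction your contradiction closes exactly as you wrote, using that $\int_{|x|\le R}|x|^{2\nu+1}\,\d x<\infty$ (this is where $\nu>-1$ enters) together with uniform convergence on $[-R,R]$.
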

\begin{proof}
Let $K_{\nu}(w,z)$ be the reproducing kernel of the space $\mc{H}(E_{\nu})$. If $F \in H = \mc{H}(E_{\nu})$ one has the following pointwise bound, as a consequence of the reproducing kernel property and the Cauchy-Schwarz inequality,
\begin{align}\label{20240710_12:40}
|F(x)|^2 = \big|\langle F, K_{\nu}(x, \cdot))\rangle_{\mc{H}(E_{\nu})}\big|^2 \leq \|F\|_{\mc{H}(E_{\nu})}^2\, \big\|K_{\nu}(x, \cdot)\big\|_{\mc{H}(E_{\nu})}^2 = \|F\|_{\mc{H}(E_{\nu})}^2\, K_{\nu}(x, x).
\end{align}
Then, from \eqref{20240710_12:40}, the hypotheses of Corollary \ref{Cor6_PWeights}, and Lemma \ref{Sec5_rel_facts},
\begin{align*}
\int_{\R} |F(x)|^2\,\d \mu(x) & = \int_{|x|\leq R} |F(x)|^2\,\d \mu(x)  + \int_{|x| > R} |F(x)|^2\,\d \mu(x) \\
& \lesssim \|F\|_{\mc{H}(E_{\nu})}^2  \int_{|x|\leq R} K_{\nu}(x, x) \,\d \mu(x) +  \int_{|x| > R} |F(x)|^2\,|x|^{2\nu +1}\,\d x\\
& \lesssim \|F\|_{\mc{H}(E_{\nu})}^2.
\end{align*}
This accounts for one of the inequalities needed to prove \eqref{20240710_12:46}.

\smallskip

For the other inequality we argue as follows. If $\nu \geq -1/2$, we have that $F \in H = \mc{H}(E_{\nu})$ implies $F \in L^2(\R)$ and, from the Paley-Wiener theorem, ${\supp}(\widehat{F}) \subset [-\tfrac{1}{2\pi}, \tfrac{1}{2\pi}]$. Then, from the classical Amrein-Berthier-Nazarov uncertainty principle for the Fourier transform \cite{AB, N} (see, alternatively, \cite[Lemma 9]{CGOOORS}), 
\begin{align}\label{20240712:12:04}
\int_{|x|\leq R} |F(x)|^2\,\d x \lesssim \int_{|x|>R} |F(x)|^2\,\d x.
\end{align}
From \eqref{20240712:12:04} we get
\begin{align*}
\int_{\R} |F(x)|^2\,|x|^{2\nu +1}\,\d x & \leq R^{2\nu +1}\int_{|x|\leq R} |F(x)|^2\,\d x + \int_{|x|>R} |F(x)|^2\,|x|^{2\nu +1}\,\d x\\
& \lesssim R^{2\nu +1}\int_{|x|>R} |F(x)|^2\,\d x + \int_{|x|>R} |F(x)|^2\,|x|^{2\nu +1}\,\d x\\
& \lesssim \int_{|x|>R} |F(x)|^2\,|x|^{2\nu +1}\,\d x\\
& \lesssim \int_{\R} |F(x)|^2\,\d \mu(x),
\end{align*}
which is the desired inequality due to Lemma \ref{Sec5_rel_facts} (ii).

\smallskip

Now we consider the case $-1 < \nu < -1/2$. For $0< \varepsilon <1$, using \eqref{20240710_12:40} we get 
\begin{align}\label{20240712_13:32}
\begin{split}
\int_{|x|< \varepsilon} |F(x)|^2\,|x|^{2\nu +1}\,\d x &\leq \|F\|_{\mc{H}(E_{\nu})}^2\int_{|x|< \varepsilon}  K_{\nu}(x, x)\,|x|^{2\nu +1}\,\d x\\
& \leq \left( \max_{|x| \leq 1} K_{\nu}(x, x) \int_{|x|< \varepsilon} |x|^{2\nu +1}\,\d x\right) \|F\|_{\mc{H}(E_{\nu})}^2.
\end{split}
\end{align}
Hence, from Lemma \ref{Sec5_rel_facts} (ii) and \eqref{20240712_13:32}, we can choose $\varepsilon >0$ sufficiently small, independently of $F$, such that 
\begin{align}\label{20240712_15:31}
\begin{split}
\int_{|x|< \varepsilon} |F(x)|^2\,|x|^{2\nu +1}\,\d x \leq \int_{|x|\geq  \varepsilon} |F(x)|^2\,|x|^{2\nu +1}\,\d x. 
\end{split}
\end{align}
Now consider the entire function $E_{\alpha}$, with $\alpha = -\nu -1$, that has exponential type at most $1$. Note from Lemma \ref{Sec5_rel_facts} (i) that 
\begin{align*}
|E_{\alpha}(x)|^2 \simeq |x|^{2\nu+1}
\end{align*}
for $|x| \geq \varepsilon$ (we extended the range from $|x| \geq 1$ to $|x| \geq \varepsilon$ simply by noting that $E_{\alpha}$ has no real zeros). We then note that $F \in H = \mc{H}(E_{\nu})$ implies that $F\,E_{\alpha} \in L^2(\R)$ and has exponential type at most $2$. An application of the Amrein-Berthier-Nazarov uncertainty principle \cite{AB, N} yields
\begin{align}\label{20240712_15:30}
\begin{split}
\int_{\varepsilon \leq |x| \leq R} |F(x)|^2\,|x|^{2\nu +1}\,\d x & \lesssim \int_{\varepsilon \leq |x| \leq R} |F(x)E_{\alpha}(x)|^2\,\d x\\
&  \lesssim \int_{ |x| > R} |F(x)E_{\alpha}(x)|^2\,\d x \lesssim \int_{ |x| > R} |F(x)|^2\,|x|^{2\nu +1}\,\d x.
\end{split}
\end{align}
Then, \eqref{20240712_15:31} and \eqref{20240712_15:30} plainly lead to 
\begin{align*}
\int_{\R} |F(x)|^2\,|x|^{2\nu +1}\,\d x \lesssim \int_{ |x| > R} |F(x)|^2\,|x|^{2\nu +1}\,\d x \lesssim \int_{\R} |F(x)|^2\,\d \mu(x)\,,
\end{align*}
as desired. This concludes the proof.
\end{proof}
From Proposition \ref{Prop_17} we conclude that $H$ is a reproducing kernel Hilbert space and the conditions to invoke Lemma \ref{Lem15_AC} are in place. Letting $K$ be the reproducing kernel of the Hilbert space $H$, one can show (as done in \cite[\S 4.1]{CChiM}), since the measure $\mu$ is even, that $K$ verifies the following properties, for all $w,z \in \C$: (i) $K(w,w) >0$; \, (ii) $K(-w,-z) = K(w,z)$; \, (iii) $K(z,w) = \overline{K(w,z)}$; \, (iv) $\overline{K(\overline{w},\overline{z})} = K(w,z)$. From this it follows that $E$ defined by \eqref{20240709_15:43} - \eqref{20210909_19:00} verifies conditions (C2) and (C3). Since $\log^+|ab| \leq \log^+|a| + \log^+|b|$, we have $\log^+|E(x)| \leq \log^+\big|E(x)|x|^{\nu + \frac12}\big| + \log^+\big||x|^{-\nu - \frac12}\big|$ and an application of Jensen's inequality as in \eqref{20240712_16:00} yields 
$$\int_{\R} \frac{\log^+|E(x)|}{1+x^2} \, \d x < \infty.$$
From Lemma \ref{theorem-krein} we see that $E$ has bounded type in $\C^+$ (condition (C1)), and moreover, since $H = \mc{H}(E_{\nu})$ as sets, with the aid of Lemma \ref{Lem_9_VET} we conclude that $\tau(E) = \tau(K(0,\cdot)) \leq 1$. In fact, we must have $\tau(E) = 1$, since by Lemma \ref{Lem_9_VET} the space $H = \mc{H}(E_{\nu}) = \mc{H}(E)$ admits only functions $F$ with $\tau(F) \leq \tau(E)$, and we know from the start that there exist functions in $F \in \mc{H}(E_{\nu})$ with exponential type equal to $1$. Finally, condition (C4) follows from Lemma \ref{nonnegl1-tohe} applied to $E_{\nu}$ or $E$, depending if we start with the left-hand side or the right-hand side of \eqref{20240712_16:29} being finite, and the fact that $H$ is isometric to $\mc{H}(E)$.


\begin{thebibliography}{99}


\bibitem{AB}
W. O. Amrein and A. M. Berthier,
\newblock On  support  properties of $L^p$-functions and their  Fourier transforms,
\newblock J. Funct. Anal. 24 (1977), no. 3, 258--267.


\bibitem{Bernard}
D. Bernard, 
\newblock Small first zeros of $L$-functions,
\newblock Monatsh. Math. 176 (2015), no. 3, 359--411. 

\bibitem{BCK} 
J. Bourgain, L. Clozel and J. -P. Kahane,  
\newblock Principe d'Heisenberg et fonctions positives,
\newblock Ann. Inst. Fourier (Grenoble) 60 (2010), no. 4, 1215--1232.

\bibitem{B2} 
L. de Branges,
\newblock Homogeneous and periodic spaces of entire functions,
\newblock Duke Math. Journal 29 (1962), no. 2, 203--224.

\bibitem{Branges} 
L. de Branges,
\newblock {\it Hilbert spaces of entire functions},
\newblock Prentice-Hall, 1968.

\bibitem{CCLM}
E. Carneiro, V. Chandee, F. Littmann and M. B. Milinovich,
\newblock Hilbert spaces and the pair correlation of zeros of the Riemann zeta-function, 
\newblock  J. Reine Angew. Math. 725 (2017), 143--182.

\bibitem{CChiM}
E. Carneiro, A. Chirre, and M. B. Milinovich, 
\newblock Hilbert spaces and low-lying zeros of $L$-functions,
\newblock Adv. Math. 410 (2022), part B, Paper No. 108748, 48 pp.

\bibitem{CGOOORS}
E. Carneiro, C. González-Riquelme, L. Oliveira, A. Olivo, S. Ombrosi, A. P. Ramos and M. Sousa,
\newblock Sharp embeddings between weighted Paley-Wiener spaces,
\newblock preprint at https://arxiv.org/abs/2304.06442.

\bibitem{CL1} 
E. Carneiro and F. Littmann, 
\newblock Extremal functions in de Branges and Euclidean spaces,
\newblock Adv. Math. 260 (2014), 281--349.

\bibitem{CQE}
E. Carneiro and E. Quesada-Herrera, 
\newblock Generalized sign Fourier uncertainty,
\newblock Ann. Sc. Norm. Super. Pisa Cl. Sci. (5) 24 (2023), no. 3, 1671–1704.

\bibitem{CG} 
H. Cohn and F. Gonçalves, 
\newblock An optimal uncertainty principle in twelve dimensions via modular forms,
\newblock Invent. Math. 217 (2019), no. 3, 799--831. 

\bibitem{DPR}
S. Drappeau, K. Pratt and M. Radziwi\l\l, 
\newblock  One-level density estimates for Dirichlet $L$-functions with extended support,
\newblock Algebra Number Theory 17 (2023), no. 4, 805--830.

\bibitem{FI}
E. Fouvry and H. Iwaniec, 
\newblock Low-lying zeros of dihedral $L$-functions,
\newblock Duke Math. J. 116 (2003), no. 2, 189--217.

\bibitem{GOR2} 
F. Gon\c{c}alves, D. Oliveira e Silva and J. P. G. Ramos, 
\newblock On regularity and mass concentration phenomena for the sign uncertainty principle,
\newblock  J. Geom. Anal. 31 (2021), no. 6, 6080--6101.

\bibitem{GOR1} 
F. Gon\c{c}alves, D. Oliveira e Silva and J. P. G. Ramos, 
\newblock New sign uncertainty principles,
\newblock Discrete Anal. (2023), Paper No. 9, 46 pp.

\bibitem{GOS} 
F. Gon\c{c}alves, D. Oliveira e Silva and S. Steinerberger, 
\newblock Hermite polynomials, linear flows on the torus, and an uncertainty principle for roots,
\newblock J. Math. Anal. Appl. 451 (2017), no. 2, 678--711.

\bibitem{GIT}
D. Gorbachev, V. Ivanov and S. Tikhonov,
\newblock Uncertainty principles for eventually constant sign bandlimited functions,
\newblock SIAM J. Math. Anal. 52 (2020), no. 5, 4751--4782.

\bibitem{HV} 
J.~Holt and J.~D.~Vaaler,
\newblock The Beurling-Selberg extremal functions for a ball in Euclidean space,
\newblock Duke Math. Journal 83 (1996), no. 1, 203--248.

\bibitem{HR} 
C. P. Hughes and Z. Rudnick, 
\newblock Linear statistics of low-lying zeros of $L$-functions,
\newblock Q. J. Math. 54 (2003), no. 3, 309--333.

\bibitem{ILS}
H. Iwaniec, W. Luo and P. Sarnak,
\newblock Low-lying zeros of families of $L$-functions,
\newblock Publ. Math. Inst. Hautes \'{E}tudes Sci. 91 (2000), 55--131.

\bibitem{KS1} 
N. M. Katz and P. Sarnak, 
\newblock Zeroes of zeta functions and symmetry, 
\newblock Bull. Amer. Math. Soc. (N.S.) 36 (1999), no. 1, 1--26. 

\bibitem{KS2} 
N. M. Katz and P. Sarnak, 
\newblock {\it Random matrices, Frobenius eigenvalues, and monodromy},
\newblock American Mathematical Society Colloquium Publications, 45. American Mathematical Society, Providence, RI, 1999. xii+419 pp. 

\bibitem{K} 
M. G. Krein,
\newblock A contribution to the theory of entire functions of exponential type,
\newblock Bull. Acad. Sci. URSS. Ser. Math. 11 (1947), 309--326.

\bibitem{Logan1}
B. F. Logan,
\newblock Extremal problems for positive-definite bandlimited functions. I. Eventually positive functions with zero integral,
\newblock SIAM J. Math. Anal. 14 (1983), no. 2, 249--252.

\bibitem{Logan2}
B. F. Logan,
\newblock Extremal problems for positive-definite bandlimited functions. II. Eventually negative functions,
\newblock SIAM J. Math. Anal. 14 (1983), no. 2, 253--257.

\bibitem{N}
F. L. Nazarov,
\newblock Local  estimates  for  exponential  polynomials  and  their  applications  to  inequalities  of  the uncertainty principle type (Russian),
\newblock Algebra i Analiz 5 (1993) 3--66; translation in St. Petersburg Math. J. 5 (1994) 663--717.

\bibitem{OS}
A. \"{O}zl\"{u}k and C. Snyder, 
\newblock On the distribution of the nontrivial zeros of quadratic $L$-functions close to the real axis,
\newblock Acta Arith. 91 (1999), no. 3, 209--228.

\bibitem{PS}
G. P\'{o}lya and G. Szeg\"{o}, 
\newblock {\it Problems and theorems in analysis II}, 
\newblock Classics in Mathematics, Springer-Verlag, 1998; Translated
from German by C. E. Billigheimer; Reprint of the 1976 English translation.


\bibitem{SW}
E. M. Stein and G. Weiss,
\newblock {\it Fourier analysis on Euclidean spaces},
\newblock Princeton Univ. Press, 1971.

\bibitem{Vi}
M. Viazovska,
\newblock The sphere packing problem in dimension 8,
\newblock Ann. Math. 185 (2017), 991--1015.

\end{thebibliography}
\end{document}